\documentclass[12pt,a4paper]{article}
\usepackage{amsmath,amscd,amssymb,amsthm,amstext,mathrsfs,dcolumn,
booktabs,enumerate}

\usepackage{geometry}
\geometry{margin=3cm}
\hfuzz=4pt

\usepackage{ifpdf}
\ifpdf
\usepackage[all,line,cmtip]{xy}      
\else
\usepackage[all,line,cmtip,dvips]{xy}
\fi

\CompileMatrices

\usepackage[final]{showkeys}

\input{xypic}

\theoremstyle{plain}
   \newtheorem{theorem}{Theorem}[section]
   \newtheorem*{theorem*}{Theorem}
   \newtheorem{proposition}[theorem]{Proposition}
   \newtheorem{lemma}[theorem]{Lemma}
   \newtheorem{corollary}[theorem]{Corollary}
   \theoremstyle{definition}
   
   \newtheorem{definition}[theorem]{Definition}
   
   \theoremstyle{remark}
   
   \newtheorem{remark}[theorem]{Remark}

\newcommand{\ZZ}{{\mathbb Z}}
\newcommand{\FF}{{\mathbb F}}

\newcommand{\Cpos}{C^{+}}
\newcommand{\Cneg}{C^{-}}
\newcommand{\Cper}{C^{per}}
\newcommand{\D}{{\partial^-}}
\newcommand{\odelta}{{\overline \delta}}
\newcommand{\oq}{{\overline q}}
\newcommand{\ophi}{{\overline \phi}}
\newcommand{\ou}{{\overline u}}
\newcommand{\tensor}{{\otimes}}
\newcommand{\multneg}{{\mu^-}}

\newcommand{\ellper}{{\ell^{per}}}
\newcommand{\ellpos}{{\ell^+}}

\newcommand{\drca}{{\Omega_{A|k}^*}}
\newcommand{\ella}{{\ell (A)}}
\newcommand{\ellposa}{{\ell^+ (A)}}
\newcommand{\ellpera}{{\ell^{per} (A)}}
\newcommand{\Lfa}{{{\mathcal L}(A)}}
\newcommand{\Bfa}{{{\mathcal B}(A)}}
\newcommand{\vnulla}{{V_0(A)}}

\DeclareMathOperator{\image}{Im}
\DeclareMathOperator{\Ker}{Ker}

\DeclareMathOperator{\sgn}{sgn}
\DeclareMathOperator{\Hom}{Hom}
\DeclareMathOperator{\Map}{\textbf{Map}}

\newcommand{\scat}{{\Delta}}
\newcommand{\Alg}{{\textbf{Alg}}}
\newcommand{\Commalg}{{\textbf{Commalg}}}
\newcommand{\Mod}{{\textbf{Mod}}}
\newcommand{\Set}{{\textbf{Set}}}
\newcommand{\olb}{{\overline B}}

\newcommand{\Commalgnu}{{\textbf{Commalg}_e}}
\newcommand{\ellnu}{{\ell_e}}
\newcommand{\ellpernu}{{\ell_e^{per}}}

\title{A Hochschild-Kostant-Rosenberg theorem for cyclic homology}
\author{Marcel B\"okstedt \& Iver Ottosen}

\begin{document}
\maketitle

\begin{center}
Department of Mathematical Sciences, Aarhus University, \\
Ny Munkegade 118, 8000 Aarhus C, Denmark. \\
marcel@math.au.dk
\end{center}

\begin{center}
Department of Mathematical Sciences, Aalborg University, \\
A.C. Meyers V\ae nge 15, 2450 K\o benhavn SV, Denmark. \\
ottosen@math.aau.dk
\end{center}

\begin{abstract}
Let $A$ be a commutative algebra over the field $\FF_2= \ZZ / 2$. 
We show that there is a natural algebra homomorphism $\ell (A) \to HC^-_*(A)$ which is an isomorphism when 
$A$ is a smooth algebra. Thus, the functor $\ell$ can be viewed as an approximation of negative cyclic homology 
and ordinary cyclic homology $HC_*(A)$ is a natural $\ell (A)$-module. In general, there is a spectral sequence 
$E^2 = L_*(\ell )(A) \Rightarrow HC_*^- (A)$. We find associated approximation functors 
$\ellpos$ and $\ellper$ for ordinary cyclic homology and periodic cyclic homology, and
set up their spectral sequences. Finally, we discuss universality of the approximations.

\begin{flushleft}MSC: 19D55; 18G50\end{flushleft}
\end{abstract}

\section{Introduction}
\label{sec:Intro}

In this paper we continue the study of the $\ell$-functor introduced in \cite{BO}. This is a functor 
from the category of $\FF_2$-algebras with extra structure to the category of $\FF_2$-algebras, which is defined by
generators and relations. The functor was invented to give information about the cohomology of the free loop space $LX$. It comes with a natural transformation
\[ 
\ell(H^*(X; \FF_2)) \to H^*_{S^1}(LX; \FF_2) := H^*(ES^1\times_{S^1} LX; \FF_2)
\]
and it is further related to the equivariant cohomology of the free loop space by a spectral 
sequence of the form
\[ E_2^{-m,t}= H_m(H^*(X; \FF / 2) ; \ell )^t \Rightarrow H^*_{S^1}(LX; \FF_2) \]  
when $X$ is simply connected (\cite{BO_Topology} Theorem 7.4). 
Here $H_m(-; \ell )$ denotes the $m$th non-abelian left derived functor of $\ell$. 
In some cases one can prove that this spectral sequence collapses. An additional advantage of this functor is that it can easily 
be lifted to an endofunctor on the category of unstable algebras over the Steenrod algebra.

There is also a version of the $\ell$-functor at odd primes, but we feel that our understanding of the odd case is inferior to
our understanding of the functor in characteristic 2.

There is a similar but different functor that has been studied with the same purpose in mind, namely negative cyclic homology.
In an abstract sense, this functor completely describes the cohomology of the free loop space, since there is a
chain homotopy equivalence between the group of cochains $C^*(ES^1\times_{S^1} LX)$ and the negative cyclic homology of the ring of cochains $C^*(X)$. The drawback of this isomorphism  is that the ring of cochains is often hard to work with, especially in positive characteristic. But in the special situation that  the cochain complex of the space $X$ is formal, negative cyclic homology gives the complete results.

As far as we are aware, there is no spectral sequence relating negative cyclic homology of $H^*(X,R)$ to the ring 
$H^*_{S^1}(LX;R)$. Also, there does not appear to exist a natural map 
$HC_*^-(H^*(X;\FF_2)) \to H^*_{S^1}(LX, \FF_2)$. 
Finally, it is difficult to incorporate the action of the Steenrod algebra into the definition of cyclic homology over $\FF_2$.

To sum up, the situation is that in characteristic 2 we have two distinct functors, each  with  some strong and some weak points, which both aim at describing the equivariant cohomology of $LX$. The purpose of this paper is to relate the two functors to each other, in the hope that they can do a better job working together than either of them manages to do by themselves.

The cohomology of the free loop space is on our minds at all times. However, in this paper we discuss the purely algebraic relation between negative cyclic homology and the $\ell$-functor. The point of departure is the Hochshild-Kostant-Rosenberg theorem, which states that for a smooth algebra $A$, there is an explicit isomorphism between the Hochschild homology $HH_*(A)$ and the algebraic de Rham complex on $A$. Our point of view is that all flavors of cyclic homology are closely related to Hochschild homology, and the $\ell$-functor is equally closely related to the algebraic de Rham complex. Without making a precise statement, we note the analogy to the relation of the equivariant cohomology $H^*_{S^1}(LX)$ to the non-equivariant cohomology $H^*(LX)$.

Our main result is that for a unital, augmented, smooth algebra $A$ satisfying appropriate finiteness conditions, there is an isomorphism of rings $\ell(A)\to HC^-_* (A)$. This is a result about the product structures, because it follows from well known results on $HC^-_* (A)$ that there is a vector space isomorphism.
In order to prove this result, we construct a natural transformation
$\ell(A) \to HC^-_*(A)$. We think of this map as a purely  algebraic analogue
of the map $\ell(H^*(X; \FF_2)) \to H^*_{S^1}(LX; \FF_2)$.

There is a way of defining negative cyclic homology for simplicial rings \cite{Goodwillie}. It is known that $HC^- (A)$ is a homotopy invariant functor of $A$ (\cite{Goodwillie} I.3.5. with $\FF_2$-coefficients instead of $\ZZ$). That is, if we replace $A$ by a homotopy equivalent simplicial ring $A_\bullet$, the negative cyclic homologies of $A$ and $A_\bullet$ will agree. Furthermore, it is known that for any $\FF_2$-algebra, we can find a homotopy equivalent simplicial algebra $A_\bullet$ such that $A_n$ is a polynomial algebra for every $n$. By filtering the chain complex for $HC^-_*(A_\bullet)$ after the simplicial
direction, one gets a spectral sequence with $E^1=HC^-_*(A_n)$, which is similar to Quillen's fundamental spectral sequence converging towards Hochschild homology. This spectral sequence has a multiplicative structure. Since polynomial algebras are smooth, our main result applies to give $E^1\cong \ell(A_n)$, as a ring, and we see that this is the same $E_1$-term as in the spectral sequence converging towards $H^*_{S^1}(LX)$. Therefore, we think of the fundamental spectral sequence as its algebraic analogue. We do not know the relation between these two spectral sequences.  

To sum up the situation as we understand it presently, $\ell$ is related to the cohomolology of free loop spaces and to negative cyclic homology in very similar fashions. In both cases we have a natural map, and a spectral sequence. The relation between $HC_*^- (H^*(X))$ and $H^*_{S^1}(LX)$ seems less clear. However, if $H^*(X)$ is a polynomial algebra, both natural maps are isomorphisms (for the free loop space case, see \cite{BO}).

The definition of negative cyclic homology works with equal ease in all characteristics, and even relative to a base ring. In contrast, at the moment we have only defined $\ell$ for algebras over $\FF_2$. We intend to use the connection to cyclic homology to work out the correct generalization of $\ell$ in future work. This is particularly important in odd characteristic.

\section{Cyclic homology theories}
\label{sec:CHT}
We briefly introduce Hochschild-, cyclic-, negative cyclic- and periodic homology of algebras in this section together with
their product structures in the commutative case. References are \cite{Goodwillie}, \cite{Loday} and \cite{NEH}.

Let $k$ be a commutative ring and let $A$ be an associative and unital $k$-algebra. Put $\overline A = A/k$ 
and define 
\[ C_n(A)=
\begin{cases}
A\otimes {\overline A}^{\otimes n} , & n\geq 0, \\
0, & n<0.
\end{cases} \]
Write $(a_0, a_1, \dots , a_n)$ for the element $a_0\otimes a_1\otimes \dots \otimes a_n$.
The Hochschild boundary map $b:C_n(A) \to C_{n-1}(A)$ is defined by
\[ b(a_0, \dots , a_n) = \sum_{i=0}^{n-1} (-1)^i (a_0, \dots , a_i\cdot a_{i+1}, \dots ,a_n) + 
(-1)^n (a_n\cdot a_0, \dots , a_{n-1}). \]
It satisfies $b^2 =0$ such that we have a chain complex $C_* (A)$. The Hochschild homology of $A$ is the
homology of this complex $HH_n (A) = H_n(C_*(A))$.
 
There is also a Connes' boundary map $B:C_n(A) \to C_{n+1}(A)$ defined by
\[ B(a_0, \dots , a_n) = \sum_{i=0}^n (-1)^{ni} (1, a_i , \dots , a_n, a_0, \dots , a_{i-1}). \]
One has $B^2=0$ and $bB+Bb=0$.
 Let $\olb_{**}(A)$ denote the $\ZZ \times \ZZ$-graded bicomplex with
\[ \olb_{p,q} (A) = C_{q-p} (A) \]
and boundary maps given by 
\[ B: \olb_{p,q} (A) \to \olb_{p-1,q} (A) , \quad b: \olb_{p,q} (A) \to \olb_{p,q-1} (A). \]
This bicomplex is periodic. It is a module over the polynomial algebra $k[u]$ where $u$ has bidegree $(-1,-1)$. The 
generator acts by sending $x\in \olb_{p,q} (A) = C_{q-p}(A)$ to $ux=x\in \olb_{p-1,q-1}(A) = C_{q-p}(A)$.

For $-\infty \leq \alpha \leq \beta \leq +\infty$ we let $T^{\alpha , \beta}_* (A)$ denote the $\ZZ$-graded 
chain complex with
\[  T^{\alpha , \beta }_n (A) = \prod_{\alpha \leq p \leq \beta } \olb_{p,n-p} (A) \]
and boundary map $B+b$. Note that we use the direct product here. If $-\infty < \alpha$ only finitely many 
non-zero factors appear such that the direct product can be replaced by a direct sum.

\begin{definition}
\label{def:homologies}
Hochschild-, cyclic-, negative cyclic and periodic homology of $A$ are defined as follows:
\begin{center}
\begin{tabular}{l l}
$HH_*(A)= H_*(T^{0,0}_*(A)), \quad $ & $HC_*(A)=H_*(T^{0, \infty}_*(A)),$ \\
$HC^-_* (A)=H_*(T^{-\infty ,0}_*(A)),\quad $ & $HC^{per}_*(A)=H_*(T^{-\infty ,\infty }_*(A)).$
\end{tabular}
\end{center}
\end{definition}

It is sometimes convenient to use a different notation as follows:
\begin{align*}
& \Cpos_* (A) = k[u,u^{-1}]/uk[u] \otimes C_*(A) \cong T_*^{0,\infty}(A), \\
& \Cneg_* (A) = k[u] \widehat \otimes C_*(A) \cong T_*^{-\infty , 0}(A), \\
& \Cper_* (A) = k[u,u^{-1}]\widehat \otimes C_*(A) \cong T_*^{-\infty, \infty}(A),
\end{align*}
Here $u$ and $u^{-1}$ have lower degrees $-2$ and $2$ respectively and $\widehat \otimes$ denotes
the completion of the graded tensor product $\otimes$. The isomorphisms are given by
\[ u^i \otimes c_{n-2i} \mapsto c_{n-2i}\in \olb_{i,n-i}(A), \]
and the differentials on the chain complexes $\partial^+$, $\partial^-$, $\partial^{per}$ by
\[ id \otimes b+ u\cdot (id\otimes B). \]

If $A$ is a {\em commutative} $k$-algebra there are product structures which we now describe. 
The product on Hochschild homology comes from the shuffle map (\cite{Loday} Section 4.2).
The symmetric group $S(n)$ acts from the left on $C_n (A)$ by
\[ \sigma \cdot (a_0, a_1, \dots , a_n) = 
(a_0 , a_{\sigma^{-1} (1)} , a_{\sigma^{-1}(2)} ,\dots , a_{\sigma^{-1} (n)} ) . \]
Let $S(p,q)\subseteq S(p+q)$ denote the set of $(p,q)$-shuffles.
The shuffle map 
\[ sh: C_*(A) \otimes C_*(A) \to C_* (A\otimes A) \]
is a chain map defined by
\begin{align*}
sh((a_0, a_1, \dots , a_p) & \otimes (b_0, b_1, \dots , b_q) ) = \\
& \sum_{\tau \in S(p,q)} \sgn (\tau ) \tau \cdot 
(a_0\otimes b_0, a_1\otimes 1 , \dots , a_p\otimes 1, 1\otimes b_1 , \dots ,1\otimes b_q ). 
\end{align*}
For a commutative algebras $A$ with multiplication $m_A$ the composite map
\[
\xymatrix@C=1.5 cm{
C_* (A) \otimes C_*(A)  \ar[r]^-{sh} & C_* (A\otimes A) \ar[r]^-{C_*(m_A)} & C_* (A)
}\] 
induces the product on Hochschild homology $HH_*(A)\otimes HH_*(A) \to HH_*(A)$.

The product on negative cyclic homology comes from the shuffle map and a cyclic shuffle map (\cite{Loday} Section 4.3).
A cyclic $(p,q)$-shuffle is a permutation $\sigma \in S(p+q)$ obtained as follows:
First perform a cyclic permutation of any order on the set $\{1 , \dots , p \}$ and a cyclic permutation of any 
order on the set $\{ p+1, \dots , p+q\}$. Then shuffle the two results to obtain $\{ \sigma (1), \dots , \sigma (p+q) \}$.
If $1$ appears before $p+1$, then $\sigma$ is a cyclic $(p,q)$-shuffle. Let $CS(p,q)$ denote the set of cyclic $(p,q)$-shuffles.

The cyclic shuffle map
\[ sh^\prime : C_* (A) \otimes C_* (A) \to C_{*+2} (A\otimes A) \]
is given by
\begin{align*}
sh^\prime ((a_0, a_1, & \dots , a_p ) \otimes (b_0, b_1, \dots , b_q )) = \\
& \sum_{\sigma \in CS(p+1,q+1)} \sgn (\sigma ) \sigma^{-1} \cdot (1\otimes 1, a_0\otimes 1, \dots , 
a_p\otimes 1, 1\otimes b_0, \dots , 1\otimes b_q ).
\end{align*}
Note that one has to use $\sigma^{-1}$ here as opposed to $\sigma$ which appears in \cite{Loday}. See 
\cite{KR} page 45 regarding this.

By combining the shuffle and cyclic shuffle maps one gets a chain map as follows:
\[ Sh = id\otimes sh+u\cdot (id\otimes sh^\prime ) : 
k[u]\widehat \otimes C_* (A)\otimes C_*(A) \to k[u] \widehat \otimes C_* (A\otimes A). \]
When $A$ is commutative, we can define $\multneg : \Cneg_*(A) \otimes \Cneg_*(A) \to \Cneg_*(A)$ as the following composite \cite{NEH} 1.12:
\[
\xymatrix@C=1.2 cm{
k[u]\widehat \otimes C_* (A)\otimes k[u]\widehat \otimes C_*(A) \ar[r]^-{id\otimes tw \otimes id} &  
k[u]\widehat \otimes k[u] \widehat \otimes C_*(A) \otimes C_*(A) \ar[r]^-{m_{k[u]}\otimes id}  & \\
k[u]\widehat \otimes C_*(A)\otimes C_*(A) \ar[r]^-{Sh} & 
k[u]\widehat \otimes C_*(A\otimes A) \ar[r]^-{id\otimes C_*(m_A)}  &
k[u]\widehat \otimes C_*(A)
}\] 
This chain map induces the product on negative cyclic homology
\[ HC^-_*(A) \otimes HC^-_*(A) \to HC^-_*(A). \]

For periodic cyclic and cyclic homology one can write up similar composite maps $\mu^{per}$ and $\mu^+$
which induce a product and a module structure as follows
\cite{Loday} 5.1.13: 
\begin{align*} 
& HC_*^{per}(A)\otimes HC_*^{per}(A) \to HC_*^{per}(A), \\
& HC_*^{-}(A)\otimes HC_*(A) \to HC_*(A).
\end{align*}

\begin{remark}
\label{product_formula}
By the description above we have the following multiplication formula which works in all three cases:
\begin{align*}
u^i\otimes (a_0, a_1, \dots ,a_p) \otimes u^j\otimes & (b_0, b_1, \dots ,b_q) \mapsto \\ 
u^{i+j} & \otimes \sum_{\tau \in S(p,q)} \sgn (\tau ) \tau \cdot (a_0b_0, a_1, \dots , a_p, b_1, \dots , b_q) \\
+u^{i+j+1} & \otimes \sum_{\sigma \in CS(p+1,q+1)} \sgn (\sigma ) \sigma^{-1} \cdot 
(1, a_0, \dots, a_p, b_0, \dots , b_q).
\end{align*}
Note that all the terms of the second summation are zero if $a_0=1$ or $b_0=1$.
\end{remark}

The short exact sequence $0\to T_*^{-\infty , -1} \to T_*^{-\infty , 0} \to T_*^{0,0} \to 0$ of chain complexes
gives us the long exact sequence for negative cyclic homology.
\[
\xymatrix@C=1.0 cm{
\dots \ar[r] & HC^-_{*+2}(A) \ar[r]^-{\cdot u} & HC^-_* (A) \ar[r]^-{h} & HH_* (A) \ar[r]^-{\partial} 
& HC^-_{*+1} (A) \ar[r] & \dots  
}\] 
Here the map $h$ is an algebra homomorphism. 

Likewise the short exact sequence $0\to T_*^{0,0} \to T_*^{0,\infty } \to T_*^{1,\infty } \to 0$ gives us 
Connes' long exact sequence
\[
\xymatrix@C=1.0 cm{
\dots \ar[r] & HH_* (A) \ar[r]^-{I} & HC_* (A) \ar[r]^-{\cdot u} & HC_{*-2} (A) \ar[r]^-{\partial} 
& HH_{*-1} (A) \ar[r] & \dots  
}\] 
The short exact sequence $0\to T_*^{-\infty , 0} \to T_*^{-\infty , \infty } \to T_*^{1, \infty } \to 0$ gives us 
the long exact sequence for periodic cyclic homology.
\[
\xymatrix@C=1.0 cm{
\dots \ar[r] & HC^-_* (A) \ar[r]^-{\iota} & HC^{per}_* (A) \ar[r]^-{S} & HC_{*-2} (A) \ar[r]^-{\partial} 
& HC^-_{*-1} (A) \ar[r] & \dots  
}\] 
Here $\iota$ is an algebra homomorphism.

We view $HH_*(A)$ and $HC_*^{per}(A)$ as $HC_*^-(A)$-modules via the algebra homomorphisms $h$ and $\iota$.
One has the following result:

\begin{proposition}
\label{prop:HC-mod}
The three long exact sequences above are long exact sequences of $HC_*^- (A)$-modules.
\end{proposition}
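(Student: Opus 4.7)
The strategy is to show that each of the three short exact sequences of chain complexes is in fact a short exact sequence of differential graded modules over the DG algebra $(\Cneg_*(A),\partial^-,\multneg)$; the conclusion then follows from the standard principle that a short exact sequence of DG $R$-modules yields a long exact sequence of $H_*(R)$-modules, in which every map, including the connecting homomorphism, is module-linear.

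First I would observe that the multiplication formula of Remark~\ref{product_formula} makes $\Cper_*(A)$ a DG algebra containing $\Cneg_*(A)$ as a DG subalgebra, and that the filtration of $\Cper_*(A)$ by $u$-adic valuation is multiplicative: the product of an element of $u$-valuation $\geq p$ with one of $u$-valuation $\geq p'$ has $u$-valuation $\geq p+p'$, as one reads off directly from the formula (both the shuffle and the cyclic shuffle summand preserve this property). Consequently $T^{-\infty,-1}_*\cong u\Cneg_*(A)$ is a DG ideal in $T^{-\infty,0}_*\cong\Cneg_*(A)$, with quotient $T^{0,0}_*\cong\Cneg_*(A)/u\Cneg_*(A) = C_*(A)$; $T^{0,0}_*$ embeds into $T^{0,\infty}_*\cong\Cpos_*(A)$ as a DG $\Cneg_*(A)$-submodule with quotient $T^{1,\infty}_*$; and $T^{-\infty,0}_*$ embeds into $T^{-\infty,\infty}_*\cong\Cper_*(A)$ as a DG $\Cneg_*(A)$-submodule with the same quotient $T^{1,\infty}_*$. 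Thus all three short exact sequences
\begin{align*}
& 0\to T^{-\infty,-1}_* \to T^{-\infty,0}_* \to T^{0,0}_* \to 0, \\
& 0\to T^{0,0}_* \to T^{0,\infty}_* \to T^{1,\infty}_* \to 0, \\
& 0\to T^{-\infty,0}_* \to T^{-\infty,\infty}_* \to T^{1,\infty}_* \to 0
\end{align*}
are short exact sequences of DG $\Cneg_*(A)$-modules, and inspection of the formula shows that the $HC^-_*(A)$-module structures on $HH_*(A)$, $HC_*(A)$, and $HC^{per}_*(A)$ induced by these chain-level actions coincide with the structures described in the excerpt via $h$, $\mu^+$, and $\iota$.

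It remains to invoke the general fact: for any DG algebra $R$ and any short exact sequence $0\to K\to L\to M\to 0$ of DG $R$-modules, the associated long exact homology sequence is a long exact sequence of $H_*(R)$-modules. Module-linearity of the maps induced by the inclusion and projection is immediate; for the connecting homomorphism $\partial$ one uses the standard diagram chase: if $r\in R$ is a cycle and $[m]\in H_n(M)$ is represented by $m$ with lift $\tilde m\in L$, then $r\tilde m$ lifts $rm$, and since $dr=0$ one has $d(r\tilde m)=r\,d\tilde m$ (the sign is irrelevant over $\FF_2$), so $\partial(r[m])=[r\,d\tilde m]=r\,\partial[m]$. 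Applying this to the three short exact sequences above gives all three assertions simultaneously.

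The main technical hurdle is simply the bookkeeping needed to verify that the product formula of Remark~\ref{product_formula} really does preserve the $u$-adic filtration and consequently restricts and descends to well-defined DG $\Cneg_*(A)$-module structures on each of the five subquotients; this is routine but deserves care because of the two distinct summands (shuffle and cyclic shuffle) in the product.
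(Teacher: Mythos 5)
Your proposal is correct and follows essentially the same route as the paper: chain-level actions of $\Cneg_*(A)$ compatible with the maps in the three short exact sequences (verified from the multiplication formula of Remark~\ref{product_formula}), followed by the observation that the snake-lemma diagram chase makes the connecting homomorphism module-linear. One small caveat: $(\Cneg_*(A),\partial^-,\multneg)$ is only an $A_\infty$-algebra (homotopy associative), not a strict DG algebra as you assert, but this does not affect your argument, since the module-linearity of the long exact sequence uses only that the actions are chain maps compatible with the inclusions and projections, never chain-level associativity.
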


\begin{proof}
The proposition is a consequence of a more general result which we first describe. Let $R_*$ be a chain complex
of $k$-modules equipped with at chain map $\mu : R_*\otimes R_* \to R_*$ which induces a product in homology
\[ H_*(R) \otimes H_*(R) \to H_*(R\otimes R) \to H_*(R) \]
such that $H_*(R)$ becomes a graded $k$-algebra.
Here the map to the left is the canonical map given by $[c]\otimes [c^\prime ] \mapsto [c\otimes c^\prime ]$
and the map to the right is induced by $\mu$. Let
\[
\xymatrix@C=1.0 cm{
0 \ar[r] & L_* \ar[r]^-{i} & M_* \ar[r]^-{p} & N_* \ar[r] & 0  
}\]
be a short exact sequence of chain complexes of $k$-modules. 

Assume that we have chain maps
$\alpha : R_*\otimes L_* \to L_*$, $\beta: R_*\otimes M_* \to M_*$ and $\gamma : R_*\otimes N_* \to N_*$
which induces $H_*(R)$-module structures on $H_*(L)$, $H_*(M)$ and $H_*(N)$ when 
the canonical maps are composed by the induced maps of these chain maps. Assume furthermore, that
we have commutative diagrams as follows:
\[
\xymatrix@C=1.0 cm{
R_* \otimes L_*  \ar[r]^-{\alpha} \ar[d]^-{id\otimes i} & L_* \ar[d]^- {i} 
& R_* \otimes M_* \ar[r]^-{\beta} \ar[d]^-{id\otimes p} & M_* \ar[d]^-{p} \\
R_* \otimes M_* \ar[r]^-{\beta} & M_* 
&  R_* \otimes N_* \ar[r]^-{\gamma} & N_*
}\]

Then the long exact sequence of homology groups
\[
\xymatrix@C=1.0 cm{
\dots \ar[r] & H_*(L) \ar[r]^-{i_*} & H_*(M) \ar[r]^-{p_*} & H_*(N) \ar[r]^-{\partial_*} & H_{*-1}(L) \ar[r] & \dots  
}\] 
is a long exact sequence of $H_*(R)$-modules. 

By applying the homology functor to the two commutative diagrams one gets that $i_*$ and $p_*$ are $H_*(R)$-linear. 
So one only has to verify that the connecting homomorphism $\partial_*$ is $H_*(R)$-linear:
\[ \partial_*( r \cdot n) = (-1)^{|r|} r\cdot \partial_* (n), \quad r \in H_*(R), \quad n \in H_*(N). \]
This follows however by the diagram chase description of $\partial_*$ given in the snake lemma.

Let $R_* = C_*^- (A)$ with $\mu = \mu^-$ (note that $C_*^- (A)$ has the structure of a standard 
$A_\infty$-algebra by \cite{Ge-J} Theorem 4.4). 

Using the alternative notation the first short exact sequence, which we consider, takes the form
\[
\xymatrix@C=1.0 cm{
0 \ar[r] & C_{*+2}^- (A)  \ar[r]^-{\cdot u} & C_*^- (A) \ar[r]^-{h} & C_*(A) \ar[r] & 0  
}\] 
where  $h(u^i \otimes \underline a) = 1\otimes \underline a$ for $i=0$ and $h(u^i \otimes \underline a) = 0$ 
for $i>0$ and $\underline a \in C_*(A)$. Let $L_* = C_{*-2}^- (A)$ and
$M_* = C_*^- (A)$ with chain maps $\alpha$ and $\beta$ given by $\mu^-$. Put $N_* = C_*(A)$ and 
let $\gamma$ be the composite
\[
\xymatrix@C=1.0 cm{
C(A)_*^- \otimes C_*(A)  \ar[r]^-{id \otimes h} & C_*(A)\otimes C_*(A) \ar[r] & C_*(A) 
} \] 
where the chain map to the right is the shuffle map. Finally, let $i=\cdot u$ and $p=h$. 
By Remark \ref{product_formula} one sees that there are commutative diagrams as required and the result follows.

The second short exact sequence takes the form
\[
\xymatrix@C=1.0 cm{
0 \ar[r] & C_*(A)  \ar[r]^-{I} & C_*^+ (A) \ar[r]^-{\cdot u} & C_{*-2}^+ (A) \ar[r] & 0  
}\] 
where $I$ is the natural subcomplex inclusion. In this case $\beta$ and $\gamma$ both equals the
chain map $\mu^+$. Remark \ref{product_formula} gives us that the required diagrams commute 
such that we have the desired result.

The third short exact sequence takes the form
\[
\xymatrix@C=1.0 cm{
0 \ar[r] & C_*^- (A)  \ar[r]^-{\iota} & C_*^{per} (A) \ar[r]^-{S} & C_{*-2}^+ (A) \ar[r] & 0  
}\] 
where $\iota$ is the natural inclusion and $S(u^i \otimes \underline a) = u^{i+1} \otimes \underline a$ for $i\leq -1$
and $S(u^i \otimes \underline a) = 0$ for $i\geq 0$. In this case $\beta = \mu^{per} \circ (\iota \otimes id)$. Remark \ref{product_formula}
gives us that the required diagrams commute and we have the desired result.
\end{proof}

Hochschild and cyclic homology are also defined for graded $k$-algebras. One has to introduce various signs in the
description above, but that does not concern us, since we are only interested in the case $k=\FF_2=\ZZ /(2)$.  
We use the standard convention regarding {\em upper-} and {\em lower gradings}. If $M=M^*$ is a 
$\ZZ$-graded $k$-module in upper degrees its lower grading is given by $M_{-n} = M^n$, 
$n\in \ZZ$ and vice versa.

Let $A=A^*$ be a non-negatively graded commutative $\FF_2$-algebra with unit 
$\eta : k \to A^0$. The (upper) degree of a homogeneous element $a\in A$ is denoted $|a|$.
Put $\overline A = A/\eta (k)$ and write $s\overline A$ for $\overline A$ with the degree shift given by 
$|sa| = |a|-1$ Let $C_* (A)$ denote the normalized Hochschild complex with
\[ C_n (A) = A \otimes (s\overline A)^{\otimes n}.\]
In the graded case we use the bar complex notation and write $a_0[a_1| \dots |a_n]$ 
for the element $a_0\otimes sa_1\otimes \dots \otimes sa_n$. 
We refer to $n$ as the {\em homological degree} of this element. 
As a particular case, $a_0[]$ denotes $a_0 \in A = C_0 (A)$. The descriptions of differentials 
and product structures given above remain valid.
An ungraded $k$-algebra is viewed as a graded $k$-algebra concentrated ind degree zero. For such an 
algebra, the lower degree of an element in $C_n(A)$ equals the homological degree $n$.

\section{Filtrations and spectral sequences}
\label{spectral_sequence}

Let $k$ be a commutative ring and let $A$ be an associative and unital $k$-algebra. 
Put $T_* = T_*^{\alpha , \beta}(A)$. This is a chain complex with a $k[u]$-action. 
We have a filtration by subcomplexes as follows:
\[ \dots \subseteq F_{s}T_* \subseteq F_{s+1}T_* \subseteq \dots \subseteq T_* \]
where
\[ F_sT_n = \{ x \in T_n | \medspace x_{i,n-i} = 0 \text{ for } i \geq s+1 \} .\]
Consider the associated spectral sequence $\{ E^r_{**}(A, \alpha, \beta ) \}$ of $k[u]$-modules. We have
\[ E_{s,t}^1 (A, \alpha , \beta )= H_{s+t}(F_sT_*/ F_{s-1}T_*) = \begin{cases}
HH_{t-s}(A), & \alpha \leq s\leq \beta, \\
0, & \text{otherwise}.
\end{cases} \]
When $\alpha +1\leq s \leq \beta$, the $d^1$-differential is induced by Connes' boundary
\[ d^1: E_{s,t}^1 \to E_{s-1,t}^1; \quad d^1 = B_* : HH_{t-s}(A) \to HH_{t-s+1}(A). \]
The $k[u]$-action is given by $u\cdot = id : E^1_{s,t} \to E^1_{s-1, t-1}$.

We need this spectral sequence for negative cyclic homology.
In this case the filtration is not bounded below so we must consider convergence issues. 
Here our reference is \cite{Boardman}.

\begin{proposition} 
\label{prop_spectral_sequence}
We have a conditionally convergent spectral sequence
\[ E_{*,*}^r(A, -\infty, 0) \Rightarrow HC^-_* (A). \]
If for each $s$ and $t$ only finitely many of the differentials $d^r: E_{s,t}^r \to E_{s-r, t+r-1}^r$ are 
non-zero, then the spectral sequence converges strongly. If we have strong convergence, then the composite map
\[ HC_t^- (A) \twoheadrightarrow E_{0,t}^\infty \hookrightarrow E_{0,t}^1 = HH_t(A) \]
equals the canonical map $h: HC_t^-(A) \to HH_t(A)$ for all $t$.
\end{proposition}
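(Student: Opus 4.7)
The plan is to invoke Boardman's machinery for conditionally convergent half-plane spectral sequences and then identify the edge map by inspection. Let $T_* = T^{-\infty,0}_*(A)$. I first note the key structural feature: since the product defining $T_n$ runs only over $p \le 0$, the condition ``$x_{i,n-i} = 0$ for $i \ge s+1$'' is automatic once $s \ge 0$, so $F_s T_* = T_*$ for all $s \ge 0$, while for $s \le 0$ one has $F_s T_n \cong \prod_{p \le s} \olb_{p,n-p}(A)$. In particular $\bigcap_s F_s T_n = 0$ and $F_0 T_* = T_*$, the two facts that drive the remainder of the argument.

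To obtain conditional convergence I verify the Boardman criteria $\lim_s F_s T_n = 0$ and ${\lim_s}^1 F_s T_n = 0$ (inverse limit as $s \to -\infty$). The quotient tower $\{T_n / F_s T_n\}_{s \le 0}$ is a tower of \emph{surjections} between finite products $\prod_{s+1 \le p \le 0} \olb_{p,n-p}(A)$, so it is Mittag-Leffler, and its inverse limit is canonically $T_n$ itself via the product description. Applying the derived-functor six-term sequence to $0 \to F_s T_n \to T_n \to T_n / F_s T_n \to 0$ (with $\lim^1$ of the constant tower $T_n$ zero), both $\lim$ and $\lim^1$ of $F_s T_n$ vanish. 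By \cite{Boardman} this gives a conditionally convergent spectral sequence with abutment $H_*(T_*) = HC^-_*(A)$.

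For strong convergence, the hypothesis that only finitely many outgoing differentials $d^r: E^r_{s,t} \to E^r_{s-r,t+r-1}$ are nonzero for each $(s,t)$ implies that $E^r_{s,t}$ stabilises to $E^\infty_{s,t}$ at a finite page. (Incoming differentials land in $(s,t)$ only from $(s+r, t-r+1)$ with $s+r \le 0$, so automatically only finitely many are nontrivial.) This gives the vanishing of the derived $E^\infty$-term, i.e.\ $RE_\infty = 0$, and together with conditional convergence Boardman's theorem (Theorem 7.4 of \cite{Boardman}) upgrades the spectral sequence to strong convergence.

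For the edge-map identification, since $E^1_{s,t} = 0$ for $s \ge 1$, no incoming differentials can hit $E^r_{0,t}$, so $E^\infty_{0,t} \hookrightarrow E^1_{0,t} = HH_t(A)$. From $F_0 T_* = T_*$ it follows that the induced filtration on $HC^-_t(A)$ satisfies $F_0 HC^-_t(A) = HC^-_t(A)$, so the edge epimorphism $HC^-_t(A) \twoheadrightarrow E^\infty_{0,t}$ is induced by the projection $T_* \to T_*/F_{-1}T_*$. But $T_*/F_{-1}T_*$ is canonically the quotient sending $x \in T_n$ to its $\olb_{0,n}(A) = C_n(A)$ component, which on the chain level is exactly the map $h$ from Section~\ref{sec:CHT}. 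Passing to homology yields the claimed equality with $h$. The main subtlety is the verification of $\lim^1 F_s T_* = 0$, which relies crucially on the product (rather than sum) defining $T^{-\infty,0}_*(A)$; the rest of the proof is formal bookkeeping with Boardman's theorems.
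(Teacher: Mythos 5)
Your proof is correct and follows essentially the same route as the paper: Boardman's conditional-convergence machinery, with your verification that $\lim_s F_sT_n={\lim_s}^1 F_sT_n=0$ being equivalent (via the six-term sequence) to the paper's direct computation that the completion $\widehat T_*$ coincides with $T_*$, the same finiteness-of-differentials criterion for strong convergence, and the same chain-level identification of the edge map $T_*\to T_*/F_{-1}T_*\cong C_*(A)$ with $h$. The only quibble is the citation (the paper invokes the remark following Boardman's Theorem 7.1 rather than Theorem 7.4), which does not affect the mathematics.
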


\begin{proof}
We shift to the notation used in \cite{Boardman} by defining $F^iT_*=F_{-i}T_*$. So we have a filtration 
$\dots \subseteq F^2T_* \subseteq F^1T_* \subseteq F^0T_* = T_*$ which exhaust $T_*$. 
By \cite{Boardman} Theorem 9.3 page 29, the associated spectral sequence converges conditionally 
to the homology of the completion $\widehat T_*$. This chain complex is defined as the inverse limit 
of the sequence of projection maps
\[ \dots \to T_*/F^2T_* \to T_*/F^1T_* \to T_*/F^0T_* = 0. \]
But for every $s\geq 0$ and every $n$ we have a commutative diagram with vertical isomorphisms as follows:
\[
\xymatrix@C=1.5 cm{
T_n/F^{s+1}T_n \ar[r]^-{pr} \ar[d]_-{\cong} & T_n/F^sT_n \ar[d]^-{\cong} \\ 
{\prod_{-s \leq i \leq 0} \olb_{i,n-i} (A)} \ar[r]^-{pr} & {\prod_{-s+1 \leq i \leq 0} \olb_{i,n-i} (A) }
} \] 
So $\widehat T_n = \lim_s (T_n/F^sT_n) \cong \lim_s \prod_{-s<i\leq 0} \olb_{i,n-i} (A) = T_n$.
The differential on $\widehat T_*$ corresponds to the differential on $T_*$ under this isomorphism.

By the remark following \cite{Boardman} Theorem 7.1 page 20, conditional convergence and the extra assumption 
on the differentials ensure strong convergence. 

Finally, by the short exact sequence $0 \to F^1T_* \to T_* \to T_*/F^1T_* \to 0$ of chain complexes we have a long
exact sequence in homology, which shows that the natural map $h:HC_*^-(A) \to HH_*(A)$ factors 
through $H_*(T_*)/(\image (H_*(F^1T_*) \to H_*(T_*))$. In the case of strong convergence, 
this quotient is isomorphic to $E_{0,*}^\infty$.
\end{proof}

\begin{proposition}
\label{prop:smooth_alg_E2}
If $A$ is a smooth and commutative $k$-algebra then 
\[
E_{s,t}^2(A, -\infty , 0) \cong \begin{cases}
H_{DR}^{t-s}(A), & s<0, \\
\Ker (d: \Omega_{A|k}^t \to \Omega_{A|k}^{t+1}), & s=0, \\
0, & s>0.
\end{cases}
\]
\end{proposition}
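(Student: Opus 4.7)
The plan is to combine the description of the $E^1$-page given just before the proposition with the Hochschild-Kostant-Rosenberg theorem. Recall that
\[
E^1_{s,t}(A,-\infty,0) = \begin{cases} HH_{t-s}(A), & s\le 0,\\ 0, & s>0,\end{cases}
\]
with $d^1$ induced by Connes' boundary, $d^1 = B_*: HH_{t-s}(A)\to HH_{t-s+1}(A)$.

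For a smooth commutative $k$-algebra $A$, HKR supplies a natural isomorphism $HH_n(A)\cong \Omega_{A|k}^n$ which intertwines $B_*$ with the exterior derivative $d$ on the algebraic de Rham complex (this second compatibility is the input I would cite from \cite{Loday}). Under this identification, the fixed-$t$ row becomes the de Rham complex starting in degree $t$,
\[
\Omega_{A|k}^t\xrightarrow{\,d\,}\Omega_{A|k}^{t+1}\xrightarrow{\,d\,}\Omega_{A|k}^{t+2}\xrightarrow{\,d\,}\cdots,
\]
with $\Omega_{A|k}^{t-s}$ placed at spectral-sequence position $s$ (so $s=0$ is the leftmost term, and $s\to-\infty$ runs to the right).

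I would then read off $E^2$ column by column. At $s=0$ there is no incoming differential because $E^1_{1,t}=0$, so $E^2_{0,t} = \Ker\bigl(d:\Omega_{A|k}^t\to\Omega_{A|k}^{t+1}\bigr)$. For $s<0$ both the incoming and outgoing $d^1$'s are exterior derivatives, and the homology is the algebraic de Rham cohomology $H_{DR}^{t-s}(A)$ by definition. For $s>0$ the $E^1$-entries already vanish.

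The only substantive obstacle is invoking HKR together with the compatibility of $B_*$ with the exterior derivative; both are standard and handled by a direct reference. Once they are in place the argument is a mechanical reading of a half-infinite de Rham complex, and no convergence or completion issue enters because the statement concerns only $E^2$.
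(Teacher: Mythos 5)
Your argument is correct and is essentially the paper's own proof: both invoke the HKR antisymmetrization isomorphism together with the compatibility of Connes' $B_*$ with the de Rham differential (the paper cites \cite{Loday} 2.3.3 for exactly this), identify each row of the $E^1$-page with a half-infinite de Rham complex, and read off the homology, with the $s=0$ column giving $\Ker(d)$ because $E^1_{1,t}=0$. Nothing further is needed.
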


\begin{proof}
We have an antisymmetrization map 
$\epsilon_* : \Omega_{A|k}^* \to HH_*(A)$ which is an algebra homomorphism since $A$ is commutative
\cite{Loday} 1.3.12 page 27 and 4.2.9 page 126. Furthermore by \cite{Loday} 2.3.3 page 69, there is a 
commutative diagram 
\[
\xymatrix@C=1.5 cm{
\Omega_{A|k}^n \ar[r]^-{d} \ar[d]_-{\epsilon_n} & \Omega_{A|k}^{n+1} \ar[d]^-{\epsilon_{n+1}} \\ 
HH_n(A) \ar[r]^-{B_*} & HH_{n+1}(A)
} \] 

Now if $A$ is smooth over $k$. Then the antisymmetrization map is an isomorphism by 
the HKR-theorem. Thus the differential $d^1 : E_{s,t}^1 \to E_{s-1,t}^1$ is given by the de Rham differential
$d: \Omega_{A|k}^{t-s} \to \Omega_{A|k}^{t-s+1}$. The result follows by taking homology.
\end{proof}

\section{The $\ell$-functor}
\label{sec:l-functor}

In this section we let $k = \FF_2$. In \cite{BO} Definition 5.1, the $\ell$-functor is defined for
graded commutative $k$-algebras equipped with a linear operator $\lambda$ which is a derivation over 
the Frobenius homomorphism in the sense that 
\[\lambda (ab)=\lambda (a) b^2+ a^2\lambda (b). \] 
When comparing to negative cyclic homology, we take $\lambda = 0$. In this case, the definition of 
the functor is as follows:

\begin{definition}
\label{def:ell}
Let $A$ be a commutative $k$-algebra. Then $\ell (A)$ is the free commutative and unital $k$-algebra 
on generators $\delta (a)$, $\phi (a)$, $q(a)$ for $a\in A$ and a single generator $u$ modulo the following 
relations for $a, b, c\in A$:
\begin{align}
& \phi (a+b) = \phi (a) + \phi (b), \label{r1} \\
& \delta (a+b) = \delta (a) + \delta (b), \label{r2} \\
& q(a+b) = q(a)+q(b)+\delta (ab), \label{r3} \\
& \delta (ab)\delta (c)+ \delta (bc)\delta (a)+ \delta(ca) \delta (b) = 0, \label{r4} \\
& \phi (ab) = \phi (a)\phi (b) + uq(a)q(b), \label{r5} \\
& q(ab) = q(a)\phi (b)+\phi (a)q(b), \label{r6} \\
& \delta (a)^2 = 0, \label{r7} \\
& q(a)^2 = 0, \label{r8} \\
& \delta (a) \phi (b) = \delta (ab^2), \label{r9} \\
& \delta (a) q(b) = \delta (ab) \delta (b), \label{r10} \\
& u\delta (a) = 0, \label{r11} \\
& \phi (1) = 1. \label{r12}
\end{align}
If $A=A^*$ is a graded commutative $k$-algebra then so is $\ell (A)$ where the (upper) grading is given by
$|\delta (a)| = |a|-1$, $|\phi (a)| = 2|a|$, $|q(a)|=2|a|-1$ and $|u|=2$. The homological grading
is given by $||\delta (a)|| = 1$, $||\phi (a)|| = 0$, $||q(a)|| = 1$ and $||u||=-2$.
\end{definition}

\begin{remark}
\label{remark:ell}
We have $\delta (0) = \phi (0) = q(0) = 0$ by (\ref{r1}), (\ref{r2}) and (\ref{r3}). Furthermore, 
$\delta (a^2)=q(a^2)=0$ by (\ref{r3}) and (\ref{r6}) such that $\delta (1) = q(1)=0$. 
In \cite{BO_Topology} Definition 7.2 the functor $\ell$, for $\lambda =0$, appears slightly different as a quotient of a 
free commutative non-unital $k$-algebra where the relation $\phi (1) u = u$ is used instead of (\ref{r12}).
The two definitions are however equivalent as one sees by inserting $b=1$ in the relations (\ref{r5}), (\ref{r6}) and (\ref{r9}).
If $A$ is non-negatively graded and connected (that is $A^0 = k$) then $\ell (A)$ is non-negatively graded. 
\end{remark}

We now let $A$ be a commutative and unital $k$-algebra. There is a chain complex
\begin{equation}
\label{ell_cc}
\xymatrix@C=1.0 cm{
\dots \ar[r] & \ell (A) \ar[r]^-{\cdot u} & \ell (A) \ar[r]^-{r} & \Omega_{A|k} \ar[r]^-{\tau} & \ell (A) \ar[r] & \dots  
} 
\end{equation}
which models the long exact sequence for negative cyclic homology
\[
\xymatrix@C=1.0 cm{
\dots \ar[r] & HC^-_{*+2} (A) \ar[r]^-{\cdot u} & HC^-_* (A) \ar[r]^-{h} & HH_*(A) \ar[r]^-{\partial} & 
HC^-_{*+1}(A) \ar[r] & \dots  
} \] 
We will now define the maps in this chain complex and describe their properties. By \cite{BO} Theorem 8.2 we have:

\begin{proposition}
\label{proposition_r}
There is a natural algebra homomorphism
\[ r: \ell (A) \to \Omega^*_{A|k}; \quad \delta (a) \mapsto da, \quad q(a) \mapsto ada, \quad 
\phi (a) \mapsto a^2, \quad u\mapsto 0. \]
(One can remember what it does on generators by the three names de Rham, Cartier and Frobenius.)
Note that $d\circ r=0$. 
\end{proposition}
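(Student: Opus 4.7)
The plan is to exploit the presentation of $\ell(A)$ by generators and relations. I would first define a map $\tilde r$ out of the free commutative unital $k$-algebra on the symbols $\{\delta(a), \phi(a), q(a) : a \in A\} \cup \{u\}$ by the formulas stated in the proposition, and then verify that every one of the twelve defining relations (\ref{r1})--(\ref{r12}) is sent to zero in $\Omega^*_{A|k}$ under this assignment. This produces the required algebra homomorphism $r: \ell(A) \to \Omega^*_{A|k}$, and naturality in $A$ is immediate from the naturality of the Kähler differentials.

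Most of the bookkeeping is mechanical. Relation (\ref{r1}) reduces to $(a+b)^2 = a^2 + b^2$ in characteristic $2$, and (\ref{r12}) to $1^2 = 1$; (\ref{r2}) is linearity of $d$; and (\ref{r3}) follows by expanding $(a+b)\,d(a+b)$ using $d(ab) = a\,db + b\,da$. The multiplicative relations (\ref{r5}), (\ref{r6}) are direct Leibniz computations, with the observation that in (\ref{r5}) the correction term $u\,q(a)q(b)$ maps to $0$. Relation (\ref{r11}) is trivial since $u \mapsto 0$, and (\ref{r7}), (\ref{r8}) follow from $da\,da = 0$ in the exterior algebra. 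For (\ref{r9}) and (\ref{r10}) one uses the Leibniz rule together with $2 = 0$: for example $d(ab^2) = b^2\,da + 2ab\,db = b^2\,da$, and $d(ab)\,db = (a\,db + b\,da)\,db = b\,da\,db$, matching the images of the left-hand sides.

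The one relation requiring a genuine expansion is the Jacobi-type identity (\ref{r4}). Applying $d$ to each factor yields
\[ (a\,db + b\,da)\,dc + (b\,dc + c\,db)\,da + (c\,da + a\,dc)\,db. \]
Using the characteristic-$2$ symmetry $\omega\,\eta = \eta\,\omega$ for $1$-forms $\omega, \eta$, the six summands pair up into three pairs that each sum to $2$ times the same monomial and therefore vanish; for instance $a\,db\,dc$ pairs with $a\,dc\,db$. I expect this to be the only step that is computational rather than purely formal, but it is still routine once one commits to tracking the signs disappearing in characteristic $2$.

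Finally, for $d\circ r = 0$: since $r$ is an algebra map and $d$ is a graded derivation, it suffices to check vanishing on the four types of generators of $\ell(A)$. We have $d(da) = 0$, $d(a\,da) = da\,da = 0$, $d(a^2) = 2a\,da = 0$, and $d(0) = 0$, so $d \circ r$ vanishes on generators and therefore on all of $\ell(A)$.
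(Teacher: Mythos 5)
Your proof is correct: every one of the twelve relation checks goes through as you describe, and the closure argument for $d\circ r=0$ (the set of $x$ with $d(r(x))=0$ is a subalgebra containing the generators) is valid. Note that the paper itself does not prove this proposition in the text --- it simply quotes it from \cite{BO} Theorem~8.2 --- so your direct verification by checking relations (\ref{r1})--(\ref{r12}) against the presentation of $\ell(A)$ is exactly the argument the citation stands in for, and it is complete as written.
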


The following result also appears in \cite{BO}, but we have reformulated it slightly.

\begin{proposition}
There is a natural $k$-linear transfer map 
\[ \tau : \Omega^*_{A|k} \to \ell (A); \quad a_0 da_1 \dots da_n \mapsto \delta (a_0) \delta (a_1) \dots \delta (a_n),
\quad a_0 \mapsto \delta (a_0) .\]
It satisfies $r\circ \tau = d : \Omega_{A|k}^* \to \Omega_{A|k}^*$ and $\tau \circ r = 0$. Frobenius reciprocity holds
\[ \tau ( r(\alpha ) \beta ) = \alpha \tau (\beta ) .\]
Finally, the image of $\tau$ is the following ideal of $\ell (A)$:
\[ I_\delta (A) = (\delta (a) | a\in A). \]
\end{proposition}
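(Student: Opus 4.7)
The strategy is to first establish that the formula
\[ a_0\, da_1 \cdots da_n \mapsto \delta(a_0) \delta(a_1) \cdots \delta(a_n) \]
extends to a well-defined $k$-linear map $\tau \colon \Omega^*_{A|k} \to \ell (A)$. This amounts to checking that the right-hand side respects the presentation of $\Omega^*_{A|k}$: additivity in each $a_i$ follows from (\ref{r2}); the Leibniz rule $d(bc)=b\,dc+c\,db$, combined with the fact that $A$-scalars may be absorbed into the leftmost slot, translates into the identity
\[ \delta(a_0)\delta(bc) = \delta(a_0 b)\delta(c) + \delta(a_0 c)\delta(b), \]
which is relation (\ref{r4}) rewritten using commutativity of $\ell (A)$ and $k=\FF_2$; the alternating relation $da\wedge da = 0$ matches (\ref{r7}); and $d(1)=0$ matches $\delta(1)=0$ from Remark \ref{remark:ell}. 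Commutativity of $\ell (A)$ together with characteristic two supply the needed symmetry of the higher $\delta$-factors and eliminate all signs.

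The identity $r\circ \tau = d$ is immediate, since both sides send $a_0\,da_1\cdots da_n$ to $da_0\wedge da_1\wedge \cdots \wedge da_n$. For Frobenius reciprocity $\tau(r(\alpha)\beta) = \alpha\tau(\beta)$, I would induct on the length of $\alpha$ as a monomial in the algebra generators of $\ell (A)$. The inductive step is formal: if the identity holds for $\alpha_1$ and $\alpha_2$ and arbitrary $\beta$, then, using that $r$ is a ring homomorphism,
\[ \tau(r(\alpha_1 \alpha_2)\beta) = \tau\bigl(r(\alpha_1)(r(\alpha_2)\beta)\bigr) = \alpha_1 \tau(r(\alpha_2)\beta) = \alpha_1 \alpha_2 \tau(\beta). \]
So the task reduces to verification on $\alpha \in \{\delta (a), \phi (a), q(a), u\}$. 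Writing $\beta = a_0\,da_1\cdots da_n$: the case $\alpha = \delta (a)$ is immediate from the defining formula and commutativity; the case $\alpha = u$ uses $r(u)=0$ together with (\ref{r11}); and the cases $\alpha =\phi (a)$ and $\alpha = q(a)$ reduce via $r(\phi (a))=a^2$ and $r(q(a))=a\,da$ to the identities $\delta (a_0 a^2) = \delta (a_0)\phi (a)$ and $\delta (a_0 a)\delta (a) = \delta (a_0) q(a)$ in $\ell (A)$, which are precisely relations (\ref{r9}) and (\ref{r10}).

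The remaining two statements drop out of Frobenius reciprocity. For $\tau \circ r = 0$, take $\beta = 1 \in \Omega^0 = A$ to obtain $\tau (r(\alpha)) = \alpha \tau (1) = \alpha \delta (1) = 0$. The inclusion $\mathrm{image}(\tau) \subseteq I_\delta (A)$ is clear from the formula, since every monomial $\delta (a_0)\cdots \delta (a_n)$ carries the factor $\delta (a_0)$. For the reverse inclusion, an arbitrary generator $\alpha \,\delta (a)$ of $I_\delta (A)$ equals $\alpha\, \tau (a) = \tau (r(\alpha)\cdot a)$ by Frobenius reciprocity, so $I_\delta (A) \subseteq \mathrm{image}(\tau)$. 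The main obstacle is the well-definedness check; once the right substitutions into (\ref{r2}), (\ref{r4}) and (\ref{r7}) are identified, the remainder of the proof is bookkeeping through the existing relations of $\ell (A)$.
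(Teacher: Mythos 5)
Your proposal is correct and follows essentially the same route as the paper: well-definedness via relations (\ref{r2}), (\ref{r4}), (\ref{r7}), Frobenius reciprocity checked on the generators $\delta(a)$, $\phi(a)$, $q(a)$, $u$ using (\ref{r9}), (\ref{r10}), (\ref{r11}), and then $\tau\circ r=0$ and the image statement deduced as formal consequences. The only (welcome) refinements are that you make the multiplicative reduction to generators explicit and derive $I_\delta(A)\subseteq\operatorname{Im}(\tau)$ directly from Frobenius reciprocity, where the paper just cites the relations.
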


\begin{proof}
We first show that $\tau$ is well-defined. By the property $\Omega^*_{A|k} = \Lambda^*_A (\Omega^1_{A|k} )$ 
and relation (\ref{r7}) it suffices to show that $\tau :\Omega^1_{A|k} \to \ell (A)$ is well defined.
But $\tau$ respects the relations 
\begin{align*} 
& ad(b+c)=adb+adc, \\
& ad(bc) = abd(c)+ acd(b) 
\end{align*}
since we have the relations (\ref{r2}) and (\ref{r4}). So $\tau$ is well-defined. 
We have that $r\circ \tau = d$ because $r(\delta (a))= da$. 

It suffices to verify Frobenius reciprocity in the special cases where $\alpha$ is a generator of $\ell (A)$ and 
$\beta = a_0da_1\dots da_n$. For $\alpha = \phi (a)$ we find
\begin{align*}
\tau (r(\phi (a))a_0 da_1\dots da_n) & =
\tau(a^2a_0da_1\dots da_n) = \delta (a^2a_0) \delta(a_1) \dots \delta (a_n) \\
& = \phi (a) \delta (a_0) \delta (a_1) \dots \delta (a_n) = \phi (a) \tau (a_0 da_1\dots da_n)
\end{align*} 
by relation (\ref{r9}). Similarly relation (\ref{r10}) shows that Frobenius reciprocity holds when $\alpha = q(a)$.
For $\alpha = \delta (a)$ it follows directly by the definition of $\tau$. Finally for $\alpha = u$ the left hand side is zero since
$r(u)=0$ and the right hand is zero by relation (\ref{r11}).

If we put $\beta = 1$ in the Frobenius reciprocity equation we find that $\tau \circ r=0$ since $\tau (1) = \delta (1) = 0$.
By relation (\ref{r9}), (\ref{r10}) and (\ref{r11}) the image of $\tau$ is the ideal $(\delta (a) | a\in A)$ in $\ell (A)$.
\end{proof}

Note that $r\circ u=0$, $\tau \circ r=0$ and $u\circ \tau=0$ such that we do have a chain complex (\ref{ell_cc}) 
as mentioned above. We will now describe a quotient of $\ell (A)$ in terms of the de Rham complex on $A$ and
use this to examine how much exactness we have in the chain complex.

\begin{definition}
\label{tilde_ell}
Define the following functor
\[ \tilde \ell (A) = \ell (A)/I_\delta (A) .\] 
\end{definition}

We can describe $\tilde \ell (A)$ as the free commutative and unital $k$-algebra on generators $\phi (a)$, $q(a)$ 
for $a\in A$ and $u$ modulo the relations
\begin{align*}
&\phi (a+b) = \phi (a)+\phi (b), &  & q(a+b) = q(a)+q(b), \\
&\phi (ab) = \phi (a) \phi (b) + uq(a)q(b),& & q(ab)=q(a)\phi (b)+\phi (a)q(b), \\
&q(a)^2 = 0, & & \phi (1) = 1.
\end{align*}

\begin{remark}
There is a surjective algebra homomorphism
\[ f: \tilde \ell (A) \to \Omega_{A|k}^* ; \quad \phi (a) \mapsto a, \quad q(a)\mapsto da, \quad u \mapsto 0 \]
and the induced map 
\[ \tilde f : \tilde \ell (A)/u\tilde \ell (A) \to \Omega_{A|k}^* \]
is an isomorphism by the description above.
\end{remark}

The algebra homomorphism $f$ has a $k$-linear section
\[ s: \Omega_{A|k}^* \to \tilde \ell (A); \quad s(a_0 da_1\dots da_n) = \phi (a_0) q(a_1) \dots q(a_n), \quad 
s(a_0) = \phi (a_0) \]
such that $f\circ s = id$.
 
\begin{proposition}
Define a bilinear map
\[ \{ \cdot , \cdot \} : \Omega_{A|k}^* \otimes \Omega_{A|k}^* \to \Omega_{A|k}^*; \quad
\{ a , b\} = da\cdot db. \]
Then $(\Omega_{A|k}^* , \{ \cdot , \cdot \} )$ is a Poisson algebra. That is $\{ \cdot , \cdot \}$ is a Lie bracket 
which is a derivation in the sense that
\[ \{ a, b\cdot c \} = \{ a, b \} \cdot c + b \cdot \{ a , c \}  \]
for all $a, b, c \in \Omega_{A|k}^*$.  Extend $\{ \cdot , \cdot \}$ to a bracket
on the polynomial algebra $\Omega_{A|k}^* [u]$ by
\[ \{ua, b\} = \{ a, ub \} = u \{ a, b \} .\]
Define a deformation quantization $*$ of the product $\cdot$ on $\Omega_{A|k}^* [u]$ as follows:
\[ a*b=a\cdot b +u \{ a , b \} . \]
The product $*$ satisfies the associative, commutative and distributive laws. Furthermore, one has
\[ (x_0dx_1\dots dx_n)*(y_0dy_1\dots dy_m) = (x_0*y_0)dx_1\dots dx_ndy_1\dots dy_m. \]
\end{proposition}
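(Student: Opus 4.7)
The plan is to verify the claims in the order they are stated: first the Poisson structure, then the $u$-extension and the ring axioms for $*$, then the explicit formula on generators.

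For the Poisson structure I would exploit that we are in characteristic $2$, so graded commutativity in $\Omega_{A|k}^*$ is genuine commutativity and every sign is $1$. Symmetry $\{a,b\}=\{b,a\}$ is then immediate from $da\cdot db=db\cdot da$, and $\{a,a\}=(da)^2=0$ follows because for any monomial $\omega=dx_{i_1}\cdots dx_{i_p}$ one has $\omega^2=0$ by commuting and applying $dx\wedge dx=0$, and in characteristic $2$ squaring is additive on sums. The Jacobi identity is the cleanest step: $\{a,\{b,c\}\}=da\cdot d(db\cdot dc)=0$ because $d^2=0$ and Leibniz, so each cyclic summand vanishes individually. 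Finally, the derivation axiom is just
\[
\{a,bc\}=da\cdot d(bc)=da\cdot db\cdot c+da\cdot b\cdot dc=\{a,b\}c+b\{a,c\},
\]
where the last equality uses commutativity to move $da$ past $b$.

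With the Poisson axioms in hand, extending $\{\cdot,\cdot\}$ to $\Omega_{A|k}^*[u]$ $u$-bilinearly is automatic, and I turn to $a*b=ab+u\{a,b\}$. Commutativity follows from $ab=ba$ together with the already noted symmetry of the bracket, and distributivity is just bilinearity of $\cdot$ and $\{\cdot,\cdot\}$. The only real computation is associativity. Expanding gives
\begin{align*}
(a*b)*c &= abc+u\bigl(\{a,b\}c+\{ab,c\}\bigr)+u^2\{\{a,b\},c\},\\
a*(b*c) &= abc+u\bigl(a\{b,c\}+\{a,bc\}\bigr)+u^2\{a,\{b,c\}\}.
\end{align*}
The $u^2$ terms both vanish by the same $d^2=0$ argument used for Jacobi. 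For the $u^1$ coefficients, the derivation property (combined with symmetry) yields $\{ab,c\}=\{a,c\}b+a\{b,c\}$ and $\{a,bc\}=\{a,b\}c+b\{a,c\}$, and after substitution both sides collapse to $u\bigl(\{a,b\}c+a\{b,c\}+\{a,c\}b\bigr)$, using commutativity of $\Omega_{A|k}^*$ to move $b$ past $\{a,c\}$. This step is where I expect the main bookkeeping effort; it is not deep, but it is the place where the Poisson axioms, $d^2=0$, and characteristic $2$ all enter simultaneously, and it is the only step that could trip one up.

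For the last formula I would first note that $d(x_0\,dx_1\cdots dx_n)=dx_0\cdot dx_1\cdots dx_n$ by $d^2=0$ and Leibniz, and similarly for the second factor. The ordinary product is $x_0y_0\,dx_1\cdots dx_n\,dy_1\cdots dy_m$, obtained by sliding $y_0$ past the $dx_i$'s (trivial signs). The bracket piece contributes
\[
u\,dx_0\cdot dx_1\cdots dx_n\cdot dy_0\cdot dy_1\cdots dy_m
=u\,dx_0\cdot dy_0\cdot dx_1\cdots dx_n\,dy_1\cdots dy_m,
\]
again by commutativity in characteristic $2$. Adding the two gives $(x_0y_0+u\,dx_0\,dy_0)\,dx_1\cdots dx_n\,dy_1\cdots dy_m=(x_0*y_0)\,dx_1\cdots dx_n\,dy_1\cdots dy_m$, as claimed.
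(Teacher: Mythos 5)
Your proposal is correct and follows essentially the same route as the paper: verify the Poisson axioms directly from $d^2=0$ and the Leibniz rule, reduce associativity of $*$ to the derivation property of the bracket (with the $u^2$ terms killed by the Jacobi-type vanishing $\{\,\cdot\,,\{\,\cdot\,,\,\cdot\,\}\}=0$), and check the generator formula by expanding the definitions. The only difference is presentational — you write out the $u^2$ coefficients explicitly where the paper absorbs them into nested $*$-products — so there is nothing further to add.
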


\begin{proof}
We have that $\{ a, a\} = da\cdot da =0$ and the Jacobi identity holds since
$\{ a, \{ b , c \} \} = da \cdot d(db\cdot dc)=0$. Furthermore,
\[ \{ a, b\cdot c \} = da\cdot d(b\cdot c) = da\cdot db \cdot c+ da\cdot b\cdot dc = \{  a, b \} \cdot c + b\cdot \{ a , c \} . \]
The product $*$ is commutative and distributive over addition. It is also associative since
\begin{align*}
a*(b*c) & = a*(bc+u\{ b, c\} ) = abc+u\{ a, bc \} + ua*\{ b, c \} \\
& = abc+u(\{ a, b\} c+b\{ a, c\} + a\{ b, c\} ) 
\end{align*}
which equals
\[ (a*b)*c = (ab+u\{ a, b\} )*c = abc + u\{ ab, c\} + u \{ a, b\} *c. \]
The last formula follows directly from the definition of the product $*$ and the bracket
\end{proof}

\begin{theorem}
\label{theorem_deformation}
There is an isomorphism of $k$-algebras
\[
\xymatrix@C=0.5 cm{
\overline f: \tilde \ell (A) \ar[r]^-{\cong} & (\Omega_{A|k}^* [u], *); & \phi (x) \mapsto x, 
& q(x) \mapsto dx, & u \mapsto u  
} \] 
\end{theorem}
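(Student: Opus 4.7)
The plan is to verify that $\overline f$ respects the presentation of $\tilde \ell(A)$ given after Definition \ref{tilde_ell}, and then to exhibit an explicit two-sided inverse built from the section $s$.

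For well-definedness, the additivity relations and $\phi(1)=1$ are immediate. The substantive checks are the quadratic relations (\ref{r5})--(\ref{r8}). Since $d^2=0$, we have $\{da,db\}=d(da)\cdot d(db)=0$ and $\{a,db\}=da\cdot d(db)=0$, so that $da*db=da\cdot db$ and $a*db=a\cdot db$. Hence the image of the right-hand side of (\ref{r5}) is
\[
a*b + u(da*db) = (ab + u\,da\,db) + u\,da\,db = ab
\]
in characteristic $2$, matching $\overline f(\phi(ab))=ab$. Likewise $da*b+a*db = b\cdot da+a\cdot db = d(ab)$ confirms (\ref{r6}), while $da*da=0$ confirms (\ref{r8}). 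Thus $\overline f$ is a well-defined $k$-algebra homomorphism.

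Next, I would define $\overline g\colon (\Omega_{A|k}^*[u],*)\to\tilde\ell(A)$ as the $k[u]$-linear extension of $s$, setting $\overline g(u^i\alpha)=u^i s(\alpha)$. Using the product formula on pure forms from the preceding proposition, both $\overline g(\alpha*\beta)$ and $\overline g(\alpha)\overline g(\beta)$ on monomials $\alpha=u^i x_0\,dx_1\cdots dx_n$ and $\beta=u^j y_0\,dy_1\cdots dy_m$ take the form $u^{i+j}\cdot C\cdot q(x_1)\cdots q(x_n)q(y_1)\cdots q(y_m)$, and their equality comes down to the single identity
\[
\phi(x_0 y_0) + u\,q(x_0)q(y_0) = \phi(x_0)\phi(y_0),
\]
which is exactly relation (\ref{r5}). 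So $\overline g$ is an algebra homomorphism.

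To conclude, both compositions $\overline g\circ\overline f$ and $\overline f\circ\overline g$ are algebra endomorphisms acting as the identity on a generating set: $\overline g(a)=\phi(a)$, $\overline g(da)=s(da)=q(a)$, while $\overline f(\phi(a))=a$ and $\overline f(q(a))=da$, with $u$ fixed by both. Since $a,da,u$ generate $(\Omega_{A|k}^*[u],*)$ and $\phi(a),q(a),u$ generate $\tilde\ell(A)$, each composition is the identity. The main obstacle in this plan is the multiplicativity of $\overline g$; its clean reduction to relation (\ref{r5}) depends entirely on the closed-form product formula for $*$ supplied by the previous proposition, without which one would have to resort to a delicate inductive argument on the degrees of $\alpha$ and $\beta$.
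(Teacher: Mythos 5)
Your proposal is correct and follows essentially the same route as the paper: check well-definedness of $\overline f$ on the quadratic relations using $\{da,db\}=0$, extend the section $s$ $k[u]$-linearly to an inverse whose multiplicativity reduces, via the closed product formula $(x_0dx_1\dots dx_n)*(y_0dy_1\dots dy_m)=(x_0*y_0)dx_1\dots dx_ndy_1\dots dy_m$, to relation (\ref{r5}), and conclude by checking the two composites on generators. The only cosmetic slip is citing (\ref{r7}) among the relations to verify — $\delta$ is killed in $\tilde\ell(A)$, so only (\ref{r5}), (\ref{r6}) and (\ref{r8}) are relevant, and those are exactly the ones you actually check.
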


\begin{proof}
We first verify that $\overline f$ is a well-defined algebra homomorphism. The transformation rules for 
$\overline f$ define an algebra homomorphism $\hat f$ from the free commutative and unital algebra on generators 
$\phi (a)$, $q(a)$ for $a\in A$ and $u$. 
We must verify that $\hat f$ preserves the relations for $\tilde \ell (A)$ described after Definition \ref{tilde_ell}.

The first two additive relations and the last relation are preserved. So are the remaining three relations by the 
following computations:
\begin{align*}
\hat f (\phi (ab)) &= a\cdot b = a*b+u\{ a, b\} =a*b+u(da*db) \\
&= \hat f (\phi (a) \phi(b) +uq(a)q(b)), \\
\hat f (q(ab)) &= d(ab) = da\cdot b+a\cdot db = da*b+a*db \\
& = \hat f (q(a)\phi (b)+\phi (a)q(b)), \\
\hat f (q(a)^2) &= da*da = da\cdot da+u\{ da, da \} = 0. 
\end{align*}
Next we construct an inverse map $\overline s: (\Omega_{A|k}^* [u], *) \to \tilde \ell (A)$. On $\Omega_{A|k}^*$
we define $\overline s$ to agree with the $k$-linear section $s$. We extend this to all of $\Omega_{A|k}^*[u]$
by the formula $\overline s (ux)=u\overline s(x)$. Note that $\overline s(1) = s(1) = \phi (1) = 1$. 
The following computation shows that $\overline s$ is multiplicative

\begin{align*}
& \overline s ((x_0dx_1\dots dx_n)*(y_0dy_1\dots dy_m)) = 
\overline s ((x_0*y_0)dx_1\dots dx_ndy_1\dots dy_m) \\
= & \overline s ((x_0y_0+udx_0dy_0)dx_1\dots dx_ndy_1\dots dy_m) \\
= & \phi (x_0y_0)q(x_1)\dots q(x_n)q(y_1)\dots q(y_m)+uq(x_0)\dots q(x_n)q(y_0)\dots q(y_m) \\
= &  \phi (x_0)\phi (y_0)q(x_1)\dots q(x_n)q(y_1)\dots q(y_m)\\
= & \overline s (x_0dx_1\dots dx_n)\overline s (y_0dy_1\dots dy_m ).
\end{align*}
We have $\overline f \circ \overline s = id$ but also $\overline s \circ \overline f = id$ as one verifies on
the generators $\phi (a)$, $q(a)$ and $u$.
\end{proof}

\begin{corollary}
In the chain complex (\ref{ell_cc}) one has that $\Ker (\cdot u)=\image (\tau )$.
\end{corollary}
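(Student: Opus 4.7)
The plan is to prove both inclusions, with the nontrivial direction going through the deformation quantization description from Theorem \ref{theorem_deformation}.

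For the inclusion $\image(\tau) \subseteq \Ker(\cdot u)$: by the previous proposition, $\image(\tau)$ equals the ideal $I_\delta(A) = (\delta(a) \mid a\in A)$. Since every element of this ideal is a sum of products $\alpha \cdot \delta(a)$ with $\alpha \in \ell(A)$ and $a \in A$, and relation (\ref{r11}) gives $u\delta(a)=0$, multiplying by $u$ annihilates any such generator. So the inclusion follows immediately.

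For the reverse inclusion $\Ker(\cdot u) \subseteq \image(\tau) = I_\delta(A)$: the key observation is that in the quotient $\tilde\ell(A)=\ell(A)/I_\delta(A)$, multiplication by $u$ is injective. To see this, apply Theorem \ref{theorem_deformation} to identify $\tilde\ell(A)$ with $(\Omega_{A|k}^*[u], *)$. Under this isomorphism, $u$ is sent to the polynomial variable $u$, and I must compute $u*x$ for $x \in \Omega_{A|k}^*[u]$. Writing $u = u\cdot 1$ and using the extension rule $\{u\cdot 1,x\}=u\{1,x\}=u\cdot d1 \cdot dx = 0$, the formula $a*b = a\cdot b+u\{a,b\}$ collapses to $u*x = u\cdot x$, ordinary polynomial multiplication by $u$. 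This is manifestly injective on $\Omega_{A|k}^*[u]$.

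Now if $x \in \ell(A)$ satisfies $ux=0$, then the image $\bar x$ in $\tilde\ell(A)$ satisfies $u\bar x=0$, hence $\bar x=0$ by the injectivity just established. Therefore $x \in I_\delta(A) = \image(\tau)$, completing the proof. No step looks like a real obstacle, since the content has essentially been built up in the preceding propositions; the only subtlety is making sure the bracket really does vanish on the scalar $u$, which is a one-line check using the defining formula for the extended bracket.
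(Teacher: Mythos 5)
Your proof is correct and takes essentially the same route as the paper: the containment $\image(\tau)\subseteq\Ker(\cdot u)$ comes from relation (\ref{r11}), and the reverse containment is reduced, via Theorem \ref{theorem_deformation}, to the injectivity of multiplication by $u$ on $(\Omega_{A|k}^*[u],*)$, where one observes that $u*x=u\cdot x$ is the unchanged polynomial multiplication. The paper phrases this as injectivity of the composite $\tilde\ell(A)\xrightarrow{\cdot u}\ell(A)\to\tilde\ell(A)$, but the content is the same.
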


\begin{proof}
We must show that the map $\cdot u: \ell (A)/\image (\tau )\to \ell (A)$ is injective. 
Its domain equals $\tilde \ell (A)$ and the composite 
\[ \xymatrix@C=0.5 cm{\tilde \ell (A) \ar[r]^-{\cdot u} & \ell (A) \ar[r] & \tilde \ell (A) } \]
is simply multiplication by $u$ on $\tilde \ell (A)$. By the theorem above it suffices to show that
multiplication by $u$ on $(\Omega_{A|k}^*[u], *)$ is injective. This algebra is $\Omega_{A|k}^*[u]$ 
with a new multiplicative structure. But the multiplication by $u$ has not changed in this new structure and
it is injective in the polynomial algebra $\Omega_{A|k}^* [u]$.
\end{proof}

\begin{definition}
\label{ell_mod_u}
Let $u\ella \subseteq \ella$ be the ideal generated by $u$. Define the functor
\[ \Lfa = \ell (A)/ u\ell (A) \]
and let $\Bfa \subseteq \Lfa$ denote the ideal
\[ \Bfa = ( \delta (a) | \medspace a \in A). \]
\end{definition}

\begin{proposition}
Consider the filtration $\ell (A) \supseteq u\ell (A) \supseteq u^2\ell (A) \supseteq \dots $. Its associated graded object is
\[ Gr_*(\ell (A)) \cong {\cal L} (A) \oplus (\Omega_{A|k}^* [u])^{>0} \]
where the last summand denotes the ideal of $\Omega_{A|k}^* [u]$ generated by $u$.
\end{proposition}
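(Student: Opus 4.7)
The plan is to identify the filtration quotients $u^n\ell(A)/u^{n+1}\ell(A)$ one by one, using Theorem \ref{theorem_deformation} (which identifies $\tilde\ell(A)$ with $(\Omega^*_{A|k}[u],*)$) and the preceding Corollary (that $\Ker(\cdot u)=I_\delta(A)$). The $n=0$ piece is $\mathcal{L}(A)$ by the very definition (Definition \ref{ell_mod_u}), so the task reduces to matching $\bigoplus_{n\geq 1} u^n\ell(A)/u^{n+1}\ell(A)$ with the graded pieces $\bigoplus_{n\geq 1} u^n\Omega^*_{A|k}$ of $(\Omega^*_{A|k}[u])^{>0}$.

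The key observation is that relation (\ref{r11}) gives $u\cdot I_\delta(A)=0$, so multiplication by any power $u^n$ on $\ell(A)$ annihilates $I_\delta(A)$ and factors through the projection $\pi\colon\ell(A)\twoheadrightarrow\tilde\ell(A)$. Moreover, under Theorem \ref{theorem_deformation}, the $u$-action on $\tilde\ell(A)$ agrees with ordinary polynomial multiplication by $u$ on $\Omega^*_{A|k}[u]$ (the extra term $u\{u,\cdot\}$ vanishes because $\{u,\cdot\}=u\{1,\cdot\}=0$), so $u^n$ acts injectively on $\tilde\ell(A)$. Combining these two facts upgrades the Corollary from $n=1$ to every $n\geq 1$: if $u^n\alpha=0$ in $\ell(A)$, then $u^n\pi(\alpha)=0$ in $\Omega^*_{A|k}[u]$, forcing $\pi(\alpha)=0$ and hence $\alpha\in I_\delta(A)$. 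A short diagram chase then shows $I_\delta(A)\cap u^n\ell(A)=0$ for every $n\geq 1$, so $\pi$ restricts to an isomorphism
\[
\pi\colon u^n\ell(A)\xrightarrow{\;\cong\;}u^n\tilde\ell(A)=u^n\Omega^*_{A|k}[u].
\]
Passing to the quotient gives $u^n\ell(A)/u^{n+1}\ell(A)\cong u^n\Omega^*_{A|k}[u]/u^{n+1}\Omega^*_{A|k}[u]\cong u^n\Omega^*_{A|k}$, and assembling over $n\geq 1$ produces $(\Omega^*_{A|k}[u])^{>0}$.

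The main obstacle is bookkeeping rather than any new technical input. The naive temptation is to identify $u^n\ell(A)$ with all of $\tilde\ell(A)$ via the inverse of $\cdot u^n$; this sends $u^{n+1}\ell(A)$ only to $u\tilde\ell(A)$, so every graded piece would land in the same ``bottom slot'' of $\Omega^*_{A|k}[u]$ and the $u$-degrees would not line up with those prescribed by $(\Omega^*_{A|k}[u])^{>0}=\bigoplus_{n\geq 1}u^n\Omega^*_{A|k}$. Using $\pi$ for the identifications instead places the filtration $u^n\ell(A)$ onto the filtration $u^n\Omega^*_{A|k}[u]$ with matching indices, and the direct-sum decomposition follows at once.
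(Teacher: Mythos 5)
Your argument is correct and follows essentially the same route as the paper: identify $Gr_0$ with $\mathcal{L}(A)$ by definition, show the projection $\pi\colon\ell(A)\to\tilde\ell(A)$ restricts to an isomorphism $u^n\ell(A)\cong u^n\tilde\ell(A)$ because $I_\delta(A)\cap u^n\ell(A)=0$, and then read off the higher graded pieces from Theorem \ref{theorem_deformation}. The only difference is that you spell out why the intersection is trivial (via the injectivity of $u^n$ on $(\Omega^*_{A|k}[u],*)$), a point the paper's proof attributes tersely to relation (\ref{r11}); your justification is a welcome elaboration, not a different method.
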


\begin{proof}
By definition, $Gr_0(\ell (A)) = \ell (A)/ u\ell (A) = {\cal L} (A)$. For $i>0$ the canonical projection
$\ell (A) \to \tilde \ell (A)$ restricts to a surjection $u^i \ell (A) \to u^i\tilde \ell (A)$. But by relation (\ref{r11}) 
we have trivial intersection $I_\delta (A) \cap u^i\ell (A) = \{ 0  \}$. So $u^i\ell (A) \to u^i\tilde \ell (A)$
is an isomorphism. By the 5-lemma we get an associated isomorphism
\[ Gr_i (\ell (A)) = \frac {u^i \ell (A)} {u^{i+1} \ell (A)} \to 
\frac {u^i \tilde \ell (A)} {u^{i+1} \tilde \ell (A)} = Gr_i (\tilde \ell (A)). \]
Finally, the theorem above gives us that $Gr_i (\tilde \ell (A)) \cong u^i \Omega_{A|k}^*$.
\end{proof}

Recall that the Cartier map is the following algebra homomorphism:
\[\Phi : \Omega_{A|k}^* \to H_{DR}(A); \quad a \mapsto a^2, \quad da\mapsto ada . \]  
When the Cartier map is an isomorphism we have further de Rham complex interpretations which we now describe.

The map $r:\ell (A) \to \Omega_{A|k}^*$ satisfies $d\circ r=0$ and $r(u)=0$. So it induces an algebra
homomorphism $\overline r : {\cal L} (A) \to \Ker (d)$. Note also that $\overline r (\Bfa ) \subseteq \image (d)$. 

\begin{theorem}
\label{theorem_ker(d)}
If the Cartier map $\Phi : \Omega_{A|k}^* \to H_{DR}(A)$ is an isomorphism, then the homomorphism
$\overline r: \Lfa \to \Ker (d)$ and its restriction $\Bfa \to \image (d)$ are also isomorphisms.
\end{theorem}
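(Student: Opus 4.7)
The plan is to reduce $\overline r$ to the Cartier map $\Phi$ via the short exact sequence $0 \to \Bfa \to \Lfa \to \Lfa/\Bfa \to 0$ and the short five lemma. Relation~(\ref{r11}) gives $u\cdot I_\delta (A) = 0$, so $\Bfa$ is the image of $I_\delta (A)$ in $\Lfa$, and consequently $\Lfa/\Bfa \cong \tilde \ell (A)/u\tilde \ell (A) \cong \Omega_{A|k}^*$ by Theorem~\ref{theorem_deformation}, the final identification sending $\phi (a) \mapsto a$ and $q(a) \mapsto da$. The ideal $\Bfa$ is $k$-spanned by products $\delta (a_0) \delta (a_1) \cdots \delta (a_n)$ (relations~(\ref{r9}) and~(\ref{r10})), and such a product maps under $\overline r$ to $da_0 da_1 \cdots da_n = d(a_0 da_1 \cdots da_n) \in \image (d)$. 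Thus $\overline r$ restricts to a map $\Bfa \to \image (d)$ and descends on quotients to a map $\Lfa/\Bfa \to H_{DR}(A)$ which, tracking generators $\phi (a) \mapsto [a^2]$ and $q(a) \mapsto [ada]$, coincides with the Cartier map $\Phi$.

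Hence $\overline r$ fits into a map of short exact sequences from $0 \to \Bfa \to \Lfa \to \Lfa/\Bfa \to 0$ to $0 \to \image (d) \to \Ker (d) \to H_{DR}(A) \to 0$, whose rightmost vertical is $\Phi$, an isomorphism by hypothesis. By the short five lemma, it suffices to prove that the leftmost vertical $\overline r|_\Bfa : \Bfa \to \image (d)$ is also an isomorphism.

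Let $\overline \tau : \Omega_{A|k}^* \to \Lfa$ denote $\tau$ composed with the projection $\ell (A) \to \Lfa$. The description of $\Bfa$ above shows that $\overline \tau$ surjects onto $\Bfa$, and a direct computation on the generators of $\Omega_{A|k}^*$ gives $\overline r \circ \overline \tau = d$. The crucial step is to verify that $\overline \tau$ vanishes on $\Ker (d)$. Given $\omega \in \Ker (d)$, surjectivity of $\Phi$ produces a decomposition $\omega = r(\alpha) + d\eta$ with $\alpha \in \ell (A)$ and $\eta \in \Omega_{A|k}^*$, because every class in $H_{DR}(A)$ is represented (via $\Phi$) by a $k$-polynomial in elements of the form $a^2$ and $ada$, all of which lie in $\image (r)$ (for instance $r(\phi (a_0) q(a_1) \cdots q(a_n)) = a_0^2\, a_1 da_1 \cdots a_n da_n$). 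Frobenius reciprocity yields $\tau (r(\alpha)) = \alpha \cdot \tau (1) = \alpha \cdot \delta (1) = 0$, while $\tau (d\eta)$ splits into terms $\tau (da_0 da_1 \cdots da_n) = \delta (1) \delta (a_0) \cdots \delta (a_n) = 0$. Hence $\overline \tau (\omega) = 0$.

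Consequently $\overline \tau$ factors as $\Omega_{A|k}^* \twoheadrightarrow \Omega_{A|k}^*/\Ker (d) \xrightarrow{\tilde \tau} \Bfa$ with $\tilde \tau$ surjective, and $\overline r|_\Bfa \circ \tilde \tau$ is the canonical isomorphism $\Omega_{A|k}^*/\Ker (d) \xrightarrow{\cong} \image (d)$ induced by $d$. Therefore $\tilde \tau$ is also injective, hence bijective, and $\overline r|_\Bfa = d \circ \tilde \tau^{-1}$ is an isomorphism, completing the argument. The main obstacle is the vanishing $\overline \tau (\Ker (d)) = 0$: this is the only place where the Cartier hypothesis is invoked, and it depends essentially on Frobenius reciprocity together with the identity $\delta (1) = 0$.
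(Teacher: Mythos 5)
Your proof is correct, but it is organized in the opposite direction from the paper's. The paper first asserts that $\overline r:\Lfa \to \Ker(d)$ is an isomorphism by citing an external result (\cite{BO} Theorem~8.5), identifies the induced map $\Lfa/\Bfa \to H_{DR}(A)$ with the Cartier map, and then obtains the restriction $\Bfa \to \image(d)$ as the \emph{consequence}, by a diagram chase in the same map of short exact sequences you write down. You instead prove the restriction statement first and directly: using that $\image(\tau)=I_\delta(A)$ surjects onto $\Bfa$, that $\overline r\circ\overline\tau=d$, and — this is your key step — that $\tau$ kills $\Ker(d)$ because surjectivity of $\Phi$ lets you write any closed form as $r(\alpha)+d\eta$ and both $\tau\circ r$ and $\tau\circ d=\tau\circ r\circ\tau$ vanish. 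This makes $\tilde\tau:\Omega^*_{A|k}/\Ker(d)\to\Bfa$ a surjection whose composite with $\overline r|_{\Bfa}$ is the canonical isomorphism onto $\image(d)$, so both are isomorphisms; the short five lemma then yields the isomorphism for $\overline r$ itself. All the ingredients you use ($r\circ\tau=d$, Frobenius reciprocity, $\delta(1)=0$, $\overline r(\Bfa)\subseteq\image(d)$, the identification $\Lfa/\Bfa\cong\Omega^*_{A|k}$) are established in the paper before the theorem, so your argument is a self-contained replacement for the external citation — arguably a gain, since it exposes exactly where the Cartier hypothesis enters (surjectivity for the vanishing of $\tau$ on $\Ker(d)$, injectivity only in the five-lemma step) — at the cost of being somewhat longer than the paper's two-line reduction.
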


\begin{proof}
Assume that $\Phi$ is an isomorphism. Then $\overline r$ is an isomorphism by \cite{BO} Theorem 8.5.
Put $\widetilde \Omega (A) = \Lfa / \Bfa$. Its description by generators and relations shows that we have an 
isomorphism $\widetilde \Omega (A) \cong \drca$; $\phi (x)\mapsto x$, $q(x)\mapsto dx$.There is 
a commutative diagram, where the rows are short exact sequences
\[ \xymatrix@C=1.0 cm{
0 \ar[r] & \Bfa \ar[r] \ar[d] & \Lfa \ar[r] \ar[d]^-{\overline r} & {\widetilde \Omega} (A) \ar[r] \ar[d] & 0 \\
0 \ar[r] & \image (d) \ar[r] & \Ker (d) \ar[r] & H_{DR}(A) \ar[r] & 0.   
} \]
The right vertical map corresponds to the Cartier map and is thus also an isomorphism. 
The result follows by a diagram chase.
\end{proof}

In consequence, we find
\begin{proposition}
\label{graded_object_special}
If the Cartier map $\Phi: \Omega_{A|k}^* \to H_{DR}(A)$ is an isomorphism, then we have a long exact 
sequence 
\[ \xymatrix@C=1.0 cm{
\dots \ar[r] & \ell (A) \ar[r]^-{\cdot u} & \ell (A) \ar[r]^-{r} & \Omega_{A|k} \ar[r]^-{\tau} & \ell (A) \ar[r] & \dots  
} \]
Furthermore, the filtration 
$\ell (A) \supseteq u\ell (A) \supseteq u^2\ell (A) \supseteq \dots $ has the associated graded object
\[ Gr_*(\ell (A)) \cong \Ker (d) \oplus (\Omega_{A|k}^* [u])^{>0} .\]
\end{proposition}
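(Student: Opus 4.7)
The plan is to deduce both assertions by combining the preceding unnamed Proposition (computing $Gr_*(\ell(A))$ unconditionally) with Theorem \ref{theorem_ker(d)} and the Corollary established just before Definition \ref{ell_mod_u}. Neither claim requires a fresh construction: the hypothesis that the Cartier map is an isomorphism enters only through Theorem \ref{theorem_ker(d)}, which upgrades $\overline r:\Lfa\to \Ker(d)$ and its restriction $\Bfa\to\image(d)$ to isomorphisms.

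For the associated graded, the unnamed Proposition already produces $Gr_*(\ell(A))\cong \Lfa\oplus (\Omega_{A|k}^*[u])^{>0}$. Substituting the isomorphism $\overline r:\Lfa\xrightarrow{\cong}\Ker(d)$ from Theorem \ref{theorem_ker(d)} into the first summand yields $Gr_*(\ell(A))\cong \Ker(d)\oplus (\Omega_{A|k}^*[u])^{>0}$, which is the second assertion.

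For the long exact sequence, I already have a chain complex (since $r\circ u=0$, $\tau\circ r=0$, $u\circ\tau=0$ were noted after the Frobenius-reciprocity proposition), so it suffices to establish exactness at three spots. Exactness at the left $\ell(A)$, i.e.\ $\Ker(\cdot u)=\image(\tau)$, is exactly the preceding Corollary and needs no new input. For exactness at the middle $\ell(A)$, I observe that since $d\circ r=0$, the map $r$ factors as $\ell(A)\twoheadrightarrow \Lfa\xrightarrow{\overline r}\Ker(d)\hookrightarrow \Omega_{A|k}^*$; by Theorem \ref{theorem_ker(d)} the second arrow is injective, so $\Ker(r)$ equals the kernel of the projection, namely $u\ell(A)=\image(\cdot u)$. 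For exactness at $\Omega_{A|k}^*$, take $\omega\in\Ker(\tau)$; then $d(\omega)=r(\tau(\omega))=0$, so $\omega\in\Ker(d)=\image(\overline r)=\image(r)$ by Theorem \ref{theorem_ker(d)}, while the reverse inclusion is immediate from $\tau\circ r=0$.

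There is no serious obstacle: the whole argument is a synthesis in which the single non-formal input is the identification $\image(r)=\Ker(d)$ supplied by Theorem \ref{theorem_ker(d)} under the Cartier-isomorphism hypothesis. The only mildly subtle point to be careful about is that $r$ genuinely lands in $\Ker(d)$ (so that factoring through $\overline r$ is legitimate), which is recorded in Proposition \ref{proposition_r}; once this is in place, each of the three exactness verifications collapses to a one-line diagram chase.
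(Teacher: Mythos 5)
Your proposal is correct and follows essentially the same route as the paper: the chain-complex property plus the earlier Corollary handle $\Ker(\cdot u)=\image(\tau)$, Theorem \ref{theorem_ker(d)} supplies the injectivity of $\overline r$ for exactness at the middle term and the identification $\Ker(d)=\image(r)$ for exactness at $\Omega_{A|k}^*$, and the graded object is obtained by substituting $\overline r$ into the unconditional computation of $Gr_*(\ell(A))$. No gaps.
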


\begin{proof}
We have already seen, that the sequence is a chain complex in general. Furthermore one has:

$\image (\cdot u) = \Ker (r)$: The cokernel of $\cdot u$ is the algebra ${\cal L} (A)$. So we must show that
the map ${\cal L} (A)  \to \Omega_{A|k}^*$ is injective. This follows by the theorem above.

$\image (r) = \Ker (\tau )$: Let $x\in \Ker (\tau )$. Since $r\circ \tau =d$ we see that $dx=0$. By the theorem above
this implies that $x$ is in the image of $r$. 

$\image (\tau ) = \Ker (\cdot u)$: This true in general by the corollary above.

The description of the graded object follows directly by the proposition and theorem above.
\end{proof}

We conclude this section with a lemma regarding the the length of the filtration.

\begin{lemma}
\label{finite_filtration}
Let $A=A^*$ be a non-negatively graded commutative $\FF_2$-algebra such that $A^0$ is finite. Then the filtration
\[ \ell (A) \supseteq u\ell (A) \supseteq u^2 \ell (A) \supseteq \dots \]
is finite in each degree. That is, for each $n$, the filtration
\[ [ \ell (A)]^n \supseteq [u\ell (A)]^n \supseteq [u^2 \ell (A)]^n \supseteq \dots \]
is finite. In consequence, $\lim_i (u^i \ell (A)) = 0$ and $\lim_i^1 (u^i\ell (A)) = 0$.
\end{lemma}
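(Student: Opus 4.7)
The plan is to show that for each fixed upper degree $n$ there exists $i_0$ with $[u^i\ell(A)]^n=0$ for all $i\geq i_0$; the claims about $\lim_i$ and $\lim^1_i$ will then follow immediately. The preceding Proposition establishes (in its proof) that the projection $\ell(A)\to\tilde\ell(A)$ restricts to an isomorphism $u^i\ell(A)\xrightarrow{\cong}u^i\tilde\ell(A)$ for every $i\geq 1$, so it is enough to prove the corresponding statement for $\tilde\ell(A)$. By Theorem~\ref{theorem_deformation}, $\tilde\ell(A)\cong(\Omega_{A|k}^*[u],*)$ via $\phi(a)\leftrightarrow a$, $q(a)\leftrightarrow da$, $u\leftrightarrow u$. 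Since $\{u,\alpha\}=u(d1\wedge d\alpha)=0$, we have $u*\alpha=u\alpha$, so multiplication by $u$ in the deformed product coincides with ordinary multiplication by $u$; in particular $u$ is a non-zero-divisor. Therefore $[u^i\tilde\ell(A)]^n=u^i\cdot\tilde\ell(A)^{n-2i}$ vanishes iff $\tilde\ell(A)^{n-2i}\cong(\Omega_{A|k}^*[u])^{n-2i}$ vanishes. Using the decomposition $(\Omega_{A|k}^*[u])^m=\bigoplus_{j\geq 0}u^j(\Omega_{A|k}^*)^{m-2j}$ in the transported upper grading ($|a|=2|a|$ for $a\in A$, $|da|=2|a|-1$, $|u|=2$), the problem is reduced to: $(\Omega_{A|k}^*)^{m'}=0$ for all sufficiently negative $m'$.

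This last vanishing is the main step, and it is where the hypothesis on $A^0$ enters. Consider a homogeneous monomial $a_0\,dx_1\wedge\cdots\wedge dx_n$ of new degree $m'=2|a_0|+2\sum_k|x_k|-n$, and let $p$ be the number of indices $k$ with $x_k\in A^0$. The remaining $n-p$ factors contribute at least $1$ each to $\sum|x_k|$, so $m'\geq 2|a_0|+2(n-p)-n\geq n-2p$. The key point is that if the $p$ differentials $dx_{k_j}$ with $x_{k_j}\in A^0$ are $\FF_2$-linearly dependent in $\Omega_{A|k}^1$, then the whole wedge vanishes: writing one of them as an $\FF_2$-linear combination of the others and substituting produces a repeated factor, which is killed by $\eta\wedge\eta=0$ in $\Lambda_A^*\Omega_{A|k}^1$ (an identity valid even in characteristic $2$, since the exterior algebra is defined as the quotient by $\eta\otimes\eta$). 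Hence in a non-zero monomial one has $p\leq\dim_{\FF_2}\linspan_{\FF_2}\{dx:x\in A^0\}\leq\dim_{\FF_2}A^0=:d_0$, a finite number by hypothesis. Consequently $m'\geq n-2p\geq -2d_0$, so $(\Omega_{A|k}^*)^{m'}=0$ whenever $m'<-2d_0$.

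Combining the two reductions, $[u^i\ell(A)]^n=0$ whenever $n-2i<-2d_0$, that is, for every $i>(n+2d_0)/2$, a finite bound depending only on $n$ and $d_0$. Thus the filtration in degree $n$ reaches $0$ after finitely many steps. From this, $\lim_i u^i\ell(A)=0$ follows degreewise from the termination, and $\lim^1_i u^i\ell(A)=0$ from the trivial Mittag--Leffler condition (the transition maps are eventually zero in each degree). The main obstacle is the $\FF_2$-linear independence argument of the middle paragraph; the rest is formal manipulation of the structural results already established for $\ell(A)$.
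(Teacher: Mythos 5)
Your proof is correct, but it takes a different route from the paper's. The paper argues directly inside $\ell(A)$: by the relations $u\delta(a)=0$ and $q(a)^2=0$, the ideal $u^i\ell(A)$ is spanned by monomials $u^j\phi(a_1)\cdots\phi(a_r)q(b_1)\cdots q(b_s)$ with $j\geq i$ and the $b_h$ \emph{pairwise distinct}, and since each factor contributes degree $\geq -1$ only when $b_h\in A^0$, at most $\#(A^0)$ factors can do so; hence $|g|\geq 2i-\#(A^0)$ and $[u^i\ell(A)]^n=0$ for $i>(n+\#(A^0))/2$. You instead route through the structural results already proved: the isomorphisms $u^i\ell(A)\cong u^i\tilde\ell(A)$ and $\tilde\ell(A)\cong(\Omega_{A|k}^*[u],*)$, the observation that $u*(-)$ is ordinary (injective) multiplication by $u$, and then a degree count in the de Rham complex in which the number of degree-lowering factors $dx$ with $x\in A^0$ in a nonzero wedge is bounded by $\dim_{\FF_2}A^0$ via the relation $\eta\wedge\eta=0$. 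The two counting arguments are parallel in spirit (distinctness of elements forced by $q(a)^2=0$ versus linear independence of differentials forced by $\eta\wedge\eta=0$), but yours requires the deformation theorem as input and yields the slightly sharper bound $i>(n+2\dim_{\FF_2}A^0)/2$, whereas the paper's is shorter, self-contained, and uses the cruder cardinality bound $\#(A^0)=2^{\dim A^0}$; either suffices for the conclusion. Your handling of $\lim$ and $\lim^1$ (degreewise eventual vanishing, hence trivial Mittag--Leffler) matches the paper's appeal to Boardman.
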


\begin{proof}
For $i>0$, the $\FF_2$-vector space $u^i \ell (A)$ is generated by the elements of the form
\[ g = u^j \phi (a_1) \dots \phi (a_r) q(b_1)\dots q(b_s) \]
where $j \geq i$ and $b_h\neq b_k$ for $h\neq k$. This follows by the relations $u\delta (a)=0$ and $q(a)^2=0$.
The degree of such an element is
\begin{align*} 
|g| & = 2j+2|a_1|+\dots +2|a_r| + 2|b_1|-1 +\dots +2|b_s|-1 \\
& \geq 2j-\# \{k| b_k \in A^0 \} \\
& \geq 2i-\# (A^0). 
\end{align*}
Thus, 
\[ [u^i \ell (A)]^n =0 \text{ for } i > \frac {n+\# (A^0)} 2 . \]
For each $n$ we have 
\[ [\lim_i (u^i \ell (A))]^n = \lim_i [u^i \ell (A)]^n = 0 .\]
So the inverse limit is trivial. Since $[u^i \ell (A)]^n \supseteq [u^{i+1} \ell (A)]^n$ becomes the surjection $0=0$ from 
the stage given above, we have 
\[ [{\lim_i}^1 (u^i \ell (A)) ]^n = {\lim_i}^1 [u^i \ell (A)]^n =0 \]
by \cite{Boardman} Proposition 1.8. So the ${\lim}^1$ term is also trivial.
\end{proof}

\section{The approximation theorem}
\label{sec:themap}

\begin{definition}
We say that a unital $k$-algebra $A$ is {\em supplemented} if it is equipped with an augmentation $A\to k$ such that 
the composite $k\to A\to k$ is the identity.
\end{definition}

\begin{theorem}
\label{thm:lfunctorapprox}
Let $A$ be a commutative and unital (possibly non-negatively graded) algebra over $k=\FF_2$. 
Then there is a natural algebra homomorphism as follows:
\begin{align*} 
\psi: \ell (A) \to HC_*^- (A); \quad & \delta (a) \mapsto 1\otimes 1[a], \\ 
& q(a) \mapsto 1\otimes a[a], \\ 
& \phi (a) \mapsto 1\otimes a^2[] +u \otimes 1[a|a], \\
& u\mapsto u\otimes 1[].
\end{align*}
Furthermore, one has a commutative diagram
\[
\xymatrix@C=1.0 cm{
\dots \ar[r] & \ell (A) \ar[r]^-{\cdot u} \ar[d]^-{\psi} & \ell (A) \ar[d]^-{\psi} \ar[r]^-{r} 
& \Omega_{A|k}^* \ar[r]^-{\tau} \ar[d]^-{\epsilon} & \ell (A) \ar[r] \ar[d]^-{\psi} & \dots  \\
\dots \ar[r] & HC^-_{*+2} (A) \ar[r]^-{\cdot u} & HC^-_* (A) \ar[r]^-{h} & HH_*(A) \ar[r]^-{\partial} & 
HC^-_{*+1}(A) \ar[r] & \dots  
}\] 
where the upper row is a chain complex with $\Ker (\cdot u) = \image (\tau )$ and the 
lower row is the long exact sequence for negative cyclic homology. 

If $A$ is smooth over $k$, supplemented and of finite type, then $\psi$ is an algebra isomorphism. 
In this case both rows of the diagram are exact and the vertical maps are isomorphisms.
\end{theorem}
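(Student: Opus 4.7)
The plan has three stages: construct $\psi$ by verifying it respects the twelve defining relations of $\ell(A)$; check that the ladder diagram commutes; and in the smooth, supplemented, finite type case, deduce that $\psi$ is an isomorphism from a filtration comparison.

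For the construction, I would define $\psi$ first on the free commutative $\FF_2$-algebra generated by $\delta(a), \phi(a), q(a), u$ using the prescribed formulas, then verify each of \eqref{r1}--\eqref{r12} modulo the boundary $\partial^-$ on $\Cneg_*(A)$. The additive relations \eqref{r1}--\eqref{r3} follow from the Hochschild identity $b(1 \otimes (1, a, b)) = (a, b) + (b, a) + (ab, 1)$, which in the normalized complex in characteristic two yields $\delta(a+b) \equiv \delta(a) + \delta(b)$ and $q(a+b) \equiv q(a) + q(b) + \delta(ab)$. Relation \eqref{r11} holds because $\partial^-(1 \otimes a[]) = u \otimes 1[a]$, so $u \cdot \psi(\delta(a))$ is a boundary. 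The squaring relations \eqref{r7}, \eqref{r8} hold because the relevant shuffle sums produce pairs of identical terms that cancel in $\FF_2$. The multiplicative relations \eqref{r4}--\eqref{r6}, \eqref{r9}, \eqref{r10} are verified via Remark \ref{product_formula} by expanding both shuffle and cyclic shuffle contributions, using the vanishing of cyclic-shuffle terms when $a_0 = 1$ or $b_0 = 1$, and cancelling residual summands against Hochschild boundaries. The correction $uq(a)q(b)$ in \eqref{r5} is precisely the surviving cyclic-shuffle term of $\psi(\phi(a))\psi(\phi(b))$.

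For the ladder diagram, the leftmost square is automatic since $\psi$ is $u$-linear by construction. Commutativity of the middle square $h \circ \psi = \epsilon \circ r$ is checked on generators: $h$ kills every summand with positive $u$-power, so $h(\psi(\delta(a))) = 1[a] = \epsilon(da)$, $h(\psi(q(a))) = a[a] = \epsilon(a\,da)$, and $h(\psi(\phi(a))) = a^2[] = \epsilon(a^2)$. For the right square $\partial \circ \epsilon = \psi \circ \tau$, I would lift $\epsilon(a_0 da_1 \cdots da_n)$ to the $u^0$-slot of $T_*^{-\infty, 0}$, apply $\partial^-$, and identify the Connes-operator contribution with the chain representative of $\psi(\delta(a_0) \delta(a_1) \cdots \delta(a_n))$.

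For the isomorphism in the smooth case, the strategy is to compare $u$-adic filtrations through the HKR and Cartier isomorphisms. Since $\FF_2$ is perfect, smoothness of $A$ makes both isomorphisms, so Proposition \ref{prop:smooth_alg_E2} computes the $E^2$-page of the spectral sequence of Proposition \ref{prop_spectral_sequence}, and Proposition \ref{graded_object_special} computes $Gr_*(\ell(A)) \cong \Ker(d) \oplus (\Omega_{A|k}^*[u])^{>0}$; under Cartier these match bidegree by bidegree. Supplementation and finite type make the filtration on $\ell(A)$ finite in each degree by Lemma \ref{finite_filtration}, and the vanishing of $\Omega^k_{A|k}$ above $\dim A$ ensures that each total-degree diagonal of the spectral sequence has only finitely many nonzero entries, giving strong convergence. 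Because $\psi$ respects filtrations and by the middle-square commutativity induces the HKR antisymmetrization on $E^1$, the induced map on $E^\infty$ is an isomorphism; finiteness of the filtration in each degree lets one lift this to an isomorphism $\psi: \ell(A) \to HC_*^-(A)$. The main obstacle is the stage-one chain-level verification of the quadratic relations, particularly \eqref{r5} and \eqref{r6}, together with confirming collapse of the spectral sequence at $E^2$; both hinge on characteristic-two coincidences that make the bookkeeping delicate but ultimately force the desired identities.
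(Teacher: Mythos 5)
Your overall architecture coincides with the paper's: define $\psi$ on generators, verify the twelve relations at the chain level modulo boundaries, check the three squares of the ladder on generators, and then compare the $u$-adic filtration of $\ell(A)$ with the column filtration of $\Cneg_*(A)$ via Proposition \ref{graded_object_special} and the spectral sequence of Proposition \ref{prop_spectral_sequence}. The first two stages are fine in outline (the paper carries out the same shuffle/cyclic-shuffle computations and exhibits explicit bounding chains; your treatment of the right-hand square by lifting $\epsilon(\omega)$ and applying $\partial^-$ is a slightly more computational variant of the paper's reduction via $HC^-_*$-linearity and Frobenius reciprocity, and both work).

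The genuine gap is in the final stage: you never actually prove that the spectral sequence degenerates at $E^2$, and without $E^2=E^\infty$ the identification of $Gr_*(HC^-_*(A))$ with $\Ker(d)\oplus(\Omega^*_{A|k}[u])^{>0}$ --- and hence the claim that $\psi$ induces an isomorphism on associated gradeds --- is unsupported. You flag "collapse at $E^2$" as an obstacle and attribute it to "characteristic-two coincidences", but the actual mechanism is specific and uses both remaining hypotheses in ways your sketch misassigns. First, the differentials leaving the $0$th column are killed by an edge-homomorphism argument: by Proposition \ref{prop_spectral_sequence} the composite $HC^-_t(A)\twoheadrightarrow E^\infty_{0,t}\hookrightarrow E^1_{0,t}=HH_t(A)$ is $h$, and by the middle square its precomposition with $\psi$ is $\epsilon\circ r$, whose image is all of $\Ker(d)=E^2_{0,t}$ because $\overline r:\Lfa\to\Ker(d)$ is \emph{surjective} (Theorem \ref{theorem_ker(d)}, which needs the Cartier isomorphism); since $E^\infty_{0,t}\subseteq E^2_{0,t}$ this forces equality. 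Second, the supplementation hypothesis is not there to feed Lemma \ref{finite_filtration} (that lemma only needs $A^0$ finite); it is what guarantees that the periodicity class $u$ survives to $E^\infty$ (via the retraction onto $HC^-_*(k)$), and only then does $k[u]$-linearity of the differentials, together with the fact that every $E^2$-class in a negative column is a $u$-power multiple of a class in column $0$, kill all remaining differentials. A bidegree-by-bidegree "match" between $E^2$ and $Gr_*(\ell(A))$ does not by itself yield degeneration or an isomorphism, since you have not shown the induced map on associated gradeds is injective or surjective independently of knowing $E^\infty$. Once degeneration is in place, your appeal to finiteness of the filtration in each degree does legitimately upgrade the graded isomorphism to an isomorphism $\psi$ (the paper invokes Boardman's comparison theorem for complete Hausdorff exhaustive filtrations at this point).
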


\begin{proof}
We first show that there is a natural algebra homomorphism $\psi$ as stated. Name the image elements as follows:
\[ \odelta (a) = 1\otimes 1[a], \quad \oq (a) = 1\otimes a[a], \quad \ophi (a) = 1\otimes a^2 []+u\otimes 1[a|a], 
\quad \ou = u\otimes 1[] \] 
We must verify that they are cycles in the chain complex $\Cneg_*(A)$.

Since $b(1[a])=a[]+a[]=0$ and $B(1[a])=1[1|a]+1[a|1]=0$ we have that $\D (\odelta (a))=0$.

Since $b(a[a])=a^2[]+a^2[]=0$ and $B(a[a])=1[a|a]+1[a|a]=0$ we have that $\D (\oq (a))=0$.

We have $b(a^2[])=0$ and $B(a^2[])=1[a^2]$ such that $\D (a^2[])=u\otimes 1[a^2]$. Furthermore, 
$b(1[a|a])=a[a]+1[a^2]+a[a]=1[a^2]$ and $B(1[a|a])=0$ such that $\D (u\otimes 1[a|a]) = u\otimes 1[a^2]$.
Thus, $\D (\ophi (a))=0$.

Finally, $b(1[])=0$ and $B(1[])=0$ such that $\D (\ou )=0$.

Next we show that the relations (\ref{r1})-(\ref{r12}) are mapped to valid relations among the 
classes represented by $\odelta (a)$, $\oq (a)$, $\ophi (a)$ and $\ou$. 

(\ref{r1}): We have
\begin{align*}
\ophi (a+b) &= 1\tensor (a+b)^2[]+u\tensor 1[a+b|a+b] \\
&= 1\tensor a^2[]+1\tensor b^2[] + u \tensor (1[a|a]+1[b|b]+1[a|b]+1[b|a]) \\
&= \ophi (a) + \ophi (b) + u\tensor (1[a|b]+1[b|a]).
\end{align*}
The last term is a boundary since $b(a[b])=ab[]+ab[]=0$ and $B(a[b])=1[a|b]+1[b|a]$ such that
$\D (1\tensor a[b]) = u\tensor (1[a|b]+1[b|a])$.

(\ref{r2}): Is OK since
\[ \odelta (a+b) = 1\tensor 1[a+b]=1\tensor 1[a]+1\tensor 1[b]=\odelta (a)+\odelta (b). \]

(\ref{r3}): We have
\begin{align*}
\oq (a+b) &= 1\tensor (a+b)[a+b]=1\tensor a[a]+1\tensor b[b]+ 1\tensor a[b]+1\tensor b[a] \\
&= \oq (a) + \oq (b) + 1\tensor (a[b]+b[a]).
\end{align*}
Hence we must show that $\odelta (ab) +1\tensor (a[b]+b[a])=1\tensor (1[ab]+a[b]+b[a])$ is a boundary. But 
$b(1[a|b])=a[b]+1[ab]+b[a]$ and $B(1[a|b])=0$ so the element above is the boundary $\D (1\tensor 1[a|b])$.

(\ref{r4}): By the formula for the product $\multneg :\Cneg_1 (A)\otimes \Cneg_1 (A) \to \Cneg_2 (A)$ 
given in Remark \ref{product_formula} we find
\[ \odelta (ab) \odelta (c) = \multneg (1\otimes 1[ab]\otimes 1\otimes 1[c]) =1\otimes 1[ab|c]+1\otimes 1[c|ab]. \]
So we have
\begin{align*}
& \odelta (ab)\odelta (c) + \odelta (bc) \odelta (a) + \odelta (ca) \odelta (b) = \\
& 1\otimes 1[ab|c] + 1\otimes 1[c|ab] + 1\otimes 1[bc|a] + 1\otimes 1[a|bc] +1\otimes 1[ca|b] + 1\otimes 1[b|ca]. 
\end{align*}
But this expression equals $\D (1\otimes 1[a|b|c]+1\otimes 1[b|c|a] +1\otimes 1[c|a|b])$ by a direct computation.

(\ref{r5}): Firstly, we have
\begin{align*}
\ophi (a) \ophi (b) 
=\multneg \big( ( & 1\otimes a^2 [] +u\otimes 1[a|a])\otimes (1\otimes b^2[] + u\otimes 1[b|b]) \big) \\
=\multneg \big( & 1\otimes a^2[]\otimes 1\otimes b^2[]+1\otimes a^2[]\otimes u\otimes 1[b|b]\\
+&  u\otimes 1[a|a] \otimes 1\otimes b^2[]+u\otimes 1[a|a]\otimes u\otimes 1[b|b]
\big) .
\end{align*}
By Remark \ref{product_formula} we find
\begin{equation}
\label{small_product_formula}
 \multneg (u^i\otimes a_0[] \otimes u^j\otimes b_0[])=
u^{i+j}\otimes a_0b_0[]+u^{i+j+1}\otimes 1[a_0|b_0] . 
\end{equation}
Thus
\[ \multneg (1\otimes a^2[] \otimes 1\otimes b^2[])=1\otimes a^2b^2[]+u\otimes 1[a^2|b^2]. \]
By Remark \ref{product_formula} we also find
\begin{align*}
& \multneg (1\otimes a^2 []\otimes u\otimes 1[b|b]) = u\otimes a^2[b|b], \\
& \multneg (u\otimes 1[a|a]\otimes 1\otimes b^2[]) = u\otimes b^2[b|b],
\end{align*}
and
\begin{align*}
& \multneg (u\otimes 1[a|a]\otimes u\otimes 1[b|b]) = 
u^2 \otimes \sum_{\tau \in S(2,2)} \tau \cdot 1[a|a|b|b] = \\
& u^2\otimes (1[a|a|b|b]+1[a|b|a|b]+1[a|b|b|a]+1[b|a|a|b]+1[b|a|b|a]+1[b|b|a|a]).
\end{align*}
So we have
\begin{align*}
& \ophi (a) \ophi (b) = 1\otimes a^2b^2[] + u\otimes (1[a^2|b^2]+a^2[b|b]+b^2[a|a])\\
& +u^2\otimes (1[a|a|b|b]+1[a|b|a|b]+1[a|b|b|a]+1[b|a|a|b]+1[b|a|b|a]+1[b|b|a|a]).
\end{align*}
Secondly,
\begin{align*}
u\oq (a) \oq(b) &= \multneg (u\otimes a[a]\otimes 1\otimes b[b]) \\
&= u\otimes (ab[a|b]+ab[b|a])+u^2\otimes \sum_{\sigma \in CS(2,2)} \sigma^{-1} \cdot 1[a|a|b|b].
\end{align*}
The cyclic $(2,2)$-shuffles are the 12 permutations mapping $(1,2,3,4)$ to
\begin{align*}
& (1,2,3,4), (2,1,3,4), (1,2,4,3), (2,1,4,3), (1,3,2,4), (1,3,4,2), \\
& (1,4,2,3), (4,1,2,3), (1,4,3,2), (4,1,3,2), (2,4,1,3), (4,2,1,3). 
\end{align*}
Using this list, one finds
\[ \sum_{\sigma \in CS(2,2)} \sigma^{-1} \cdot 1[a|a|b|b] = 1[a|b|a|b]+1[b|a|b|a] \]
such that
\[ u\oq (a)\oq (b) = u\otimes (ab[a|b]+ab[b|a])+u^2\otimes (1[a|b|a|b]+1[b|a|b|a]). \]
Finally, $\ophi (ab) = 1\otimes a^2b^2[]+u\otimes 1[ab|ab]$ so we have
\begin{align*}
\ophi (ab) & +\ophi (a) \ophi (b) + u\oq (a) \oq (b) = \\
u & \otimes (1[ab|ab]+1[a^2|b^2]+ab[a|b]+ab[b|a]+a^2[b|b]+b^2[a|a]) \\
+u^2 & \otimes (1[a|a|b|b]+1[a|b|b|a]+1[b|b|a|a]+1[b|a|a|b]).
\end{align*}
We must show that this element is a boundary. A direct computation shows that it equals
\[ \D (u\otimes (1[a|b|ab]+1[a|a|b^2]+a[b|a|b]+a[a|b|b])) \]
so the relation is OK.

(\ref{r6}):  We have
\begin{align*}
\oq (a) \ophi (b) & = \multneg (1\otimes a[a] \otimes (1\otimes b^2[]+u\otimes 1[b|b])) \\
& = \multneg (1\otimes a[a] \otimes 1\otimes b^2[])+\multneg (1\otimes a[a] \otimes u\otimes 1[b|b])) \\
&= 1 \otimes ab^2[a] + u\otimes \sum_{\sigma \in CS(2,1)} \sigma^{-1} \cdot 1[a|a|b^2]
+u\otimes \sum_{\tau \in S(1,2)} \tau \cdot a[a|b|b].
\end{align*}
The cyclic $(2,1)$-shuffles are the three permutations mapping $(1,2,3)$ to 
$(1,2,3)$, $(1,3,2)$ and $(2,1,3)$. The $(1,2)$-shuffles are the three permutations mapping $(1,2,3)$ to 
$(1,2,3)$, $(2,1,3)$ and $(3,1,2)$. So
\begin{align*}
\oq (a) \ophi (b) &=  1 \otimes ab^2[a]+u\otimes (1[a|a|b^2]+1[a|b^2|a]+1[a|a|b^2])\\
&+u\otimes (a[a|b|b]+a[b|a|b]+a[b|b|a]) \\
&= 1 \otimes ab^2[a]+u\otimes (1[a|b^2|a]+a[a|b|b]+a[b|a|b]+a[b|b|a]).
\end{align*}
Thus we have
\begin{align*}
\oq (ab) + \oq (a) \ophi(b) + \ophi (a) \oq(b) &= 1\otimes ab[ab] + 1\otimes ab^2[a] + 1\otimes a^2b[b] \\
+u\otimes ( & 1[a|b^2|a]+a[a|b|b]+a[b|a|b]+a[b|b|a]\\
+ & 1[b|a^2|b]+b[b|a|a]+b[a|b|a]+b[a|a|b]).
\end{align*}
A direct computation shows that this element equals the boundary
\[ \D ( 1\otimes ab[a|b]+ u\otimes (1[a|b|b|a]+1[b|a|a|b]+1[b|a|b|a])) . \]
So the relation is OK.

(\ref{r7}): This relation is OK since
\[\odelta (a)^2 = \multneg (1\otimes 1[a]\otimes 1\otimes 1[a]) = 1\otimes 1[a|a]+1\otimes 1[a|a]=0.\]

(\ref{r8}): We have
\begin{align*}
\oq (a)^2 &= \multneg (1\otimes a[a]\otimes 1\otimes a[a]) \\
& = 1\otimes a^2[a|a]+1\otimes a^2[a|a] + u\otimes \sum_{\sigma \in CS(2,2)} \sigma^{-1} \cdot 1[a|a|a|a] 
\end{align*}
which is zero since there are 12 cyclic $(2,2)$-shuffles.

(\ref{r9}): We have
\begin{align*}
\odelta (a) \ophi (b) &= \multneg \big( 1\otimes 1[a]\otimes (1\otimes b^2[]+u\otimes 1[b|b]) \big) \\
&= 1\otimes \sum_{\tau \in S(1,0)} \tau \cdot b^2[a] + 
u \otimes \sum_{\tau \in S(1,2)} \tau \cdot 1[a|b|b] \\
&= 1\otimes b^2[a]+u\otimes (1[a|b|b]+1[b|a|b]+1[b|b|a])
\end{align*}
and $\odelta (ab^2)=1\otimes 1[ab^2]$. By a direct computation, one finds that
\[ \D (1\otimes 1[a|b^2]+1\otimes a[b|b]) = \odelta (a) \ophi (b) + \odelta (ab^2) . \]
So the relation is OK.

(\ref{r10}): We have
\begin{align*} 
& \odelta (a) \oq (b) = \multneg (1\otimes 1[a]\otimes 1\otimes b[b]) = 1\otimes (b[a|b]+b[b|a]), \\
& \odelta (ab) \odelta (b) = \multneg (1\otimes 1[ab]\otimes 1\otimes 1[b])=1\otimes (1[ab|b]+1[b|ab]). 
\end{align*}
Such that
\[ \D (1\otimes 1[b|a|b]) = \odelta (a) \oq (b)+ \odelta (ab) \odelta (b). \]

(\ref{r11}): This relation is OK since
\[ \ou \odelta (a) = u\otimes 1[a] = \D (1\otimes a[]) . \]

(\ref{r12}): We have
\[ \overline \phi (1) = 1\otimes 1[] + u\otimes 1[1|1]=1\otimes 1[] \]
which is the unit in $HC^-_*(A)$. 

Regarding the commutativity of the diagram, we recall that the antisymmetrization map is given by
\[ \epsilon : \Omega_{A|k}^* \to HH_*(A); \quad a_0da_1\dots da_n \mapsto
\sum_{\sigma \in S(n)} \sigma \cdot a_0[a_1|\dots |a_n]. \] 
Especially, $\epsilon (a_0) = a_0[]$ and $\epsilon (a_0da_1) = a_0[a_1]$. The middle square commutes since

\begin{align*}
& h\circ \psi (\delta (a))=h(1\otimes 1[a])=1[a] =\varepsilon (da) = \varepsilon \circ r (\delta (a)), \\
& h\circ \psi (q(a))=h(1\otimes a[a])=a[a]=\varepsilon (ada) = \varepsilon \circ r (q(a)), \\
& h\circ \psi (\phi (a))=h(1\otimes a^2[]+u\otimes 1[a|a])=a^2[]=\varepsilon (a^2)=\varepsilon \circ r (\phi (a)), \\
& h\circ \psi (u)=h(u\otimes 1[])=0=\varepsilon (0) = \varepsilon \circ r (u). 
\end{align*}

The left square commutes since $\psi$ is an algebra homomorphism. Regarding the right square, we find a
formula for the connecting homomorphism $\partial$ by the snake lemma. Consider the following diagram
where $T_* = C_*^- (A)$ and $C_* = C_*(A)$:

\[ \xymatrix@C=1.0 cm{
0 \ar[r] & T_* \ar[r]^-{\cdot u} \ar[d]^-{\partial^-} & T_* \ar[r]^-{h} \ar[d]^-{\partial^-} & C_* \ar[r] \ar[d]^-{b} & 0 \\
0 \ar[r] & T_* \ar[r]^-{\cdot u} & T_* \ar[r]^-{h} & C_* \ar[r] & 0. 
} \]

Let $x\in C_*$ such that $b(x)=0$. Then $1\otimes x\in T_*$ is a lift of $x$ and 
$\partial^- (1\otimes x) = 1  \otimes b(x) +u\otimes B(x) = u\otimes B(x)$. A lift of this element is $1\otimes B(x)\in T_*$
so we have $\partial (a) = 1\otimes B(a)$.

By Proposition \ref{prop:HC-mod} the connecting homomorphism $\partial$ is $HC_*^-(A)$-linear so
\[ \partial (h(y)x) = y\partial(x). \]
One can also verify this directly as follows:
Let $y\in T_*$ such that $\partial^-(y)=0$. Then $y\cdot (1\otimes x) \in T_*$ is a lift 
of the cycle $h(y)x\in C_*$ and
\[ \partial^- (y\cdot (1\otimes x)) = \partial^- (y)\cdot (1\otimes x)+y\cdot \partial^- (1\otimes x) =
y\cdot (u \otimes B(x)) \]
This element lifts to $y\cdot (1\otimes B(x))= y\partial (x)$ as desired.

We can write $a_0da_1\dots da_n\in \Omega_{A|k}^*$ as $r(\delta (a_1) \dots \delta (a_n)) a_0$. Furthermore, 
the commutativity of the middle diagram together with the $HC_*^- (A)$-linearity gives us that
\begin{align*}
& \psi\circ \tau (r(a)b) = \psi (a\tau(b)) = \psi (a) \psi (\tau (b)), \\
& \partial \circ \epsilon (r(a)b) = \partial (\epsilon (r(a))\epsilon (b)) = \partial (h(\psi (a))\epsilon (b)) = 
\psi (a) \partial (\epsilon (b)).
\end{align*}
Thus it suffices to show that $\psi (\tau (a_0)) = \partial (\epsilon (a_0))$ for $a_0\in A$, which is seen
as follows: $\psi (\tau (a_0))=\psi (\delta (a_0)) = 1\otimes 1[a_0] = 1\otimes B(a_0[]) = \partial (a_0[]) = 
\partial (\epsilon (a_0)).$

The chain complex property and partial exactness of the upper row of the diagram was proven in the previous section.

Finally, under the additional assumptions on $A$, the antisymmetrization map $\epsilon$ is an isomorphism by the 
Hochschild-Kostant-Rosenberg theorem. Furthermore, the Cartier map 
$\Phi: \Omega_{A|k}^*\to  H_{DR}(A)$ is an isomorphism by a classical result of 
Cartier \cite{C}, \cite{Ka} Theorem 7.2. The finiteness condition on $A$ ensures that the spectral sequence from 
Proposition \ref{prop_spectral_sequence} is strongly convergent. Its $E^2$-page is described at the end of 
section \ref{spectral_sequence}. The description of the composite map given in Proposition \ref{prop_spectral_sequence} together with the commutativity of the middle square, which we just proved, and Theorem \ref{theorem_ker(d)}, shows 
that there are no higher differentials on the $0$th-column. So $E_{0,*}^2=E_{0,*}^\infty$. 

The unit and augmentation condition on $A$ implies that the class $u$ survives to the $E^\infty$-page. 
By the $k[u]$-module structure of the spectral sequence, all higher differentials vanish. So $E^2=E^\infty$ in general.

By the definition of the filtration $F_sT_*$ of $T_*=\Cneg_* (A)$ from section \ref{spectral_sequence} and 
the way the periodicity class $u$ acts, one sees that the algebra homomorphism $\psi$ respects the filtrations in the sense that
$\psi (u^i\ell (A)) \subseteq F_{-i}T_*$. The induced map of associated graded objects is an isomorphism since it agrees
with the isomorphism described in Proposition \ref{graded_object_special}. By Lemma \ref{finite_filtration} the 
filtration of $\ell (A)$ is exhaustive, Hausdorff and complete in the sense of \cite{Boardman}. So is the filtration of 
$HC_*^- (A)$ since the spectral sequence converges strongly. By \cite{Boardman} Theorem 2.6 we see that 
$\psi$ is an isomorphism. (By Proposition \ref{graded_object_special} we also get the exactness of the upper row in 
the diagram directly).
\end{proof}

\begin{remark}
When defining the homomorphism $\psi : \ell (A) \to HC_*^- (A)$, the image class of the generator $\phi (a)$ might 
not be immediate to guess. This image class is however a square in $\Cneg_* (A)$, since by 
(\ref{small_product_formula}) one has
\[ \multneg(1\otimes a[] \otimes 1\otimes a[])=
1\otimes a^2[]+u\otimes 1[a|a]. \]
\end{remark}

\begin{remark}
Basic examples of smooth algebras over $k=\FF_2$ are polynomial algebras. If $V$ is a non-negatively graded 
vector space of finite type, then the symmetric algebra $S(V)$ is smooth. Other examples are finite fields of order
$2^r$, $r>0$ which are smooth according to \cite{Loday} Example 3.4.3.
\end{remark}

\section{Cyclic homology and periodic cyclic homology}

In this section we describe approximation functors $\ellpos$ for ordinary cyclic homology and $\ellper$ for
periodic cyclic homology. Let $k=\FF_2$ and let $A=A^*$ be a non-negatively graded commutative and unital $k$-algebra.
In particular, $A$ may be concentrated in degree zero such that it is an ungraded $k$-algebra. We first consider 
cyclic homology. 

\begin{definition}
Let $\ellpos (A)$ be the free $\ell (A)$-module on generators $\gamma (a)$ for $a\in A$ and $v^i$ for $i= 0, 1, 2, \dots$
modulo the following relations:
\begin{align}
& \gamma (a+b) = \gamma (a)+\gamma (b), \label{rp1} \\
& \phi (a) \gamma (b) = \gamma (a^2 b), \label{rp2} \\
& q(a) \gamma (b) = \delta (a) \gamma (ab), \label{rp3} \\
& \delta (a)\gamma (b) = \gamma (a) \delta (b), \label{rp4} \\
& \gamma (a) \delta (bc)+\gamma (ab) \delta (c) + \gamma (ac) \delta (b) = 0, \label{rp5} \\
& u\gamma (a)=0, \label{rp6} \\
& \delta (a) v^i =0, \quad i\geq 0, \label{rp7} \\
& uv^i = v^{i-1}, \quad i\geq 1, \label{rp8} \\
& \gamma (1) = v^0 \label{rp9} 
\end{align}
The upper degrees of the generators are $|\gamma (a)|=|a|$, $|v^i|=-2i$. The homological degrees
are $||\gamma (a)||=0$, $||v^i||=2i$.
\end{definition}

\begin{remark}
\label{ellpos_functor}
$\ellpos$ is a functor. If $f:A\to B$ is an algebra homomorphism then $\ellpos (f): \ellpos (A) \to \ellpos (B)$ is the 
$k$-linear map defined by $\gamma (a) \mapsto \gamma (f(a))$, $v^i\mapsto v^i$ and the requirement that
$\ellpos (f)$ is $\ella$-linear when the action on $\ell (B)$ is via the algebra homomorphism 
$\ell (f) : \ell (A) \to \ell (B)$.
\end{remark}

\begin{remark}
The de Rham complex $\drca$ is an $\ella$-module via the natural algebra homomorphism
$r: \ella \to \drca$. This module is generated by $\Omega_{A|k}^0=A$ since $r(\delta (a))=da$.
One sees that $d\Omega_{A|k}^{*-1} \subseteq \drca$ is an $\ella$-submodule by the relations
\begin{align*}
& \phi (a) \cdot db = a^2 db = d(a^2 b), & q(a) \cdot db = adadb = d(ab)da, \\
& \delta (a) \cdot db = dadb, & \quad u\cdot db=0. 
\end{align*}
\end{remark}

\begin{proposition}
\label{lposa_iso}
There is a natural $\ella$-linear map 
\[ I: \drca \to \ellposa ; \quad a_0 da_1 \dots da_n \mapsto \gamma (a_0) \delta (a_1) \dots \delta (a_n) . \]
One has $\Ker (I) = d\Omega_{A|k}^{*-1}$ such that $\drca /  d\Omega_{A|k}^{*-1} \cong \image (I)$.
\end{proposition}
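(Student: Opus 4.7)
The plan proceeds in three stages: construct $I$, verify $\ella$-linearity, and compute $\Ker I$ by producing an explicit inverse to the induced map $\bar I:\drca/d\Omega_{A|k}^{*-1}\to\image(I)$.

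First, the formula $a_0\,da_1\cdots da_n\mapsto\gamma(a_0)\delta(a_1)\cdots\delta(a_n)$ is well-defined on $\drca$: multilinearity in each $a_i$ uses (\ref{rp1}) and (\ref{r2}); the relation $(da)^2=0$ is (\ref{r7}); graded-commutativity in the $da_i$ (which over $\FF_2$ is ordinary commutativity) is automatic since $\ella$ is commutative; and the Leibniz rule $d(bc)=b\,dc+c\,db$ is exactly the content of (\ref{rp5}). For $\ella$-linearity, recall that $\ella$ acts on $\drca$ through the algebra homomorphism $r$ of Proposition~\ref{proposition_r}; it therefore suffices to check on the four types of generators $\phi(a),q(a),\delta(a),u$ of $\ella$. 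These cases are handled by (\ref{rp2}), (\ref{rp3}), (\ref{rp4}) (together with commutativity to rearrange $\delta(a)\gamma(a_0)=\gamma(a_0)\delta(a)$), and (\ref{rp6}), respectively.

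The inclusion $d\Omega_{A|k}^{*-1}\subseteq\Ker I$ is immediate, since the generator $d(a_0\,da_1\cdots da_{n-1}) = da_0\,da_1\cdots da_{n-1}$ is sent to $\gamma(1)\delta(a_0)\cdots\delta(a_{n-1}) = v^0\,\delta(a_0)\cdots\delta(a_{n-1}) = 0$ by (\ref{rp9}) and (\ref{rp7}). Hence $I$ factors through $\bar I$. For the reverse inclusion, I will construct a $k$-linear inverse $J:\image(I)\to\drca/d\Omega_{A|k}^{*-1}$ by setting $J\bigl(\gamma(a_0)\delta(a_1)\cdots\delta(a_n)\bigr):=[a_0\,da_1\cdots da_n]$.

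The main obstacle is well-definedness of $J$, which amounts to showing that every relation among the spanning set of $\image(I)$ is sent by $[a_0\,da_1\cdots da_n]$ into $d\Omega_{A|k}^{*-1}$. The relations coming from multilinearity, $\delta(a)^2=0$, commutativity of the $\delta$'s, and the Leibniz-type identity (\ref{rp5}) transfer to identities that already hold in $\drca$ itself (not merely modulo $d\Omega_{A|k}^{*-1}$). The genuinely non-trivial case is the swap produced by (\ref{rp4}) combined with commutativity of $\ella$: $\gamma(a_0)\delta(a_1)\delta(a_2)\cdots\delta(a_n) = \gamma(a_1)\delta(a_0)\delta(a_2)\cdots\delta(a_n)$. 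Under $J$ this demands $[a_0\,da_1\,da_2\cdots da_n] = [a_1\,da_0\,da_2\cdots da_n]$, which holds modulo $d\Omega_{A|k}^{*-1}$ precisely because
\[
a_0\,da_1\,da_2\cdots da_n + a_1\,da_0\,da_2\cdots da_n = d(a_0 a_1)\,da_2\cdots da_n = d\bigl(a_0 a_1\,da_2\cdots da_n\bigr),
\]
using $d^2=0$ on the tail $da_2\cdots da_n$. Once $J$ is well-defined, $J\circ\bar I$ is the identity on generators, so $\bar I$ is injective; combined with the easy inclusion this forces $\Ker I = d\Omega_{A|k}^{*-1}$ and yields the isomorphism $\drca/d\Omega_{A|k}^{*-1}\cong\image(I)$.
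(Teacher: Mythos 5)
Your proposal is correct and follows essentially the same route as the paper: well-definedness of $I$ via the additivity relations, $\delta(a)^2=0$ and (\ref{rp5}); $\ella$-linearity checked on the generators $\phi(a)$, $q(a)$, $\delta(a)$, $u$; and injectivity of the induced map via an explicit inverse $J(\gamma(a_0)\delta(a_1)\cdots\delta(a_n))=[a_0\,da_1\cdots da_n]$, whose only non-trivial well-definedness check is the relation (\ref{rp4}), handled exactly as in the paper by $a_0\,da_1+a_1\,da_0=d(a_0a_1)\in d\Omega_{A|k}^{*-1}$.
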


\begin{proof}
We first verify that $I$ is well-defined. We do have a well-defined map 
\[ I: A\otimes \Lambda^* (A) \to \ellpos (A) \]
since $\gamma (a+b)=\gamma (a)+\gamma (b)$, $\delta (a+b)=\delta (a)+\delta (b)$ and $\delta (a)^2 = 0$.  
In order to see that it factors through $\Omega_{A|k}^*$ it suffices to show that the element
\[ a_0 d(a_1^\prime a_1^{\prime \prime}) da_2 \dots da_n
+a_0 a_1^\prime da_1^{\prime \prime} da_2 \dots da_n + 
a_0a_1^{\prime \prime} da_1^\prime da_2 \dots da_n \]
is mapped to zero. But this is OK by the relation (\ref{rp5}).

Next we see that $I$ is $\ell (A)$-linear. Let $a$, $b$ and $b_1, \dots , b_n$ be elements of $A$. Put 
$\omega = db_1 \dots db_n$ and $\overline \omega = \delta (b_1) \dots \delta (b_n)$. Then we have
\begin{align*}
& I(\phi (a)\cdot b\omega ) = I(a^2b\omega ) = \gamma (a^2 b)\overline \omega =
\phi (a) \gamma(b) \overline \omega = \phi (a) \cdot I(b\omega ), \\
& I(q(a) \cdot b\omega ) = I(ada\medspace b\omega ) = \gamma (ab) \delta (a) \overline \omega = 
q(a) \gamma (b) \overline \omega = q(a) \cdot I(b\omega ), \\
& I(\delta (a) \cdot b\omega ) = I(da\medspace b\omega ) = \delta (a) \gamma (b) \overline \omega =
\delta (a) \cdot I(b\omega ), \\
& I(u\cdot b\omega)=I(0\cdot b\omega ) = 0 =u\cdot I(b\omega).
\end{align*}
where we used the relations (\ref{rp2}), (\ref{rp3}) and (\ref{rp6}).
Having established the $\ella$-linearity, we see that $I$ is simply given by $I(a)=\gamma (a)$ for $a\in A$.

Regarding the kernel, note that $d\Omega_{A|k}^{*-1} \subseteq \Ker (I)$ since $\delta (a)\gamma (1) =0$
by relation (\ref{rp4}). So we have a surjective homomorphism 
$\overline I : \drca / d\Omega_{A|k}^{*-1} \to \image (I)$. 
It suffices to show that it has an inverse $J$, which is necessarily given by $J(\gamma (a))=a$ for $a\in A$.

The submodule $\image (I)$ is the free $\ella$-module generated by $\gamma (a)$ for $a\in A$
modulo the relations (\ref{rp1})--(\ref{rp6}). We must check that $J$ respects these relations. 
This is done by direct verification:
\begin{align*}
& J(\gamma (a+b))=a+b=J(\gamma (a)+\gamma (b)), \\
& J(\phi (a) \gamma (b)) = \phi (a) \cdot b =a^2 b = J(\gamma (a^2b)), \\
& J(q(a)\gamma (b)) = q(a)\cdot b = ada\medspace b = \delta (a) \cdot ab = J(\delta (a)\gamma (ab)), \\
& J(\delta (a)\gamma (b))= \delta (a)\cdot b = da\medspace b \equiv a\medspace db  = a\cdot \delta (b) = 
J(\gamma (a) \delta (b)),\\
& J(\gamma (a) \delta (bc)+\gamma (ab)\delta (c)+\gamma (ac) \delta (b)) = 
a\medspace d(bc)+ab \medspace d(c)+ac\medspace d(b)=0,\\
& J(u\gamma (a))=u\cdot a = 0.
\end{align*}
Thus $\overline I$ is an isomorphism.
\end{proof}

By the definition of the functor $\ellpos$ we directly get the following result:
\begin{proposition}
There is a natural filtration
\[ 0=F_{-1}\ellposa \subseteq F_{0} \ellposa \subseteq F_1 \ellposa \subseteq \dots \subseteq \ellposa \]
where the $\ella$-submodules for $s\geq 0$ are defined by
\[ F_s\ellposa = \ella \langle \gamma (a), \medspace v^i \medspace | \quad a\in A, \medspace i \leq s \rangle .\] 
The subquotients are given by
\[ F_0\ellposa = \image (I) \text{ and } 
F_s\ellposa / F_{s-1}\ellposa = \widetilde \Omega (A) \langle v^s \rangle , \quad s\geq 1
\]
where $\widetilde \Omega (A) = \ella / (I_\delta (A)+(u))$ (which is isomorphic to $\drca$ if one disregards the 
grading).
\end{proposition}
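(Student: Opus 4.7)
The plan is to unpack the presentation of $\ellposa$ directly. The inclusions $F_{s-1}\subseteq F_s$ are automatic from the definition, and the filtration is exhaustive since each element of $\ellposa$ involves only finitely many $v^i$'s. For $F_0$, relation (\ref{rp9}) identifies $v^0$ with $\gamma(1)$, so $F_0$ is generated as an $\ella$-module by the symbols $\gamma(a)$ alone, and Proposition \ref{lposa_iso} identifies it with $\image(I)$.

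For $s\geq 1$, the quotient $F_s/F_{s-1}$ is $\ella$-generated by the class $\bar v^s$, since the remaining generators of $F_s$ all lie in $F_{s-1}$. Relation (\ref{rp7}) forces $\delta(a)\bar v^s=0$, and relation (\ref{rp8}) forces $u\bar v^s=\overline{v^{s-1}}=0$, so the action of $\ella$ on $\bar v^s$ factors through the quotient ring $\widetilde\Omega(A)=\ella/(I_\delta(A)+u\ella)$. This yields a surjective $\ella$-linear map $\psi_s\colon\widetilde\Omega(A)\to F_s/F_{s-1}$, $\bar\alpha\mapsto\alpha\bar v^s$.

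The main obstacle is the injectivity of $\psi_s$. I would address it by giving an explicit description of the ``$v$-tower''. Because $\delta(a)v^i=0$, the $\ella$-submodule $V\subseteq\ellposa$ generated by all the $v^i$ is in fact a $\widetilde\ell(A)$-module; relation (\ref{rp8}), together with $uv^0=u\gamma(1)=0$ from (\ref{rp6}) and (\ref{rp9}), presents $V$ as the $\widetilde\ell(A)$-module freely generated by symbols $f_i$ ($i\geq 0$) modulo $uf_0=0$ and $uf_{i+1}=f_i$, which is easily seen to be isomorphic to $\widetilde\ell(A)[u^{-1}]/\widetilde\ell(A)$ via $f_i\leftrightarrow u^{-(i+1)}$. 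Using Theorem \ref{theorem_deformation} to write $\widetilde\ell(A)\cong\widetilde\Omega(A)[u]$ as $\widetilde\Omega(A)$-modules, we get $V\cong\bigoplus_{i\geq 1}\widetilde\Omega(A)u^{-i}$, in which $F_s\cap V$ corresponds to $\bigoplus_{i=1}^{s+1}\widetilde\Omega(A)u^{-i}$ and the subquotient reads off as $\widetilde\Omega(A)u^{-(s+1)}\cong\widetilde\Omega(A)\langle v^s\rangle$. Since the overlap $V\cap\image(I)$ sits inside $F_0$, this computation gives $F_s/F_{s-1}\cong\widetilde\Omega(A)\langle v^s\rangle$ for all $s\geq 1$.
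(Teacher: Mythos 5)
Your overall plan---reduce to the injectivity of $\psi_s$ and control the ``$v$-tower''---is the right instinct (the paper in fact offers no proof at all here, asserting the statement follows directly from the definition), but the pivotal step is wrong as stated: the $\ella$-submodule $V\subseteq \ellposa$ generated by the $v^i$ is \emph{not} presented over $\tilde\ell(A)$ by the relations $uf_0=0$ and $uf_{i+1}=f_i$. A submodule of a presented module is not presented by listing those defining relations that happen to involve only its chosen generators; here relation (\ref{rp9}) identifies $v^0$ with $\gamma(1)$, so every relation of $\ellposa$ involving $\gamma(1)$ is imported into $V$ as a relation on $v^0$. Concretely, for $A=\FF_2[\epsilon]/(\epsilon^2)$ relations (\ref{rp2}) and (\ref{rp1}) give $\phi(\epsilon)v^0=\phi(\epsilon)\gamma(1)=\gamma(\epsilon^2)=\gamma(0)=0$, whereas $\phi(\epsilon)$ is nonzero in $\widetilde\Omega(A)$ and hence $\phi(\epsilon)f_0\neq 0$ in your presented module $\tilde\ell(A)[u^{-1}]/\tilde\ell(A)$. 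So the isomorphism $V\cong\bigoplus_{i\geq 1}\widetilde\Omega(A)u^{-i}$ is false in general, and the identification of $F_s\cap V$ that you read off from it is unjustified.

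The conclusion for $s\geq 1$ does survive, because all the extra relations forced on the tower land in $\widetilde\Omega(A)v^0\subseteq F_0\ellposa$---but that is precisely what needs to be proved rather than assumed. The clean repair is the device the paper uses later in the proof of Lemma \ref{starstar}: introduce the auxiliary module $M=M_\gamma\oplus M_v$ with the same generators and relations except that (\ref{rp9}) is replaced by $uv^0=0$. In $M_v$ the unique normal form $\sum_i z_i v^i$ with $z_i\in\drca$ \emph{does} hold (via Theorem \ref{theorem_deformation}), and $\ellposa=M/N$ where $N$ is the image of $\vnulla\to M$, $v^0\mapsto(\gamma(1),v^0)$. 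Since $N$ is contained in $M_\gamma\oplus\widetilde\Omega(A)v^0$, it is absorbed into the preimage of $F_{s-1}\ellposa$ for every $s\geq 1$, and the second isomorphism theorem then yields $F_s\ellposa/F_{s-1}\ellposa\cong\widetilde\Omega(A)\langle v^s\rangle$ from the normal form in $M_v$ alone. Your treatment of the inclusions, of exhaustiveness, of $F_0\ellposa=\image(I)$, and of the surjectivity of $\psi_s$ is fine.
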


We have a chain complex which models Connes' long exact sequence.
\begin{proposition}
\label{ellConnes}
There is a natural chain complex of $\ella$-modules 
\[ \xymatrix@C=1.0 cm{
\dots \ar[r] & \drca \ar[r]^-{I} & \ellposa \ar[r]^-{\cdot u} & \ellposa \ar[r]^-{D} & \Omega_{A|k}^{*-1} \ar[r] & \dots  
} \]
where the map $D$ is defined by $D(\gamma (a))=da$ and $D(v^i)=0$. At the de Rham modules one has 
$\Ker (I) = \image (D)$. Furthermore, $D\circ I=d$.
\end{proposition}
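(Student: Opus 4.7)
The plan is to define $D$ on the $\ella$-module generators of $\ellposa$ by $D(\gamma(a)) = da$ and $D(v^i) = 0$, extending it $\ella$-linearly, where $\Omega_{A|k}^{*-1}$ inherits its $\ella$-structure from the ring homomorphism $r : \ella \to \drca$ of Proposition \ref{proposition_r}. Once well-definedness is established, the chain-complex property, the formula $D \circ I = d$, and the exactness at the de Rham modules will all follow from short computations combined with Proposition \ref{lposa_iso}.

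For well-definedness I will check that each of (\ref{rp1})--(\ref{rp9}) is respected by $D$. Relations (\ref{rp1}), (\ref{rp6})--(\ref{rp9}) are immediate from $r(u) = 0$ and $d(1)=0$. Relation (\ref{rp2}) uses $d(a^2) = 0$ in characteristic $2$. Relations (\ref{rp3}) and (\ref{rp4}) reduce to Leibniz and graded commutativity of $\drca$, exploiting $(da)^2 = 0$ and the loss of signs. The main bookkeeping is (\ref{rp5}): the image expands into six wedges which, when grouped by $A$-coefficient, form three pairs each vanishing in characteristic $2$ (two by $2x = 0$, one by graded commutativity in char $2$).

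For the chain-complex property, $(\cdot u) \circ I = 0$ holds because $u$ kills every $\gamma(a)$ by (\ref{rp6}) and every $\delta(a)$ by (\ref{r11}); $D \circ (\cdot u) = 0$ holds because $r(u) = 0$, so $u$ acts trivially on $\drca$; and $I \circ D = 0$ reduces on generators to $I(da) = v^0 \delta(a) = 0$ via (\ref{rp7}). The identity $D \circ I = d$ is direct from
\begin{equation*}
D(\gamma(a_0)\delta(a_1)\cdots\delta(a_n)) = r(\delta(a_1)\cdots\delta(a_n))\cdot da_0 = d(a_0\, da_1\cdots da_n),
\end{equation*}
since $d(a_0\, \omega) = da_0\cdot \omega$ whenever $\omega$ is closed. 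Finally, Proposition \ref{lposa_iso} identifies $\Ker(I)$ with $d\Omega_{A|k}^{*-1}$; the inclusion $d\Omega_{A|k}^{*-1} \subseteq \image(D)$ is $D \circ I = d$ applied to arbitrary forms. For the reverse inclusion, every $x\in\ellposa$ decomposes as a finite $\ella$-combination of $\gamma$- and $v$-generators, so $D(x)$ is a sum of terms $r(y)\cdot da_0$ with $y \in \ella$; since $d \circ r = 0$ the Leibniz rule gives $r(y)\cdot da_0 = d(a_0\cdot r(y))$, which lies in $d\Omega_{A|k}^{*-1}$. The main obstacle is the relation-by-relation verification for $D$, especially (\ref{rp5}), which is delicate but collapses cleanly by characteristic-$2$ arithmetic.
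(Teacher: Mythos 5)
Your proposal is correct and follows essentially the same route as the paper: define $D$ on the module generators, verify relations (\ref{rp1})--(\ref{rp9}) land on valid identities in $\drca$ (with (\ref{rp5}) expanding to six terms that cancel in characteristic $2$), check the composites vanish on generators, and deduce exactness at the de Rham modules from Proposition \ref{lposa_iso} together with $D\circ I=d$. Your explicit Leibniz argument showing $\image(D)\subseteq d\Omega_{A|k}^{*-1}$ is a small detail the paper leaves implicit (it surfaces only in the proof of the following theorem), but it is the same underlying observation that $d\Omega_{A|k}^{*-1}$ is an $\ella$-submodule containing the images of the generators.
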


\begin{proof}
First we see that $D$ is well-defined and $\ella$-linear. We must check that each of the relations
(\ref{rp1})-(\ref{rp9}) are mapped to valid relations in the de Rham complex. The corresponding
valid relations for (\ref{rp1})-(\ref{rp5}) are 
$d(a+b)=da+db$, $a^2db=d(a^2 b)$, $adadb=da\medspace d(ab)$, $dadb=dbda$ and
$da\medspace d(bc) +db\medspace d(ac) + dc\medspace d(ab) = 0$. The remaining corresponding 
relations are trivial.

Next we check that the relevant composites are trivial on module generators such that we have a chain complex.
Firstly, $u \cdot I(a) = u\gamma (a) = 0$ by (\ref{rp6}). Secondly,    
$D(u\gamma (a))=D(0)=0$ and $D(uv^i)=D(v^{i-1})=0$ for $i\geq 1$ by (\ref{rp8}). Thirdly,  
$I(D(\gamma (a)))= I(da)= 0$ and $I(D(v^i))=I(0)=0$. The partial exactness statement follows by Proposition \ref{lposa_iso} above. Finally,
\[ D\circ I (a_0da_1\dots da_n)=D(\gamma (a_0)\delta (a_1) \dots \delta (a_n))= da_0 da_1 \dots da_n. \]
\end{proof}

The chain complex becomes exact when the Cartier map is an isomorphism as we will now show.

\begin{lemma}
\label{starstar}
Assume that the Cartier map $\Phi : \drca \to H_{DR}(A)$ is an isomorphism.
Let $\vnulla$ be the $\ella$-module generated by the symbol $v^0$ subject to the relations 
$uv^0=0$ and $\delta(a)v^0=0$. Then the map
$\vnulla \to \ellposa$ given by  $v^0\mapsto v^0$ is injective.
\end{lemma}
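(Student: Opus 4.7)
The strategy is to identify both $V_0(A)$ and the image of the map in $\ellposa$ with the de Rham cohomology $H_{DR}(A)$, using the Cartier hypothesis and the results already established.

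First, I verify well-definedness by checking that $uv^0 = 0$ and $\delta(a)v^0 = 0$ hold in $\ellposa$. Relation (\ref{rp9}) gives $v^0 = \gamma(1)$; then $uv^0 = u\gamma(1) = 0$ by (\ref{rp6}), and $\delta(a)v^0 = \delta(a)\gamma(1) = \gamma(a)\delta(1) = 0$ by (\ref{rp4}) combined with $\delta(1) = 0$ from Remark \ref{remark:ell}. Hence $v^0 \mapsto v^0$ extends to an $\ella$-linear map $\vnulla \to \ellposa$, whose image is the cyclic submodule $\ella \cdot v^0$.

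On the source side, $\vnulla = \ella / (u\ella + I_\delta(A)) = \Lfa / \Bfa$, which is precisely the quotient denoted $\widetilde \Omega (A)$ in the proof of Theorem \ref{theorem_ker(d)}. The rightmost vertical arrow of the commutative diagram there is then an isomorphism $\vnulla \to H_{DR}(A)$ sending $[x]\cdot v^0$ to $[r(x)]$. On the target side, $v^0 = \gamma(1) = I(1)$, and the $\ella$-linearity of $I$ (Proposition \ref{lposa_iso}) yields $\ella \cdot v^0 = I(\ella \cdot 1) = I(r(\ella))$, the last equality holding because the $\ella$-action on $\drca$ factors through $r$. Theorem \ref{theorem_ker(d)} gives $r(\ella) = \Ker d$, while Proposition \ref{lposa_iso} gives $\Ker I = d\Omega_{A|k}^{*-1}$, so $I$ descends to an isomorphism $H_{DR}(A) = \Ker(d)/\image(d) \cong I(\Ker d) = \ella \cdot v^0$.

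Putting the pieces together, the map in question factors as $\vnulla \cong H_{DR}(A) \cong \ella\cdot v^0 \hookrightarrow \ellposa$, proving injectivity. The one thing to verify explicitly is that this composition reproduces the original map: on $[x]\cdot v^0$ the first isomorphism gives $[r(x)]$, and the second sends $[r(x)]$ to $I(r(x)) = x \cdot I(1) = x \cdot v^0$ by $\ella$-linearity of $I$, which is indeed the image of $[x]\cdot v^0$ under $v^0\mapsto v^0$. No substantive obstacle is anticipated; the argument is a bookkeeping assembly of the already-recorded isomorphisms.
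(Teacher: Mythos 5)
Your proof is correct, but it takes a genuinely different route from the paper's. The paper works with the auxiliary module $M$ obtained by dropping relation (\ref{rp9}), splits it as $M_\gamma \oplus M_v$, proves separately that $i_v:\vnulla\to M_v$ is injective (via the unique representation of elements of $M_v$ coming from Theorem \ref{theorem_deformation}) and that $i_\gamma:\vnulla\to M_\gamma$ is injective (via a map $C:M_\gamma\to \drca/d\Omega^{*-1}_{A|k}$ and the Cartier isomorphism), and then concludes from the short exact sequence $0\to\vnulla\to M_\gamma\oplus M_v\to\ellposa\to 0$. You instead stay entirely inside $\ellposa$ and identify the cyclic submodule $\ella\cdot v^0$ with $H_{DR}(A)$ by combining $\Ker(I)=d\Omega^{*-1}_{A|k}$ from Proposition \ref{lposa_iso} with $r(\ella)=\Ker(d)$ and $\vnulla\cong\Lfa/\Bfa\cong H_{DR}(A)$ from Theorem \ref{theorem_ker(d)}; your verification that the composite $\vnulla\cong H_{DR}(A)\cong\ella\cdot v^0$ really is $v^0\mapsto v^0$ is the right thing to check and is done correctly. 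Your argument is shorter and is a legitimate assembly of previously stated results; what the paper's longer argument buys is independence from the presentation of $\image(I)$ asserted (with little justification) in the proof of Proposition \ref{lposa_iso} --- indeed the injectivity of $M_\gamma\to\ellposa$, which underlies the claim that $\image(I)$ is cut out by (\ref{rp1})--(\ref{rp6}) alone, is exactly what the paper's $M_\gamma\oplus M_v$ decomposition together with the injectivity of $i_v$ establishes. So your proof leans more heavily on Proposition \ref{lposa_iso} than the paper's does, but since that proposition precedes the lemma and is stated without the Cartier hypothesis, there is no circularity and no gap.
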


\begin{proof}
We would like to decompose $\ellposa$ into simpler $\ella$-submodules but the last relation mixes the two different
types of generators. So let $M$ be the $\ella$-module with the same generators as $\ellposa$
satisfying the same relations \emph{except} for (\ref{rp9}), which we replace
by the relation $uv^0=0$.  

There is a surjective map $M\to \ellposa$ and a decomposition of $\ell (A)$-modules 
$M =M_\gamma \oplus M_v$ where $M_\gamma$ is the submodule generated by $\gamma(a)$ for $a\in A$,
and $M_v$ is the submodule generated by $v^i$ for $i\geq 0$. 

We will now show that there are injective $\ella$-linear maps 
$i_v : \vnulla \to M_v$ defined by $i_v(v^0) = v^0$ and 
$i_\gamma : \vnulla \to M_\gamma$ defined by $i_\gamma (v^0) = \gamma (1)$.

By Theorem \ref{theorem_deformation} we see that $\vnulla \cong \drca$ and that
an element of $M_v$ can be written in a unique way as a finite sum 
$\sum_{i\geq 0} z_i v^i$ where $z_i\in \drca$. So we can define a left inverse 
(which is $k$-linear, but not $\ella$-linear) by $\sum_{i\geq 0} z_iv^i \mapsto z_0v^0$.
Thus $i_v$ is injective.

The map $i_\gamma$ is well-defined since $u\gamma (1)=0$ 
and $\delta(a)\gamma(1)=\delta(1)\gamma(a)=0$ because $\delta (1) = 0$. 

Recall that $\drca$ is an $\ella$-module via the map $r$ from Proposition \ref{proposition_r} and that 
the boundaries form a submodule $d\Omega_{A|k}^{*-1} \subset \drca$. 
We define an $\ella$-linear map $C:M_\gamma \to \drca /d\Omega_{A|k}^*$ by 
$C(\gamma(a))=a$. It is well-defined since
\begin{align*}
& C(\gamma (a+b) +\gamma (a)+\gamma (b))=a+b+a+b=0,\\
& C(\phi (a)\gamma (b)+\gamma (a^2b))=a^2b+a^2b=0,\\
& C(q(a)\gamma (b)+\delta (a)\gamma (ab))=(a da) b + (da)ab=0,\\
& C(\delta (a)\gamma (b)+\delta (b)\gamma (a))=(da) b + adb = d(ab) \equiv 0 , \\
& C(\gamma (a) \delta (bc)+\gamma (ab) \delta (c) + \gamma (ac) \delta (b))
=a\medspace d(bc)+ab\medspace d(c)+ac\medspace d(b)=0.
\end{align*}

We have $C\circ i_\gamma (v^0)=1$. In particular, the image of $C\circ i_\gamma$ is contained in the de Rham cycles so
it restricts to a map $C^\prime :\vnulla \to H_{DR}(A)$. Using $\ella$-linearity, we see that $C'(q(a)v_0)=[ada]$ and $C'(\phi (a) v_0)=[a^2]$. So $C^\prime$ corresponds to the Cartier map, and is hence an isomorphism. Thus, $i_\gamma$ is injective.

There is a short exact sequence
\[
0 \to \vnulla \xrightarrow{(i_\gamma,-i_v)} M_\gamma \oplus M_v \to \ellposa \to 0.
\]
To see this, note that the only new relation as we pass from 
$M\cong M_\gamma \oplus M_v$ to $\ellposa$ is $\gamma (1) = v^0$.
It follows from the exactness of the sequence together with the
injectivity of $i_v$ that the map $M_\gamma \to \ellposa$ is injective. So the composite 
$\vnulla \to M_\gamma \to \ellposa$, which maps $v^0$ to $v^0$, is also injective.
\end{proof}

\begin{theorem}
If the Cartier map $\Phi : \drca \to H_{DR}(A)$ is an isomorphism then the sequence  
\[ \xymatrix@C=1.0 cm{
\dots \ar[r] & \drca \ar[r]^-{I} & \ellposa \ar[r]^-{\cdot u} & \ellposa \ar[r]^-{D} & \Omega_{A|k}^{*-1} \ar[r] & \dots  
} \]
is long exact.
\end{theorem}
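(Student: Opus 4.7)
The plan is to leverage the filtration $F_s\ellposa$ and its subquotients from the preceding proposition, together with a key identification of the induced $u$-action on the first non-trivial subquotient with the Cartier map. Proposition \ref{ellConnes} already provides the chain-complex structure and the partial exactness $\Ker(I)=\image(D)$ at the de Rham positions, so only the two exactness statements at the $\ellposa$ positions remain.

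First I would analyze how multiplication by $u$ interacts with the filtration. Since $u\gamma(a)=0$ and $uv^i=v^{i-1}$, one has $u\cdot F_0=0$ and $u\cdot F_s\subseteq F_{s-1}$ for $s\geq 1$. On the subquotients $F_s/F_{s-1}\cong\widetilde{\Omega}(A)\langle v^s\rangle$ with $s\geq 2$, multiplication by $u$ is the identity map $\widetilde{\Omega}(A)\langle v^s\rangle\to\widetilde{\Omega}(A)\langle v^{s-1}\rangle$ under the standard identifications. The crucial case is $s=1$: using $F_1/F_0\cong\drca$ (via $\phi(a_0)q(a_1)\cdots q(a_n)v^1\leftrightarrow a_0\,da_1\cdots da_n$) and $F_0=\image(I)\cong\drca/d\Omega_{A|k}^{*-1}$ (Proposition \ref{lposa_iso}), I would compute $u\cdot\omega v^1=\omega\gamma(1)$ for $\omega=\phi(a_0)q(a_1)\cdots q(a_n)$ by iterating (\ref{rp3}) starting from $q(a)\gamma(1)=\delta(a)\gamma(a)$ and then applying (\ref{rp2}). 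The outcome is $\gamma(a_0^2a_1\cdots a_n)\delta(a_1)\cdots\delta(a_n)=I(a_0^2a_1\cdots a_n\,da_1\cdots da_n)$, whose class modulo $d\Omega_{A|k}^{*-1}$ is precisely $\Phi(a_0\,da_1\cdots da_n)$. Thus $u:F_1/F_0\to F_0$ coincides with the Cartier map $\Phi$; this identification is the main obstacle and the technical core of the proof.

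With this in hand the two exactness statements reduce to filtration bookkeeping. For $\Ker(\cdot u)=F_0=\image(I)$, suppose $\xi\in F_s\setminus F_{s-1}$ with $u\xi=0$. If $s\geq 2$ the induced map on subquotients is the identity, so $u\xi$ has non-zero class in $F_{s-1}/F_{s-2}$, a contradiction. If $s=1$ the class of $\xi$ in $F_1/F_0\cong\drca$ lies in $\Ker\Phi=0$ by the Cartier hypothesis, so $\xi\in F_0$; since $u\cdot F_0=0$ this gives the first exactness. For $\Ker(D)=\image(\cdot u)$ I would induct on the smallest $s$ with $\xi\in F_s$. Step $s\geq 1$: pick a representative $\omega\in\ella$ of $\xi\bmod F_{s-1}$ and set $\eta=\omega v^{s+1}$; then $\xi-u\eta\in F_{s-1}$, and $D(\xi-u\eta)=D\xi=0$, since $D$ is $\ella$-linear and $r(u)=0$. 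Base case $s=0$: $D|_{F_0}=d$ via $D\circ I=d$, so $D\xi=0$ places $\xi$ in $H_{DR}(A)\subseteq\drca/d\Omega_{A|k}^{*-1}$; the Cartier hypothesis writes $\xi=\Phi(\omega')$, and the identification of the previous paragraph gives $\xi=u(\widetilde{\omega'}v^1)$ for any lift $\widetilde{\omega'}\in\ella$.
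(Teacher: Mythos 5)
Your proof is correct, but it is organized quite differently from the paper's. The paper splits the two remaining exactness statements into separate arguments: for exactness at the source of $D$ it shows that the induced map $\bar D: \ellposa/u\ellposa \to \image(d)$ is an isomorphism by explicitly constructing an inverse, which requires knowing (via Theorem \ref{theorem_ker(d)}, hence the Cartier hypothesis) that $\Ker(d)$ is generated as an algebra by the elements $da$, $a\,da$ and $a^2$; for exactness at the target of $I$ it reduces to the injectivity of $\vnulla \to \ellposa$, $v^0\mapsto v^0$, proved in Lemma \ref{starstar} by splitting an auxiliary module as $M_\gamma\oplus M_v$ and identifying a map $C'$ on $\vnulla$ with the Cartier map. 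You instead run everything through the $v$-filtration: the single computation $u\cdot \omega v^1 = \omega\gamma(1) = I(\Phi_0(\omega))$, identifying $u: F_1/F_0 \to F_0$ with the Cartier map, together with the observation that $u$ is an isomorphism on the higher subquotients, yields both $\Ker(\cdot u)=\image(I)$ and (by induction down the filtration, using $D(u\eta)=r(u)D(\eta)=0$ and surjectivity of $\Phi$ in the base case) $\Ker(D)=\image(\cdot u)$. This is arguably more economical and makes the single role of the Cartier hypothesis more transparent; the computation $\omega\gamma(1)=\gamma(a_0^2a_1\cdots a_n)\delta(a_1)\cdots\delta(a_n)$ is essentially the same one hiding inside the paper's $C'$. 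The one thing to be aware of is that your argument leans on the preceding proposition identifying the subquotients $F_s\ellposa/F_{s-1}\ellposa$ with the free modules $\widetilde\Omega(A)\langle v^s\rangle$ (and $F_0$ with $\image(I)\cong \drca/d\Omega^{*-1}_{A|k}$), which the paper asserts "directly" but whose honest verification (no hidden collapsing in the quotient by the relations) amounts to the same $M_\gamma\oplus M_v$ plus Theorem \ref{theorem_deformation} analysis carried out in the proof of Lemma \ref{starstar}; if you cite that proposition and Proposition \ref{lposa_iso} explicitly, your proof is complete.
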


\begin{proof}
The boundaries $\image (d) \subseteq \drca$ form an $\ella$-submodule. Since $D$ maps the generators $\gamma (a)$
and $v^i$ into this submodule we see that $\image (D) \subseteq \image (d)$. 
The restriction $D: \ellposa \to \image (d)$ factors over a map $\bar D: \ellposa /u\ellposa \to \image (d)$ since we
have a chain complex. 
We claim that $\bar D$ is an isomorphism. 

We are going to define an inverse map. As a first step, consider the composite map 
$G:\Omega_{A|k} \xrightarrow{I} \ellposa \to \ellposa/u\ellposa$.
We claim that we can factor $G$  as 
\[ \xymatrix@C=1.0 cm{
G:\Omega_{A|k}^* \ar[r]^-{\pi} & \Omega_{A|k}^* /\Ker (d) \ar[r]^-{\bar G} & \ellposa /u\ellposa .
} \]

This factorization happens if and only if $G(z)=0$ for every $z\in \Ker(d)$. 
Because we are assuming that the Cartier map is an isomorphism, so is 
$\overline r: \Lfa \to \Ker (d)$ by Theorem \ref{theorem_ker(d)}. Thus, the ring $\Ker (d)$ is generated as an 
algebra by elements of the form $da$, $ada$ and $a^2$. So by the definition of $G$ it suffices for us to see that $G(da)=G(ada)=G(a^2)=0$.

We now note the relation $\gamma(1)=v^0=uv^1=0\in \ellposa/u\ellposa$.
The vanishing we want follows from this:  
$G(da)=\delta(a)\gamma(1)=0$,
$G(ada)=\delta(a)\gamma(a)=q(a)\gamma(1)=0$ and 
$G(a^2)= \gamma(a^2)=\gamma(a^2\cdot 1)=\phi(a)\gamma(1)=0$.
From the map $\bar G$ we obtain a map $\image (d) \to \ellposa /u\ellposa$, which is an inverse of $\bar D$. 

We are now ready to prove that the sequence of the lemma is exact. 
We have to check exactness at three modules.
 
\emph{Exactness at the source of $D$:}
This follows since $\bar D$ is an isomorphism.

\emph{Exactness at $\Omega_{A| k}^*$:} 
This is part of Proposition \ref{ellConnes}.

\emph{Exactness at the target of  $I$:} 
We need to prove that the map 
\[ \xymatrix@C=1.0 cm{
U: \ellposa /\image (I) \ar[r]^-{\cdot u} & \ellposa
} \]
is injective. Let $\pi: \ellposa \to \ellposa/\image(I)$ be the canonical projection. We write $w^i=\pi (v^i)$.
The image of $I$ is the $\ella$-submodule of $\ellposa$ generated by the classes $\gamma (a)$,
so that $\ellposa /\image (I)$ is the $\ella$-module generated by
the classes $w^i$ for $i\geq 0$, satisfying the relations
$\delta (a)w^i=0$, $uw^i=w^{i-1}$ and $w^0=0$. That is, 
$\ellposa /\image (I)$ is the $\tilde \ell (A)=\ella /I_\delta(A)$-module generated by
$w^1, w^2, \dots$ subject to the relations $uw^i=w^{i-1}$ for $i\geq 2$ and $uw^1=0$.

It follows from Theorem \ref{theorem_deformation} that an element in 
$\ellposa /\image (I)$ of homogeneous homological degree can be written in a unique way as a sum 
$\sum_{i=1}^{j} z_i w^i$ where $z_i\in \drca$. The composite map $\pi \circ U$
sends this sum to $\sum_{i=2}^{j} z_i w^{i-1}$. So the kernel of $U$ is contained in the submodule 
$(\ellposa/\image(I))_1$ generated by $w^1$. This means that it is enough to show that the restriction
$U: (\ellposa/\image(I))_1 \to \ellposa$ is injective.

There is a surjective map 
$W:\vnulla \to (\ellposa /\image (I))_1$, given on generators by $W(v^0)=w^1$.
Now consider the composite
\[ \xymatrix@C=1.0 cm{
\vnulla \ar[r]^-{W} & (\ellposa /\image (I))_1 \ar[r]^-{U} & \ellposa .
} \]  
The generator $v^0$ goes to $v^0$, so this map agrees with the map 
of Lemma \ref{starstar}. The lemma says that this map is injective so we have the desired result. 
\end{proof}

Both $HH_*(A)$ and $HC_*(A)$ are $HC_*^{-}(A)$-modules as described in section \ref{sec:CHT}.
Via the natural algebra map $\ell (A) \to HC_*^{-}(A)$ they become $\ell (A)$-modules. The functor
$\ellpos$ is an approximation to cyclic homology in the following sense:

\begin{theorem}
There is a natural $\ell (A)$-linear map
\begin{align*} 
\psi^{+} : \ellpos (A) \to HC_*(A); \quad & \gamma (a) \mapsto a\in A= HC_0(A), \\ 
& v^i \mapsto u^{-i} \in HC_{2i}(A)
\end{align*}
which fits into a natural commutative diagram of $\ell (A)$-modules, where the lower row is Connes' exact 
sequence
\[
\xymatrix@C=1.0 cm{
\dots \ar[r] & \drca \ar[r]^-{I} \ar[d]^-{\epsilon} & \ellposa \ar[r]^-{\cdot u} \ar[d]^-{\psi^+} 
& \ellposa \ar[r]^-{D} \ar[d]^-{\psi^+} & \Omega_{A|k}^{*-1} \ar[r] \ar[d]^-{\epsilon} & \dots \\ 
\dots \ar[r] & HH_*(A) \ar[r]^-{I} & HC_*(A) \ar[r]^-{\cdot u} 
& HC_{*-2}(A) \ar[r]^-{\partial} & HH_{*-1}(A) \ar[r] & \dots
}\] 
If both $\epsilon : \drca \to HH_*(A)$ and the Cartier map $\Psi : \drca \to H_{DR}(A)$ are isomorphisms 
then $\psi^+$ is an isomorphism. In consequence $\psi^+$ is an isomorphism when $A$ is 
a smooth algebra.
\end{theorem}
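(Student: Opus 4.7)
The plan is to define $\psi^+$ on the module generators of $\ellposa$, check the defining relations and the three squares on generators, and then run a five-lemma induction on homological degree.

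First, I would make $HC_*(A)$ into an $\ella$-module by composing the algebra homomorphism $\psi : \ella \to HC^-_*(A)$ from Theorem \ref{thm:lfunctorapprox} with the $HC^-_*(A)$-module structure $\mu^+$. Define $\psi^+$ on module generators by $\gamma(a)\mapsto a[]$ and $v^i\mapsto u^{-i}\otimes 1[]$, and extend $\ella$-linearly. Well-definedness reduces to verifying (\ref{rp1})--(\ref{rp9}) on the images. The relations (\ref{rp1}), (\ref{rp8}), (\ref{rp9}) are tautological, and (\ref{rp6}) follows because $u$ annihilates every class of the form $1\otimes c$ in $\Cpos_*(A)$. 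Relations (\ref{rp2})--(\ref{rp5}) follow by the product formula of Remark \ref{product_formula}; in each case the cyclic-shuffle summands drop out either because one factor begins with $1$ (the note at the end of that remark) or because they land in positive $u$-powers killed in $\Cpos_*(A)$. The only case needing extra thought is (\ref{rp7}): for $i\geq 1$ one has $\delta(a)\cdot v^i = u^{-i}\otimes 1[a] = \partial^+(u^{-i-1}\otimes a[])$, and for $i=0$ the product becomes $1[a]\in HH_1(A)$, which represents the exact form $da\in dA$ and hence vanishes in $HC_1(A)=HH_1(A)/dA$.

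Next, I would check commutativity on generators. The middle square is free from the $\ella$-linearity of $\psi^+$. For the left square, $\ella$-linearity gives $\psi^+(I(a_0\, da_1\cdots da_n)) = \psi(\delta(a_1)\cdots\delta(a_n))\cdot a_0[]$, and the shuffle description of the $HC^-_*$-action on $HH_*$ rewrites this as $\sum_{\sigma\in S(n)}\sigma\cdot a_0[a_1|\cdots|a_n]=\epsilon(a_0\, da_1\cdots da_n)$ included into $HC_*(A)$. For the right square, the snake-lemma formula for $\partial$ gives $\partial(a[])=1\otimes B(a[])=1[a]=\epsilon(da)$ on $\gamma(a)$, and $\partial(u^{-i})=1\otimes B(1[])=0=\epsilon(D(v^i))$ on $v^i$.

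For the isomorphism statement, assume that $\epsilon$ and the Cartier map are isomorphisms. Then both rows are exact (Connes below, the preceding theorem above), so we have a map of long exact sequences of $\ella$-modules. Both $\ellposa$ and $HC_*(A)$ vanish in negative homological degrees, and $\psi^+_0:A\to A$ is the identity, providing the base of a strong induction on the homological degree $k$. For the inductive step, the five-lemma applied to the chunk
\[ HC_{k-1}(A)\xrightarrow{\partial} HH_k(A)\xrightarrow{I} HC_k(A)\xrightarrow{\cdot u} HC_{k-2}(A)\xrightarrow{\partial} HH_{k-1}(A) \]
and its $\ellposa$-analogue has flanking vertical maps $\psi^+_{k-1},\epsilon_k,\psi^+_{k-2},\epsilon_{k-1}$, all isomorphisms by the inductive hypothesis and HKR, forcing $\psi^+_k$ to be an isomorphism. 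The smooth case then follows since HKR gives $\epsilon$ iso and the classical Cartier theorem gives the Cartier map iso in characteristic two. The principal obstacle is the first step: verifying the nine relations demands several explicit bar-complex boundary computations in the same spirit as those in the proof of Theorem \ref{thm:lfunctorapprox}, with careful bookkeeping of where $u$-powers and cyclic-shuffle terms collapse in $\Cpos_*(A)$.
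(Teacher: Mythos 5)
Your proposal is correct and follows essentially the same route as the paper: verify the nine relations via the product formula and explicit boundary computations in $\Cpos_*(A)$, check the squares on $\ella$-module generators, and conclude with the five-lemma using that homological degrees are bounded below. The only caveat is that for relations (\ref{rp4}) and (\ref{rp5}) the two sides do not literally coincide after applying Remark \ref{product_formula} --- one must exhibit explicit bounding chains such as $1\otimes 1[a|b]+u^{-1}\otimes ab[]$ --- but you flag this yourself as the remaining bookkeeping.
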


\begin{proof}
We must verify that (\ref{rp1})-(\ref{rp9}) are mapped to valid relations in cyclic homology. 
This is obvious for the relations (\ref{rp1}), (\ref{rp6}), (\ref{rp8}) and (\ref{rp9}). 

For (\ref{rp2}) we get the desired result since
\[ \mu^+ (1\otimes a^2[]+ u\otimes 1[a|a], 1\otimes b[]) = \mu^+ (1\otimes a^2[], 1\otimes b[]) =
1\otimes a^2b[]. \]

For (\ref{rp3}) we use the calculation
\[ \mu^+ (1\otimes a[a], 1\otimes b[]) = 1\otimes ab[a] = \mu^+ (1\otimes 1[a], 1\otimes ab[]). \]

For (\ref{rp4}) we must verify that $1\otimes (b[a]+a[b])$ is a boundary. It equals the boundary
$\partial^+ (1\otimes 1[a|b]+u^{-1}\otimes ab[])$.

For (\ref{rp5}) we must see that $1\otimes (a[bc]+ab[c]+ac[b])$ is a boundary. It equals
$\partial^+ (1\otimes a[b|c])$.

Finally, for (\ref{rp7}) we have $\mu^+ (1\otimes 1[a], u^{-i}\otimes 1[] )= u^{-i} \otimes 1[a]$ which
equals the boundary $\partial^+ (u^{-(i+1)}\otimes a[])$. 

Thus, we have a well-defined map $\psi^+$ as stated which is $\ell (A)$-linear by the definition 
of the $\ell (A)$-action on $HC_*(A)$. 

It suffices to check the commutativity of the diagram on $\ell (A)$-module generators. Both composites in the square 
to the left maps $a\in \Omega_{A|k}^0$ to $a\in HC_0(A)$. In the middle square both composites 
maps $\gamma (a)$ to zero and $v^i$ to $u^{-i+1}$ for $i\geq 1$. In the square to the right, recall 
that $B_*:HH_{*-2}(A)\to HH_{*-1}(A)$ factors through $HC_{*-2}(A)$ as $B_*=\partial \circ I$. We have 
$\psi^+ (\gamma (a)) =  1\otimes a[]= I(a[])$ such that $\partial (\psi^+ (\gamma (a))) = B_*(a[]) = 1[a]$.
But also $\epsilon (D(\gamma (a)))=\epsilon (da) = 1[a]$. The generator $v^i$ is mapped to zero by both composites.

If $\Psi$ is an isomorphism then the upper row of the diagram is also exact. 
The homological degrees of the modules are bounded below and the isomorphism statement follows by the 5-lemma.
\end{proof}

For periodic cyclic homology the situation is easier.
\begin{definition}
\label{def:ellper}
Let $\ellper (A)$ be the free commutative and unital $k$-algebra on generators 
$\phi (a)$, $q(a)$ for $a\in A$ and $u$, $u^{-1}$ modulo the following relations:
\begin{align}
& \phi (a+b) = \phi (a)+\phi (b), \\
& q(a+b) = q(a)+q(b), \\
& \phi (ab) = \phi (a) \phi (b)+uq(a)q(b), \\
& q(ab) = q(a)\phi (b)+ \phi (a) q(b), \\
& q(a)^2 = 0, \\
& uu^{-1} = 1, \\
& \phi (1) = 1. \label{rper7}
\end{align}
The upper degrees of the generators are $|\phi (a)| = 2|a|$, $|q(a)| = 2|a|-1$, $|u| = 2$ and $|u^{-1}| = -2$.
The homological degrees are $||\phi (a)|| = 0$, $||q(a)||=1$, $||u||=-2$ and $||u^{-1}||=2$.
\end{definition}

We have a chain complex which models the long exact sequence for periodic cyclic homology.
\begin{proposition}
There is a natural chain complex
\[
\xymatrix@C=1.0 cm{
\dots \ar[r] & \ella \ar[r]^-{\iota} & \ellpera \ar[r]^-{S} & \ellposa \ar[r]^-{\partial} & \ella \ar[r] & \dots  
}\] 
Here $\iota$ is the $k$-algebra homomorphism defined by 
\[ \delta (a) \mapsto 0, \quad \phi (a)\mapsto \phi (a), \quad q(a)\mapsto q(a), \quad u\mapsto u. \] 
Via this map $\ellpera$ becomes an $\ella$-module generated by $u^{-i}$ for $i\geq 1$. The map 
$S$ is $\ella$-linear and given by $S(u^{-i})=v^{i-1}$. 
The map $\partial$ is $\ella$-linear and given by $\partial (\gamma (a)) = \delta (a)$ and 
$\partial (v^i)=0$.
\end{proposition}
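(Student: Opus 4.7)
The plan is to verify the four ingredients in order: well-definedness of $\iota$, the generation claim for $\ellpera$, well-definedness of $S$ and $\partial$, and the three chain-complex identities.

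For $\iota$, I would run through the twelve defining relations (\ref{r1})--(\ref{r12}) of $\ella$. Because $\iota$ kills every $\delta$-generator, the relations (\ref{r2}), (\ref{r3}), (\ref{r4}), (\ref{r7}), (\ref{r9}), (\ref{r10}), (\ref{r11}) all collapse to $0=0$ in $\ellpera$. The remaining five relations (\ref{r1}), (\ref{r5}), (\ref{r6}), (\ref{r8}), (\ref{r12}) map identically onto corresponding relations in Definition \ref{def:ellper}, so $\iota$ is a well-defined $k$-algebra homomorphism; its image $\iota(\ella)$ is the subring of $\ellpera$ generated by $\phi(a), q(a), u$, which is isomorphic to $\tilde \ell(A)$.

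For the generation claim, every element of $\ellpera$ is a finite sum of monomials in $\phi(a), q(a), u, u^{-1}$, and each such monomial can be written as $\iota(\alpha) \cdot (u^{-1})^i$ for some $\alpha \in \ella$ and $i \geq 0$. Using $1 = \iota(u) \cdot u^{-1}$ to absorb the monomials with $i = 0$, the module $\ellpera$ is $\ella$-generated by $\{u^{-i}\}_{i \geq 1}$. The only $\ella$-module relations among these generators are $\iota(u) \cdot u^{-i} = u^{-(i-1)}$ for $i \geq 2$.

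To see that $S$ extends $\ella$-linearly, it suffices to check these relations: under the prescription $u^{-j} \mapsto v^{j-1}$ they become $u \cdot v^{i-1} = v^{i-2}$, which holds in $\ellposa$ by (\ref{rp8}). For $\partial$ I check directly that each relation (\ref{rp1})--(\ref{rp9}) defining $\ellposa$ gives a valid identity in $\ella$ under the substitutions $\gamma(a) \mapsto \delta(a)$, $v^i \mapsto 0$. The essential translations are (\ref{rp1})$\mapsto$(\ref{r2}), (\ref{rp2})$\mapsto$(\ref{r9}) (after swapping variable names and using commutativity), (\ref{rp3})$\mapsto$(\ref{r10}), (\ref{rp5})$\mapsto$(\ref{r4}), and (\ref{rp6})$\mapsto$(\ref{r11}); the remaining relations collapse to trivial identities, with (\ref{rp9}) using $\delta(1) = 0$ from Remark \ref{remark:ell}.

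Finally, I would verify the chain-complex identities on $\ella$-module generators: $\iota\partial(\gamma(a)) = \iota(\delta(a)) = 0$ and $\iota\partial(v^i) = 0$; and $\partial S(u^{-i}) = \partial(v^{i-1}) = 0$. For $S \circ \iota = 0$, I use the identity $\iota(\alpha) = \iota(\alpha u) \cdot u^{-1}$ in $\ellpera$, which gives $S(\iota(\alpha)) = \alpha u \cdot v^0 = \alpha \cdot (u\gamma(1)) = 0$ by (\ref{rp9}) and (\ref{rp6}). The main conceptual obstacle is pinning down the $\ella$-module presentation of $\ellpera$ precisely enough to know exactly which relations have to be checked for $S$; once that presentation is in hand, no substantive computation remains.
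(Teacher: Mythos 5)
Your proposal is correct and follows essentially the same route as the paper's (much terser) proof: verify that the defining relations are respected and then check that the composites vanish on module generators, with the key computation $S(\iota(1))=S(uu^{-1})=uv^0=u\gamma(1)=0$ being identical to the paper's (your extra care about the $\ella$-module presentation of $\ellpera$ and the well-definedness of $S$ and $\partial$ fills in detail the paper leaves implicit). One cosmetic slip: relation (\ref{r3}) does not collapse to $0=0$ under $\iota$ but to $q(a+b)=q(a)+q(b)$, which is nonetheless one of the defining relations of $\ellper (A)$, so nothing is affected.
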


\begin{proof}
The defining relations for $\ell (A)$ are obviously respected such that $\iota$ becomes a well-defined
algebra homomorphism. It remains to verify that the relevant composites are trivial on module generators. 
For $\iota \circ \partial$ and $\partial \circ S$ this follows directly.
For $S\circ \iota$ one has $S(\iota (1))=S(1)=S(uu^{-1})=uv^0=u\gamma (1)=0$.
\end{proof}

For periodic cyclic homology the approximation theorem looks as follows:
\begin{theorem}
There is a natural algebra homomorphism
\begin{align*} 
\psi^{per} : \ellper (A) \to HC^{per}_*(A); \quad & q(a) \mapsto 1\otimes a[a], \\ 
& \phi (a) \mapsto 1\otimes a^2[] +u \otimes 1[a|a], \\
& u\mapsto u\otimes 1[], \\
& u^{-1} \mapsto u^{-1} \otimes 1[]
\end{align*}
which fits into a natural commutative diagram
\[
\xymatrix@C=1.0 cm{
\dots \ar[r] & \ella \ar[r]^-{\iota} \ar[d]^-{\psi} & \ellpera \ar[r]^-{S} \ar[d]^-{\psi^{per}} 
& \ellposa \ar[r]^-{\partial} \ar[d]^-{\psi^+} & \ella \ar[r] \ar[d]^-{\psi} & \dots \\
\dots \ar[r] & HC_*^-(A) \ar[r]^-{\iota} & HC_*^{per}(A) \ar[r]^-{S} 
& HC_{*-2}(A) \ar[r]^-{\partial} & HC_{*-1}^-(A) \ar[r] & \dots 
}\] 
If $A$ is smooth over $k$, supplemented and of finite type, then $\psi^{per}$ is an isomorphism.
\end{theorem}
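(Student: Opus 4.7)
The plan is to mimic the structure of the proofs of the approximation theorem for $\psi$ and for $\psi^+$, and then conclude by applying the five lemma to the displayed commutative ladder. The hypotheses on $A$ (smooth, supplemented, finite type) are exactly what was needed in the previous two theorems to guarantee that $\psi$ and $\psi^+$ are isomorphisms, so the outer four vertical arrows of the ladder are already isomorphisms for free; the entire content of the theorem is packaged in (i) well-definedness of $\psi^{per}$, (ii) commutativity of the ladder, and (iii) exactness of the upper row.

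For (i), I would note that the relations in Definition \ref{def:ellper} are precisely the relations of $\ell(A)$ that do not mention $\delta$, together with the invertibility of $u$. Consequently the verification that $\psi^{per}$ respects the first five relations is identical to the verifications of (\ref{r5}), (\ref{r6}), (\ref{r8}), (\ref{r12}) in the proof of Theorem \ref{thm:lfunctorapprox}, and can simply be quoted. The only genuinely new check is $uu^{-1}=1$, which follows from formula (\ref{small_product_formula}): $(u\otimes 1[])\cdot (u^{-1}\otimes 1[])=1\otimes 1[]$, which is the unit of $HC^{per}_*(A)$. For (ii), the leftmost square is checked on the generators of $\ell(A)$: the images of $\phi,q,u$ match tautologically, while $\psi^{per}(\iota(\delta(a)))=0$ must equal $\iota(\psi(\delta(a)))=\iota(1\otimes 1[a])$, and this holds because $u\otimes 1[a]=\partial^-(1\otimes a[])$ is a boundary, so multiplying by $u^{-1}$ in $HC^{per}_*(A)$ kills $1\otimes 1[a]$. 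The middle square is immediate from $S(u^{-i}\otimes 1[])=u^{-i+1}\otimes 1[]$ and $\psi^+(v^{i-1})=u^{-(i-1)}\otimes 1[]$. The right square follows from the same snake-lemma computation used in the proof of Theorem \ref{thm:lfunctorapprox}, combined with $\ella$-linearity of $\partial$.

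For (iii), the key observation is that, by comparing generators and relations, $\ellper(A)$ is naturally isomorphic to the localization $\tilde\ell(A)[u^{-1}]$, and through Theorem \ref{theorem_deformation} to $(\Omega^*_{A|k}[u,u^{-1}],*)$. Under this identification, $\iota:\ell(A)\to \ellper(A)$ factors as the canonical surjection $\ell(A)\twoheadrightarrow \tilde\ell(A)$ followed by the localization inclusion $\tilde\ell(A)\hookrightarrow \tilde\ell(A)[u^{-1}]$, so $\ker\iota=I_\delta(A)=\image(\tau)$; hence exactness at $\ell(A)$ follows from Proposition \ref{graded_object_special}. Exactness at $\ellper(A)$ is straightforward: $S$ vanishes on the subring $\tilde\ell(A)$ and sends $u^{-i}$ to $v^{i-1}$, so $\ker S=\tilde\ell(A)=\image \iota$ in the localized description. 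Exactness at $\ellpos(A)$ is the subtler point; under the smoothness hypothesis the Cartier map is an isomorphism, so I would invoke the long exact sequence from the theorem preceding the $\psi^+$ theorem and combine it with the explicit description of the filtration $F_s\ellpos(A)$ to identify $\ker\partial$ with the $\ell(A)$-submodule generated by the $v^i$, which is precisely $\image S$.

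Once (i)--(iii) are in place, the five-lemma applied to the ladder with $\ellper(A)$ in the middle (flanked by $\ell(A)$ and $\ellpos(A)$ on either side) yields the conclusion, since $\psi$ and $\psi^+$ are isomorphisms by Theorem \ref{thm:lfunctorapprox} and the $\psi^+$ theorem respectively, and the bottom row is exact by Proposition \ref{prop:HC-mod}. The step I expect to be the main obstacle is exactness of the top row at $\ellpos(A)$, since $\ellpos(A)$ mixes $\gamma$ and $v$ type generators with a nontrivial relation and the map $\partial$ is only $\ell(A)$-linear rather than an algebra map; here one really needs to leverage the Cartier isomorphism via the preceding $\ellpos$ theorem, rather than to argue purely formally from the presentation.
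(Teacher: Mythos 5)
Your parts (i) and (ii) — well-definedness of $\psi^{per}$ and commutativity of the ladder — coincide with the paper's argument: the paper likewise quotes the cycle/relation verifications from the proof of Theorem \ref{thm:lfunctorapprox} (valid because $C^-_*(A)$ is a subcomplex of $C^{per}_*(A)$ and the products are compatible), checks $uu^{-1}=1$ directly, and handles the right-hand square by the same snake-lemma computation. Where you genuinely diverge is the final isomorphism step. The paper does \emph{not} prove exactness of the upper row and does \emph{not} use the five lemma; it instead observes that $\iota : HC^-_n(A)\to HC^{per}_n(A)$ is an isomorphism for $n<0$ in general, that the relation $u\delta(a)=0$ forces $\iota:\ella\to\ellpera$ to be an isomorphism in negative homological degrees as well, concludes from the left-hand square that $\psi^{per}$ is an isomorphism in negative degrees, and then extends to all degrees by periodicity (multiplication by $u$ is invertible on both sides). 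Your route is viable under the stated hypotheses, but it commits you to proving exactness of the sequence $\cdots\to\ella\to\ellpera\to\ellposa\to\ella\to\cdots$, a statement the paper never establishes; your sketches at $\ellpera$ and $\ellposa$ are the places where real work remains. Exactness at $\ellpera$ needs the unique representation $\sum_i \omega_i u^i$ from Theorem \ref{theorem_deformation} together with the filtration of $\ellposa$ and the injectivity statement of Lemma \ref{starstar} to see that $\sum_{i\ge 1}\tilde\omega_{-i}v^{i-1}=0$ forces all $\omega_{-i}=0$; exactness at $\ellposa$ needs the identity $\partial\circ I=\tau$, the isomorphism $\image(I)\cong\Omega^*_{A|k}/d\Omega^{*-1}_{A|k}$, and surjectivity of the Cartier map to identify $\Ker(\partial|_{\image I})$ with $\ella\cdot v^0\subseteq\image(S)$. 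So your approach buys an exactness theorem for the upper row (parallel to Proposition \ref{graded_object_special} and the $\ellpos$ exactness theorem) at the price of considerably more work, whereas the paper's periodicity argument sidesteps the upper row entirely.
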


\begin{proof}
The image classes of the first types of generators $q(a)$, $\phi (a)$ and $u$ are cycles of the complex 
$T^{-\infty , \infty }_* (A)$ since we have proven earlier that they are cycles of the subcomplex $T^{-\infty , 0}_*(A)$.
Consider the chain
\[ u^{-1}\otimes a[] = a \in \overline B_{1,1} (A) = A. \]
We have $b(a[])=0$ and $B(a[])=1[a]$ such that $\partial^{per} (u^{-1} \otimes a[]) = 1\otimes 1[a].$
If we take $a=1$ we see that the image class of $u^{-1}$ is a cycle as well. For general $a$ we find that
$\iota (\psi (\delta (a)))=0$. It follows, that the first five relations of $\ellper (A)$ are respected by $\psi^{per}$
since they appear as relations of $\ell (A)/I_\delta (A)$. The same thing happens with the last relation. 
The definition of the product structure on $HC^{per}_*(A)$ gives directly that the second last relation of 
$\ellper (A)$ is respected. 

The algebra homomorphisms $\psi^{per}\circ \iota$ and $\iota \circ \psi$ agree on the generators of $\ell (A)$ 
so the square to the left commutes. The $\ell (A)$-linear maps $\psi^+ \circ S$ and $S\circ \psi^{per}$ 
agrees on the module generators $u^{-i}$, $i\geq 1$ so the middle square commutes. 
In order to show that the composites in the square to the right agrees on module generators $\gamma (a)$ and $v^i$, 
one must show that the lower boundary map satisfies $\partial (a) = 1\otimes 1[a]$ for $a\in A$ and 
$\partial (u^{-i})=0$. This follows by the snake lemma. The chain $u^{-1}\otimes a[]\in T_*^{-\infty , \infty }$ 
is a lift of the cycle $a\in \overline B_{1,1} (A)\subseteq T_*^{1, \infty}$ and 
$\partial^{per}(u^{-1}\otimes a[])=1\otimes 1[a] = \iota (1\otimes 1[a])$. The chain $u^{-i}\otimes 1[]$ is a lift of 
the cycle $u^{-i}\otimes 1[] \in T_*^{1,\infty}$ and $\partial^{per}(u^{-i}\otimes 1[])=0$.

In general, the map $\iota : HC^-_n (A) \to HC^{per}_n (A)$ is an isomorphism for $n<0$. 
The relation $u\delta (a)=0$ ensures that $\iota : \ella \to \ellpera$ is also an isomorphism in negative 
homological degrees. Under the extra assumption on $A$, the map $\psi$ is an isomorphism 
by our main theorem. It then follow by the square to the left in the diagram
that $\psi^{per}_*$ is an isomorphism in negative homological degrees. 
By periodicity, it is an isomorphism in all degrees.
\end{proof}

\section{Simplicial algebras and spectral sequences}
\label{sec:spectral_sequences}

In this section we set up  spectral sequences relating the non-abelian derived functors of $\ell$, $\ellpos$, $\ellper$
to the cyclic homology theories. We start by reviewing Goodwillie's definition of of these homologies for simplicial
algebras \cite{Goodwillie} page 363.

Let $k$ be a commutative and unital ring. Assume that we have a cyclic $k$-algebra
\[ X :\scat C^{op} \to \Alg \]
The main example comes from the cyclic bar construction $X=Z(R)$ for a $k$-algebra $R$. 
Here $X_n = R\otimes R^{\otimes n}$.

Form the $\ZZ \times \ZZ$-graded bicomplex $(\overline B_{**} (X) , B, b)$ with
\[ \overline B_{p,q} (X) = \begin{cases}
\overline X_{q-p} , & p \leq q ,\\
0, & p>q
\end{cases} \]
where $\overline X_n = X_n /\text{degeneracies}$. 
The horizontal boundary map is a "Connes' boundary" $B$ of bidegree $(-1,0)$. The vertical boundary map is a
"Hochschild boundary" of bidegree $(0,-1)$. See \cite{Goodwillie} for the precise definitions.

For $-\infty \leq \alpha \leq \beta \leq \infty$ we let $T^{\alpha , \beta }_* (X)$ be the $\ZZ$-graded chain complex with
\[ T^{\alpha , \beta }_n (X) = \prod_{\alpha \leq p \leq \beta} \overline B_{p, n-p} (X) \]
and boundary map $B+b$. Then
\begin{center}
\begin{tabular}{l l}
$HH_*(X) = H_* (T^{0,0}_* (X)), \quad $ & $HC_*(X) = H_*(T^{0, \infty }_* (X)),$ \\
$HC^-_* (X) = H_*(T^{-\infty , 0}_* (X)), \quad $ & $HC^{per}_*(X) = H_*(T^{-\infty , \infty }_* (X)).$
\end{tabular}
\end{center}
When $X=Z(R)$ we get the homology theories of $R$ described in Definition \ref{def:homologies}.

For a simplicial $k$-algebra $R_\bullet$ one has a simplicial cyclic $k$-algebra, or equivalently a functor
\[ Z(R_\bullet ) : \scat C^{op} \times \scat^{op} \to \Alg . \]
where $Z(R_\bullet )_{n,m} = R_m \otimes R_m^{\otimes n}$.
So assume we have a functor 
\[ {\bf X} : \scat C^{op} \times \scat^{op} \to \Alg .\]
Form the $\ZZ \times \ZZ \times \ZZ$-graded triple complex $(\overline B_{***} ({\bf X}), B, b, d)$ with
\[ \overline B_{p, q, r} (\bf X) = \begin{cases}
\overline {\bf X}_{q-p,r} , & p\leq q , \quad 0 \leq r, \\
0, & \text{ otherwise.} 
\end{cases} \]
The boundary maps are Connes' boundary $B$ of triple-degree $(-1, 0, 0)$, Hochschild's boundary $b$ 
of triple-degree $(0, -1, 0)$ and the boundary $d$ of triple-degree $(0, 0, -1)$ which comes from the simplicial maps 
$d_i: {\bf X}_{n,m} \to {\bf X}_{n,m-1}$.

We form the $\ZZ$-graded chain complex $T^{\alpha , \beta}_* ({\bf X})$ with
\[ T^{\alpha , \beta }_n ({\bf X}) = \prod_{\alpha \leq p \leq \beta } \medspace
\bigoplus_{p+q+r=n} \overline B_{p,q,r}({\bf X}) \]
and boundary map $B+b+d$. Then 
\begin{center}
\begin{tabular}{l l}
$HH_*({\bf X}) = H_* (T^{0,0}_* ({\bf X})), \quad$ &
$HC_*({\bf X}) = H_*(T^{0,\infty }_* ({\bf X)}),$ \\
$HC^-_* ({\bf X}) = H_*(T^{-\infty , 0}_* ({\bf X})), \quad $ &
$HC^{per}_*({\bf X}) = H_*(T^{-\infty , \infty }_* ({\bf X})).$
\end{tabular}
\end{center}
When ${\bf X} = Z(R_\bullet )$ we get the various homologies of the {\em simplicial} algebra $R_\bullet$.

Finally, Goodwillie proves, \cite{Goodwillie} Lemma I.3.5,  that all four homology theories of simplicial algebras are 
homotopy invariant: A weak equivalence between two simplicial algebras, which are both flat over $k$, induces 
isomorphisms.  

We will now define a filtration of the chain complex $T^{\alpha , \beta }_* ({\bf X})$. First we introduce some 
sets of integers.
\begin{definition}
For integers $n$, $p$ and $s$ we let
\begin{align*} 
& J_n(p) = \{ (q,r) \in \ZZ \times \ZZ | \quad p \leq q, \quad 0 \leq r, \quad p+q+r=n \} , \\
& J_n^s(p) = \{ (q,r) \in J_n(p) | \quad r \leq s\} .
\end{align*}
\end{definition}

Note that $J_n(p)$ is a finite set for all $n$ and $p$. Furthermore, we have a filtration of {\em finite} length
\[ \emptyset = J_n^{-1}(p) \subseteq J_n^0(p) \subseteq J_n^1(p) \subseteq \dots \subseteq J_n(p) . \]
Note also that
\[ T^{\alpha , \beta}_n ({\bf X}) = 
\prod_{\alpha \leq p \leq \beta } \medspace \bigoplus_{(q,r) \in J_n(p)} {\overline {\bf X}}_{q-p,r} . \]

\begin{definition}
Let  
\[ 0 = F_{-1}T_*^{\alpha, \beta }({\bf X}) \subseteq  F_{0}T_*^{\alpha, \beta }({\bf X}) \subseteq
F_{1}T_*^{\alpha, \beta }({\bf X}) \subseteq \dots \subseteq T_*^{\alpha, \beta }({\bf X}) \]
be the filtration of chain complexes defined by
\[ F_s T^{\alpha , \beta}_n ({\bf X}) = 
\prod_{\alpha \leq p \leq \beta } \medspace \bigoplus_{(q,r) \in J_n^s (p)} {\overline {\bf X}}_{q-p,r} . \]
Note that the length of this filtration if finite in each degree.
\end{definition}

The filtration quotients becomes
\begin{align*} 
 F_s T^{\alpha , \beta}_n ({\bf X}) / F_{s-1} T^{\alpha , \beta}_n ({\bf X}) & = 
\prod_{\alpha \leq p \leq \beta } \medspace \bigoplus_{(q,r) \in J_n^s (p) \setminus J_n^{s-1}(p)} 
{\overline {\bf X}}_{q-p,r} \\
&=  \prod_{\alpha \leq p \leq \beta } \medspace \bigoplus_{\overset {p\leq q} {p+q = n-s}} 
{\overline {\bf X}}_{q-p,s} \\
&= T^{\alpha , \beta }_{n-s}({\bf X}_{\bullet , s} ). 
\end{align*}

The filtration gives us a spectral sequence with $E^1$-page
\begin{align*} E^1_{s,t}(\alpha , \beta ) &= 
H_{s+t} \big( F_s T^{\alpha , \beta}_*({\bf X}) / F_{s-1} T^{\alpha , \beta}_*({\bf X}) \big) \\
&= H_{t} (T_*^{\alpha , \beta }({\bf X}_{\bullet , s}), B+b)
\end{align*}
and $E^2$-page
\[ E^2_{s,t}(\alpha , \beta ) = H_s( H_{t} (T_*^{\alpha , \beta }({\bf X}_{\bullet , *}), B+b), d). \]
Since the filtration is finite in each degree, the spectral sequence converges strongly to the homology
$H_*( T^{\alpha , \beta }_* ({\bf X}), B+b+d)$. Thus in the case $X=Z(R_\bullet )$ we have strongly 
convergent spectral sequences
\begin{align*}
& E^1_{s,t}(0, 0) = HH_t(R_s) \Rightarrow HH_*(R_\bullet ), \\
& E^1_{s,t}(-\infty ,0 ) = HC^-_t(R_s) \Rightarrow HC_*^- (R_\bullet ), \\
& E^1_{s,t}(0, \infty ) = HC_t(R_s) \Rightarrow HC_*(R_\bullet ), \\
& E^1_{s,t}(-\infty , \infty ) = HC^{per}_t (R_s) \Rightarrow HC^{per}_* (R_\bullet ).
\end{align*}

At least for commutative algebras we can interpret these spectral sequences further.
Let $\Commalg$ denote the category of commutative $k$-algebras with unit. As remarked already by Quillen in
\cite{Quillen2} page 66, section 1 one can define left derived functors of {\em any} functor $F: \Commalg \to {\cal A}$ 
where ${\cal A}$ is an abelian category. The definition uses simplicial resolutions which give much more computational
freedom than one has in the cotriple derived functor setting. We will go through the construction of these derived functors.

The category $\Commalg$ is complete and cocomplete. The initial object is $k$, the terminal object is the zero algebra, 
the product is the Cartesian product of underlying sets with addition and multiplication defined component-wise. 
The coproduct of two objects is the tensor product $R\sqcup S = R\otimes S$.

Write $s{\cal C}$ for the category of simplicial objects in a category ${\cal C}$.
We equip $s\Commalg$ with its standard simplicial model category structure \cite {Quillen1}. 
It is described in \cite{G-J} section 4 page 97. The setting is more general there, but simplicial commutative 
algebras are included according to the example on page 103. 

The polynomial algebra functor is left adjoint of the forgetful functor 
\[ k[-]: \Set \rightleftarrows \Commalg : U .\] 
Applying this degree-wise, we get an adjoint pair $k[-]: s\Set \rightleftarrows s\Commalg : U$.
A morphism $f$ in $s\Commalg$ is a weak equivalence if $U(f)$ is a weak equivalence, a fibration if
$U(f)$ is a fibration and a cofibration if it has the left lifting property with respect to all trivial fibrations in 
$s\Commalg$. 

For objects $A$ and $B$ in $s\Commalg$ and $K$ in $s\Set$, we have a co-power operation 
$A\otimes K\in s\Commalg$ which is described on page 85 and 100 of \cite{G-J}. It is given by
\[ (A\otimes K)_n = \bigsqcup_{k\in K_n} A_n . \]
The simplicial mapping space $\Map (A, B)$ is given by
\[ \Map (A,B)_n = \Hom (A\otimes \Delta^n , B) \] 
where $\Delta^n$ is the simplicial set $\Hom_\Delta (-,[n])$ and the simplicial maps $d_i$ and $s_i$ are induced by the 
cosimplicial maps $d^i : \Delta^{n-1} \to \Delta^n$ and $s^i :\Delta^{n+1}\to \Delta^n$.

A simplicial homotopy from $f:A\to B$ to $g:A\to B$ is a map $h\in \Map (A,B)_1$ 
such that $d_0h = f$ and $d_1h = g$.

If we have a commutative square
\begin{equation}
\label{llp}
\xymatrix@C=1.0 cm{
A^\prime \ar[r] \ar@{>->}[d]_i & A \ar@{>>}[d] ^-{p}_-{\sim} \\
B^\prime \ar[r] \ar@{-->}[ru]^-{\hat p} & B
}
\end{equation}
where $A^\prime$ is cofibrant, $i$ is a cofibration and $p$ a trivial fibration, then the lifting $\hat p$ 
is unique up to simplicial homotopy under $A^\prime$ and over $B$ by \cite{G-J} Proposition 3.8 page 92.

One has a more general notion of simplicial homotopy \cite{Weibel} 8.3.11. If $f, g:X \to Y$ are simplicial objects in any category ${\cal C}$,
a simplicial homotopy from $f$ to $g$ is a family of morphisms $h_i: X_n \to Y_{n+1}$, $i = 0, \dots , n$ of 
${\cal C}$ such that $d_0h_0 = f_n$ and $d_{n+1} h_n = g_n$, while
\[
d_ih_j = \begin{cases}
h_{j-1}d_i, & i<j, \\
d_ih_{i-1}, & i=j\neq 0, \\
h_jd_{i-1}, & i>j+1,
\end{cases}
\]

\[
s_ih_j = \begin{cases}
h_{j+1}s_i, & i\leq j, \\
h_js_{i-1}, & i>j.
\end{cases}
\]

For ${\cal C}= \Commalg$ there is a one to one correspondence between the two notions of simplicial homotopy. This
follows by the proof of \cite{Weibel} Theorem 8.3.12, which carries through for any category which is finitely cocomplete.
One simply has to replace $A\times \Delta^1$ by $A\otimes \Delta^1$ (as indicated in exercise 8.3.5).

If instead ${\cal C}$ is an abelian category ${\cal A}$ and $f,g: V \to W$ are two simplicially homotopic maps, then
$f_*, g_* : N(V) \to N(W)$ are chain homotopic maps between the normalized chain complexes by \cite{Weibel} Lemma 8.3.13. 

A commutative $k$-algebra $A$ is viewed as an object of $s\Commalg$ as the constant simplicial algebra, where one has a copy of $A$ in each degree and all face and degeneracy maps are identity maps. A simplicial resolution of $A$ is a trivial 
fibration $R\to A$ where $R$ is cofibrant. Simplicial resolutions exist since one can factor the unit $k\to A$ into a 
cofibration followed by a trivial fibration $\xymatrix@C=0.5 cm{ k \ar@{>->}[r] & R \ar@{>>}[r]^-{\sim} & A }$.

If $F:\Commalg \to \Mod$ is {\em any } functor from commutative $k$-algebras to $k$-modules, 
then we can apply $F$ in each simplicial degree of a resolution $R\to A$ and define the left derived functors 
as the homology groups of this object $L_iF (A) = \pi_i F(R)$, $i=1,2, \dots$. The definition is independent 
of the choice of resolution and functorial in $A$ as one sees by the diagram (\ref{llp}) and its homotopy 
uniqueness property applied to the two settings
\[
\xymatrix@C=1.0 cm{
k \ar@{>->}[r] \ar@{>->}[d] & R \ar@{-->}@<0.5ex>[ld] \ar@{>>}[d]^-{\sim} 
& & k \ar@{>->}[d] \ar[dr] \ar@{=}[r] & k \ar@{>->}[d] \\
R^\prime \ar@{-->}@<0.5ex>[ru] \ar@{>>}[r]_-{\sim} & A 
& & R \ar@{>>}[d]_-{\sim} \ar[dr] \ar@{-->}[r] & S \ar@{>>}[d]^-{\sim} \\
& & & A \ar[r]^-{f} & B
}
\]
Note that $F$ caries a simplicial homotopy to a simplicial homotopy according to the general notion. 

By the derived functors of an endofunctor $F$ on $\Commalg$ we understand the
derived functors of the composite $V\circ F$ where $V: \Commalg \to \Mod$ is the forgetful functor.

For an object $A$ in $\Commalg$ we can form a simplicial resolution $R\to A$ and consider the 
spectral sequences above. The $E^2$-pages becomes derived functors. For the target groups we can use 
homotopy invariance. Thus we have strongly convergent spectral sequences
\begin{align*}
& E^2_{s,t}(0,0) = L_s(HH_t)(A) \Rightarrow HH_*(A), \\
& E^2_{s,t}(-\infty ,0 ) = L_s(HC^-_t)(A) \Rightarrow HC_*^- (A), \\
& E^2_{s,t}(0, +\infty ) = L_s(HC_t)(A) \Rightarrow HC_*(A), \\
& E^2_{s,t}(-\infty , +\infty ) = L_s(HC^{per}_t)(A) \Rightarrow HC^{per}_*(A).
\end{align*}

One can always choose a simplicial resolution, which is a polynomial algebra in each degree. The argument is given in  \cite{Quillen1} Chapter 2, p 4.11 Remark 4 and in \cite{Miller} section 3. Let us review parts of it. 

A morphism $i:R\to S$ in $s\Commalg$ is called {\em almost free} provided that there 
is a sequence of subsets $X_n \subseteq S_n$, $n=0,1, \dots$ such that for each $n$ one has
firstly $s_i X_n \subseteq X_{n+1}$ for $0\leq i \leq n$ and secondly the natural map $R_n \otimes k[X_n] \to S_n$ is an isomorphism. 

One can show that any almost free map is a cofibration. Furthermore, any morphism $R\to S$ in 
$s\Commalg$ admits a factorization $\xymatrix@C=0.5 cm{R \ar@{>->}[r] & S^\prime \ar@{>>}[r]^-{\sim} & S}$
in which $\xymatrix@C=0.5 cm{R \ar@{>->}[r] & S^\prime}$ is almost free and 
$\xymatrix@C=0.5 cm{S^\prime \ar@{>>}[r]^-{\sim} & S}$ is a trivial fibration. By factoring the unit $k\to A$
in this way, one gets an almost free resolution  
$\xymatrix@C=0.5 cm{k \ar@{>->}[r] & A^\prime \ar@{>>}[r]^-{\sim} & A}$ where $A^\prime_n$ is a 
polynomial algebra for each $n$. 

Choose an almost free simplicial resolution of $A$. By the Hochschild-Kostant-Rosenberg theorem we have
natural isomorphisms of functors 
\[ L_i (HH_t) \cong L_i(\Omega_{-|k}^t), \quad i = 0,1, \dots \] 
such that we get the well-known spectral sequence 
\[ E^2_{s,t}(0,0) = L_s(\Omega^t_{-|k}) (A) \Rightarrow HH_*(A). \]

By the remaining spectral sequences and our approximation functors we get the following result:
\begin{theorem}
\label{thm:l-funktor_ss}
Let $A$ be a finitely generated commutative and unital $\FF_2$-algebra (possibly non-negatively graded). 
Then there are natural isomorphisms of non-abelian derived functors for $i=0, 1, 2, \dots$ as follows:
\begin{align*}
& L_i (\ell  )(A) \cong L_i (HC_*^-)(A) , \\
& L_i(\ellpos )(A) \cong L_i(HC_*)(A), \\ 
& L_i(\ellper )(A) \cong L_i(HC_*^{per})(A).
\end{align*}
Furthermore, there are strongly convergent spectral sequences
\begin{align*}
& E^2_{s,*}(-\infty ,0 ) = L_s(\ell )(A) \Rightarrow HC_*^- (A), \\
& E^2_{s,*}(0, +\infty ) = L_s(\ellpos )(A) \Rightarrow HC_*(A), \\
& E^2_{s,*}(-\infty , +\infty ) = L_s(\ellper )(A) \Rightarrow HC^{per}_*(A).
\end{align*}
\end{theorem}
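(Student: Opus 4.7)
The plan is to reduce everything to the approximation isomorphisms of Sections 5 and 6 by applying them degree-wise to a polynomial simplicial resolution of $A$. First choose an almost free simplicial resolution $A^\prime_\bullet \to A$ as in the discussion preceding the theorem, so that each $A^\prime_n$ is a polynomial algebra. Because $A$ is finitely generated, I expect one can arrange the resolution so that each $A^\prime_n$ is a polynomial algebra on finitely many generators; such polynomial algebras are smooth, supplemented (by sending the generators to $0$), and of finite type, so the hypotheses of Theorem \ref{thm:lfunctorapprox} and of the analogous approximation theorems for $\ellpos$ and $\ellper$ hold in every simplicial degree.

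Applying these approximation theorems degree-wise produces natural isomorphisms of simplicial graded objects
\[
\ell(A^\prime_\bullet) \cong HC^-_*(A^\prime_\bullet), \quad \ellpos(A^\prime_\bullet) \cong HC_*(A^\prime_\bullet), \quad \ellper(A^\prime_\bullet) \cong HC^{per}_*(A^\prime_\bullet).
\]
Taking non-abelian left derived functors---which, by definition, are the homotopy groups of the functor applied to such a polynomial resolution---immediately yields the first three isomorphisms of the theorem. The three spectral sequences then follow by feeding these degree-wise identifications into the $E^1$ page of the strongly convergent simplicial spectral sequences $E^1_{s,t} = HC^-_t(A^\prime_s) \Rightarrow HC^-_*(A^\prime_\bullet)$, and its analogues for $HC$ and $HC^{per}$, constructed earlier in this section; Goodwillie's homotopy invariance identifies the abutments with the cyclic homologies of $A$ itself.

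The main obstacle in this plan is the finite type requirement: the usual almost free construction tends to produce polynomial algebras on very large generating sets, not of finite type even when $A$ is. I would handle this by building the resolution inductively, at each simplicial step adjoining only the finitely many new polynomial generators needed to kill the homotopy of the partial construction (invoking Hilbert's basis theorem to keep the relevant ideals finitely generated), so that every $A^\prime_n$ remains finitely generated, smooth and supplemented. The remaining items---naturality of $\psi$, $\psi^+$, $\psi^{per}$ in the simplicial direction (so that the $d^1$ differentials match under the identification), and strong convergence of the spectral sequences (which follows from the filtration being finite in each total degree, as already noted above)---are then routine.
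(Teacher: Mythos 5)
Your proposal is correct and follows essentially the same route as the paper: choose an almost free resolution that is degreewise a polynomial algebra on finitely many generators (the paper simply cites Weibel 8.8.3 for this, where you sketch the inductive Noetherian argument), note that such algebras are smooth, supplemented and of finite type, apply the approximation theorems degree-wise, and feed the resulting identifications into the previously constructed strongly convergent simplicial spectral sequences.
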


\begin{proof}
By \cite{Weibel} 8.8.3. there exists an almost free resolution $R\to A$ such that each $R_n$ is a 
polynomial algebra on finitely many generators. A polynomial algebra is always supplemented so the
stated isomorphisms follows by our approximation theorems. 
\end{proof}

\begin{remark}
If $A=k[X]$ is a polynomial algebra on finitely many generators (or finitely many generators in each degree), 
then the identity map $A\to A$ of the constant simplicial algebra is an almost free simplicial resolution. 
Thus the higher derived functors become trivial such that $E^2_{s,*}=0$ for $s\geq 1$ in each of the four 
spectral sequences. So the spectral sequences collapse and we get isomorphisms 
$L_0(F)(k[X]) \cong F(k[X])$ for each functor $F=\Omega_{-|k}$, $\ell$, $\ellpos$
and $\ellper$. Hence we recover the approximation theorems for polynomial algebras.
\end{remark}

\section{The zeroth derived functors and universality}

The zeroth derived functor is sometimes given by the following result:
\begin{lemma}
\label{0th}
Let $k$ be a commutative ring and let $\Commalgnu$ denote the category of 
commutative $k$-algebras where we do no longer require that objects are unital. Let 
$F$ be an endofunctor on this category. Assume that for every surjective morphism 
$f:A\to B$ in $\Commalgnu$ the following two conditions hold: 
\begin{enumerate}
\item The map $F(f): F(A) \to F(B)$ is surjective,
\item The sequence $F(\Ker f) \to F(A) \to F(B)$ is exact in the category of $k$-modules.
\end{enumerate}
Then there is an algebra isomorphism $L_0F(C) \cong F(C)$ for all $C$ in $\Commalg$.
\end{lemma}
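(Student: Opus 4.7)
The plan is to compute $L_0 F(C)$ using a simplicial resolution $p_\bullet:R_\bullet\to C$ in $s\Commalg$ and identify it directly with $F(C)$. By definition $L_0 F(C)=\pi_0 F(R_\bullet)$, which as a $k$-module is the coequalizer
\[
F(R_0)\big/\image\bigl(F(d_0)-F(d_1)\bigr);
\]
a short check using $d_i\circ s_0=\Id$ shows that this submodule is in fact an ideal, so the quotient inherits an algebra structure. Writing $\iota:K\hookrightarrow R_0$ for the kernel of the surjection $p_0:R_0\twoheadrightarrow C$ (viewed in $\Commalgnu$), hypotheses (1) and (2) yield an exact sequence
\[
F(K)\xrightarrow{F(\iota)} F(R_0)\xrightarrow{F(p_0)} F(C)\to 0,
\]
so $F(C)=F(R_0)/\image F(\iota)$. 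The proof will therefore reduce to the equality
\[
\image\bigl(F(d_0)-F(d_1)\bigr)=\image F(\iota)
\]
as submodules of $F(R_0)$.

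The inclusion $\subseteq$ follows from $p_0\circ d_0=p_0\circ d_1$ (since $C$ is constant), which forces $\image(F(d_0)-F(d_1))\subseteq\Ker F(p_0)=\image F(\iota)$. For the reverse inclusion I will consider the ideal $M=\Ker d_1\subseteq R_1$, with inclusion $j:M\hookrightarrow R_1$. The trivial-fibration property provides a surjection $\bar d_0:M\twoheadrightarrow K$ in $\Commalgnu$ --- for any $k\in K$, pick $x\in R_1$ with $d_0x-d_1x=k$ and set $y=x-s_0d_1x\in M$, so that $d_0 y=k$ --- and the identity $\iota\circ\bar d_0=d_0\circ j$, combined with hypothesis~(1), lets me write every element of $\image F(\iota)$ as $F(d_0)F(j)(\eta)$ for some $\eta\in F(M)$.

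The main obstacle will then be to show that $F(d_1)F(j)(\eta)=0$, for then
\[
F(\iota)F(\bar d_0)(\eta)=F(d_0)F(j)(\eta)-F(d_1)F(j)(\eta)\in\image(F(d_0)-F(d_1))
\]
completes the argument. This difficulty is exactly what forces the lemma to be stated for non-unital algebras: the composite $d_1\circ j$ is the zero morphism $M\to R_0$ in $\Commalgnu$, which factors through the zero algebra $\{0\}$, so it suffices to verify $F(\{0\})=0$. This in turn follows by applying hypothesis~(2) to $\Id:\{0\}\to\{0\}$, which makes the sequence $F(\{0\})\xrightarrow{\Id}F(\{0\})\xrightarrow{\Id}F(\{0\})$ exact at the middle, forcing $F(\{0\})=0$. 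In the unital setting this step would be unavailable since there is no zero object, so this handling of the zero morphism is the step where one has to be careful.
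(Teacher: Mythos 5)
Your proof is correct and follows essentially the same route as the paper's, which simply defers the main argument to its reference [BO2, Lemma 6.2] and only spells out the multiplicative structure: you identify $L_0F(C)$ with $F(R_0)$ modulo degree-one boundaries, use hypotheses (1) and (2) to identify this with $F(R_0)/\Ker F(p_0)\cong F(C)$, and the key steps --- the surjection $\Ker d_1\twoheadrightarrow \Ker p_0$, the vanishing $F(\{0\})=0$ to handle the zero morphism in $\Commalgnu$, and the ideal property via $s_0$ --- are exactly the ones required. The only cosmetic difference is that you work with the unnormalized complex, so your ideal is $\image(F(d_0)-F(d_1))$ where the paper uses the normalized description $F(d_1)(\Ker F(d_0))$; these submodules coincide.
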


\begin{proof}
The proof of \cite{BO_Topology} Lemma 6.2 carries through in this setting. But let us comment a little further on 
the multiplicative structure. Let $P\to C$ be a simplicial resolution. From the normalized chain complex
$N_*F(P)$ one sees that 
\[ L_0F(C) = F(P_0)/F(d_1)(\Ker F(d_0)). \] 
By the simplicial identities, $d_1 : P_1 \to P_0$ is surjective. Thus $F(d_1)$ is surjective such that $F(d_1)(\Ker F(d_0))$ 
is an ideal in $F(P_0)$ and $L_0F(C)$ is an algebra. 
\end{proof}

For $k = \FF_2$ we let $\ellnu$ and $\ellpernu$ be the functors which are 
defined as $\ell$ and $\ellper$ except that we exclude the last relation 
(\ref{r12}) and (\ref{rper7}) in each case.

\begin{proposition}
\label{prop:L_0ellnu}
There are natural isomorphisms $L_0(\ellnu ) \cong \ellnu$, $L_0(\ellpernu ) \cong \ellpernu$.
\end{proposition}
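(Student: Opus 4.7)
The plan is to apply Lemma \ref{0th} to $F = \ellnu$ and $F = \ellpernu$. This reduces the task to verifying, for every surjective morphism $f: A \to B$ in $\Commalgnu$ with kernel $I$, that $F(f)$ is surjective and that $F(I) \to F(A) \to F(B)$ is exact in $k$-modules. The first of these is immediate: $\ellnu(B)$ is generated by $\delta(b), \phi(b), q(b)$ for $b \in B$ together with $u$, each of which lifts through $f$ (pick any preimage of $b$, and send $u$ to $u$); likewise for $\ellpernu(B)$, whose generators are $\phi(b), q(b), u, u^{-1}$.

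For the exactness condition I would first identify $\ker \ellnu(f)$ as the ideal $K$ of $\ellnu(A)$ generated by $\delta(i), \phi(i), q(i)$ for $i \in I$. The additive relations (\ref{r1})--(\ref{r3}) give $\delta(a) - \delta(a') = \delta(a-a')$, $\phi(a) - \phi(a') = \phi(a-a')$, and $q(a) - q(a') = q(a-a') + \delta((a-a')a')$, each of which is in $K$ when $a - a' \in I$. Hence the induced maps $\overline{\delta}, \overline{\phi}, \overline{q}: B \to \ellnu(A)/K$ are well defined and satisfy the defining relations of $\ellnu(B)$, yielding $\ellnu(A)/K \cong \ellnu(B)$ by the universal property.

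Next I would show that $K$ coincides with the image of $\ellnu(I) \to \ellnu(A)$. The key tools are the mixed relations, which allow one to absorb the $A$-generators into $I$-arguments: $\phi(a)\delta(i) = \delta(ia^2)$ by (\ref{r9}), $u\delta(i) = 0$ by (\ref{r11}), $q(a)\delta(i) = \delta(ia)\delta(a)$ by (\ref{r10}), and similarly for products involving $\phi(i)$ and $q(i)$ via (\ref{r5}) and (\ref{r6}). Combined with relation (\ref{r4}), which relates $\delta$-products, these let one rewrite any generator-times-generator product lying in $K$ as a sum of products all of whose $\delta, \phi, q$ arguments lie in $I$, hence in the image of $\ellnu(I)$. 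The argument for $\ellpernu$ is strictly simpler, since the $\delta$-generators are absent and one only needs the analogues of (\ref{r5}), (\ref{r6}), (\ref{r8}) together with $uu^{-1}=1$.

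The main obstacle will be the inductive bookkeeping in the second step of condition (2): the awkward terms are mixed products such as $\delta(a)\delta(i)$ and $q(a)\delta(i) = \delta(ia)\delta(a)$, in which a $\delta(a)$-factor with $a \notin I$ survives after a single application of (\ref{r4}) or (\ref{r10}). An induction on word length, combined with repeated use of (\ref{r4}) to redistribute arguments among the $\delta$-factors, is needed to push the reduction all the way into the image of $\ellnu(I)$. The analogous computation is carried out in the proof of \cite{BO_Topology} Lemma 6.2, which the paper invokes.
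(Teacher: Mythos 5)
Your reduction to Lemma \ref{0th}, the surjectivity of $\ellnu(f)$, and the identification of $\Ker\ellnu(f)$ with the ideal $K=(\delta(i),\phi(i),q(i)\mid i\in I)$ are exactly the content of the paper's proof: the paper presents the kernel as the ideal of differences $\phi(x)-\phi(y)$, $\delta(x)-\delta(y)$, $q(x)-q(y)$ with $x-y\in I$, and uses the same three identities you use (with $q(x)-q(y)=q(x-y)+\delta(x(x-y))$, via $\delta(a^2)=0$) to see that this ideal is $K$; the $\ellpernu$ case is treated the same way. The paper stops there.

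Your last step is where the proposal breaks. You propose to show that $K$ equals the honest image of $\ellnu(I)\to\ellnu(A)$ by rewriting products until all arguments lie in $I$, and you correctly single out $\delta(a)\delta(i)$ and $q(a)\delta(i)=\delta(ia)\delta(a)$ as the problem; but these are not bookkeeping difficulties, they are counterexamples, and no induction on word length will dispose of them. Take $A=\FF_2[x,y]$ and $I=(y)$. The image of $\ellnu(I)$ is the subalgebra generated by $u$ and $\delta(i),\phi(i),q(i)$ for $i\in I$; composing with the projection to $\ell(A)$ and the map $r$ of Proposition \ref{proposition_r}, this subalgebra lands in the subalgebra of $\Omega^*_{A|k}$ generated by $di$, $i\,di$, $i^2$ for $i\in I$, and every $2$-form there has its $dx\,dy$-coefficient divisible by $y$. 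Since $r(\delta(x)\delta(y))=dx\,dy$, the element $\delta(x)\delta(y)\in K$ does not lie in the image of $\ellnu(I)$, so the literal exactness you are aiming for fails. The way out is that this stronger statement is not what the argument needs: in the proof of Lemma \ref{0th} the subobject $F(d_1)(\Ker F(d_0))$ being divided out is an \emph{ideal} of $F(P_0)$, so it suffices to know that the kernel of $F(P_0)\to F(C)$ is the ideal generated by the elements $\delta(i),\phi(i),q(i)$ with $i$ in the kernel of $P_0\to C$ --- which is precisely what your kernel computation already establishes, and is how the paper reads condition (2). Replace the proposed induction by this observation and the proof is complete.
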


\begin{proof}
We use Lemma \ref{0th}. Note that by their definitions $\ellnu$ and $\ellpernu$ extend
to functors on $\Commalgnu$. We verify that the two conditions hold. The argument follows 
the proof of \cite{BO_Topology} Proposition 7.5. 

Let $I \subseteq A$ be an ideal and let $f:A \to A/I$ be
the canonical projection. The map $\ellnu (f)$ is surjective with kernel
\[ J = (\phi (x) -\phi (y), \medspace q(x)-q(y), \medspace \delta (x)-\delta (y) \medspace | \medspace x-y \in I). \]
Thus we must check that $\ellnu (I) = J$. 

The inclusion $\ellnu (I) \subseteq J$ holds since $\phi (0)=q(0)=\delta (0)$. Conversely, we have the
relations $\phi (x)-\phi (y) = \phi (x-y)$, $\delta (x)-\delta (y)=\delta (x-y)$ and, since $\delta (x^2)=0$, also
\[ q(x)-q(y)=q(x-y)+\delta (x(x-y)) . \] 
The other inclusion $\ellnu (I) \supseteq J$ follows. By the lemma above, $L_0(\ellnu )(A) \cong \ellnu (A)$.
A similar argument shows that $L_0(\ellpernu )(A) \cong \ellpernu (A)$.
\end{proof}

Put $w=\phi (1)+1$. We have
\[ \ell (A) = \ellnu (A)/ w \ellnu (A) , \quad \ellper (A) = \ellpernu (A) /w \ellpernu (A) \]
which gives us long exact sequences of derived functors
\[ \xymatrix@C=0.5 cm{\dots \ar[r] & L_*(w\ellnu )(A) \ar[r] & L_*(\ellnu )(A) \ar[r] 
& L_*(\ell )(A) \ar[r]^-{\partial} & L_{*-1}(w\ellnu )(A) \ar[r] & \dots }\]
\[ \xymatrix@C=0.5 cm{\dots \ar[r] & L_*(w\ellpernu )(A) \ar[r] & L_*(\ellpernu )(A) \ar[r] 
& L_*(\ellper )(A) \ar[r]^-{\partial} & L_{*-1}(w\ellpernu )(A) \ar[r] & \dots }\]

\begin{proposition}
\label{prop:0th}
There are natural isomorphisms of derived functors
\[ L_i \ellnu  \cong L_i \ell , \quad L_i \ellpernu  \cong L_i \ellper \]  
for $i\geq 1$. Furthermore for objects $A$ in $\Commalg$ one has natural isomorphisms of algebras
\[ L_0\ell  (A) \cong \ell (A), \quad  L_0 \ellper (A) \cong \ellper (A). \]
\end{proposition}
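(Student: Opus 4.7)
The plan is to exploit the fact that $w=\phi(1)+1$ is idempotent in $\ellnu(A)$, yielding an orthogonal splitting that separates $\ell$ from an easily controlled complementary functor. First I would verify the idempotency: relation (\ref{r5}) with $a=b=1$ together with $q(1)=0$ gives $\phi(1)^2=\phi(1)$, hence $w^2=w$, with complementary idempotent $1-w=\phi(1)$ (we are in characteristic $2$). This produces a natural direct sum $\ellnu(A) = w\ellnu(A) \oplus \phi(1)\ellnu(A)$ of functors on $\Commalg$, and the projection $\phi(1)\ellnu(A)\hookrightarrow \ellnu(A)\twoheadrightarrow \ell(A)$ is a natural isomorphism. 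Since left derived functors commute with direct sums of functors, one obtains a splitting $L_i\ellnu \cong L_i(w\ellnu)\oplus L_i\ell$ in every degree $i\geq 0$.

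The main technical step, and the one I expect to be the hardest, is to identify $w\ellnu$ with the constant functor $\underline{k[u]}$ of value $k[u]$. Substituting $b=1$ into (\ref{r5}), (\ref{r6}) and (\ref{r9}) gives $w\phi(a)=wq(a)=w\delta(a)=0$ for every $a\in A$, so $w$ annihilates the ideal $J=(\phi(a), q(a), \delta(a)\mid a\in A)\subset \ellnu(A)$. A case-by-case inspection shows that all defining relations of $\ellnu$ become trivial once $\phi(a), q(a), \delta(a)$ are set to zero, so $\ellnu(A)/J\cong k[u]$. Writing an arbitrary $r\in \ellnu(A)$ as $p(u)+j$ with $p\in k[u]$ and $j\in J$ yields $wr=wp(u)$, proving surjectivity of the map $\underline{k[u]}(A)\to w\ellnu(A)$, $u^n\mapsto wu^n$; injectivity follows because the projection $\ellnu(A)\twoheadrightarrow k[u]$ restricts to a one-sided inverse on $w\ellnu(A)$. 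Applying a constant functor to a simplicial resolution produces a constant simplicial object whose normalized chain complex is concentrated in degree zero, so $L_i(w\ellnu)=0$ for $i\geq 1$ and $L_0(w\ellnu)\cong w\ellnu$.

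Combining the splitting with Proposition \ref{prop:L_0ellnu} (which says $L_0\ellnu\cong\ellnu$) then gives $L_i\ell\cong L_i\ellnu$ for $i\geq 1$, while in degree zero the identity $\ellnu(A)=w\ellnu(A)\oplus L_0\ell(A)$ forces $L_0\ell(A)\cong\ell(A)$. This is a $k$-algebra isomorphism because the natural map $L_0\ell(A)\to \ell(A)$ induced by the augmentation of a simplicial resolution is a ring homomorphism (by the description of $L_0$ in the proof of Lemma \ref{0th}) and has just been identified with a bijection. The periodic case goes through verbatim, the only modification being that quotienting $\ellpernu(A)$ by $(\phi(a), q(a)\mid a\in A)$ leaves the generators $u, u^{-1}$ subject to $uu^{-1}=1$, so $w\ellpernu\cong \underline{k[u,u^{-1}]}$ and the constant functor argument carries over without change.
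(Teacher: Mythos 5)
Your proof is correct, and it reaches the conclusion by a somewhat different route than the paper. Both arguments rest on the same key computation: setting $b=1$ in relations (\ref{r5}), (\ref{r6}), (\ref{r9}) shows that $w=\phi(1)+1$ annihilates all the generators $\phi(a)$, $q(a)$, $\delta(a)$, so that $w\ellnu(A)=wk[u]$ is a constant functor of $A$ (and similarly $w\ellpernu(A)=wk[u,u^{-1}]$), whence its higher derived functors vanish and $L_0$ of it is itself. The difference lies in how this vanishing is converted into the statement. The paper feeds the short exact sequence $0\to w\ellnu\to\ellnu\to\ell\to 0$ into the long exact sequence of derived functors displayed just before the proposition; this gives the isomorphisms for $i\geq 2$ immediately, but the cases $i=1$ and $i=0$ require the additional step of identifying $L_0(w\ellnu)(A)\to L_0(\ellnu)(A)$ with the inclusion $wk[u]\hookrightarrow\ellnu(A)$ and checking injectivity. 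You instead observe that $w$ is idempotent (from $\phi(1)^2=\phi(1)$, using $q(1)=0$), so the sequence splits naturally: $\ellnu=w\ellnu\oplus\phi(1)\ellnu$ with $\phi(1)\ellnu\cong\ell$, and derived functors preserve direct sums of functors. This disposes of the connecting-map analysis in one stroke and treats all degrees uniformly, at the price of being special to this situation; the paper's long-exact-sequence device is the more standard one and would still function if the sequence did not split. Your handling of the ring structure on $L_0\ell(A)$ (the canonical map induced by the augmentation of the resolution is an algebra map and agrees with your bijection because the idempotent, hence the splitting, is natural) is also fine, and the periodic case does go through verbatim.
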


\begin{proof}
We prove the results for the functor $\ell$. The argument for $\ellper$ is similar.
If we multiply $w$ by one of the generators $\phi (a)$, $q(a)$ or $\delta (a)$ of $\ellnu (A)$ 
we get zero as mentioned in Remark \ref{remark:ell}. Thus the ideal $w\ellnu (A)$ has the form
$wk[u]$ which does not depend on $A$. 

Let $\epsilon: P\to A$ be a simplicial resolution of $A$. Then $w\ellnu (P)$ is a constant simplicial object
such that $L_i(w\ellnu )(A) = 0$ for $i\geq 1$ and $L_0(w\ellnu) (A) = wk[u]$. By the long exact sequence
of derived functors we get isomorphisms as stated for $i\geq 2$.

The kernel of the canonical projection $\ellnu (A) \to \ell (A)$ is the ideal $w\ellnu (A)=wk[u]$. 
One can identify the map 
\[ wk[u] \cong L_0(w \ellnu) (A) \to L_0 (\ellnu ) (A) \cong \ellnu (A) \]
as the inclusion of this kernel into the domain of the projection. So it is an injective map and the remaining 
results follows.
\end{proof}

\begin{proposition}
\label{prop:0th+}
The canonical map $L_0 \ellpos (A) \to \ellpos (A)$ is an isomorphism of $\ella$-modules 
for every object $A$ in $\Commalg$.
\end{proposition}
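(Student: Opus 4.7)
The plan is to mimic the two-step strategy used in Propositions \ref{prop:L_0ellnu} and \ref{prop:0th}, adapted to the module-valued setting. I proceed in three stages: first treat a non-unital version $\ellposnu$; next pass to the quotient $\widetilde{\ellpos} := \ellposnu/w\ellposnu$ with $w = \phi(1)+1$; finally pass to $\ellpos$.

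First, I would extend $\ellpos$ to a functor $\ellposnu : \Commalgnu \to \Mod$ by letting $\ellposnu(A)$ be the $\ellnu(A)$-module generated by $\gamma(a)$ for $a \in A$ and $v^i$ for $i \geq 0$, subject to the relations (\ref{rp1})--(\ref{rp8}); relation (\ref{rp9}) is dropped since it refers to the unit. Using a module-valued analog of Lemma \ref{0th}, one verifies that for each surjection $f : A \to B$ in $\Commalgnu$ with kernel $I$, the map $\ellposnu(f)$ is surjective and its kernel is the $\ellnu(A)$-submodule generated by $\{\gamma(a) : a \in I\}$ together with $\ellnu(I) \cdot v^i$ for $i \geq 0$, where $\ellnu(I) \subseteq \ellnu(A)$ is the ideal from Proposition \ref{prop:L_0ellnu}. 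The rewritings $\phi(x) - \phi(y) = \phi(x-y)$, $q(x) - q(y) = q(x-y) + \delta(x(x-y))$, and $\delta(x) - \delta(y) = \delta(x-y)$ used there carry over to the coefficients acting on the module generators. This yields $L_0\ellposnu(A) \cong \ellposnu(A)$ for $A \in \Commalg$.

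Second, the identities $\phi(1)\phi(a) = \phi(a)$ (from (\ref{r5}) combined with $q(1)=0$), $\phi(1)q(a) = q(a)$, $\phi(1)\gamma(a) = \gamma(a)$ (from (\ref{rp2})), and $\delta(a)v^i = 0$ (from (\ref{rp7})) together show that $w$ annihilates every monomial in $\ellposnu(A)$ that involves an element of $A$. Consequently $W(A) := w\ellposnu(A)$ is the $k$-submodule spanned by $\{wv^i : i \geq 0\} \cup \{u^j w v^0 : j \geq 1\}$, which is independent of $A$. Hence $W$ is a constant simplicial functor with $L_iW = 0$ for $i \geq 1$ and $L_0W = W$; the long exact sequence associated to $0 \to W \to \ellposnu \to \widetilde{\ellpos} \to 0$, combined with $L_0\ellposnu \cong \ellposnu$, then yields $L_0\widetilde{\ellpos} \cong \widetilde{\ellpos}$ canonically.

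Third, let $N(A) \subseteq \widetilde{\ellpos}(A)$ denote the $\ell(A)$-submodule generated by $\gamma(1) - v^0$, so that $\ellpos(A) = \widetilde{\ellpos}(A)/N(A)$. Since $\ell$ preserves surjections, so does $N$, and hence the canonical map $L_0 N \to N$ is surjective. The long exact sequence for $0 \to N \to \widetilde{\ellpos} \to \ellpos \to 0$, together with $L_0\widetilde{\ellpos} \cong \widetilde{\ellpos}$, then gives
\[ L_0\ellpos \;\cong\; L_0\widetilde{\ellpos} \big/ \image\bigl(L_0 N \to L_0\widetilde{\ellpos}\bigr) \;\cong\; \widetilde{\ellpos}/N \;=\; \ellpos, \]
because the composite $L_0 N \to L_0\widetilde{\ellpos} \cong \widetilde{\ellpos}$ factors as $L_0 N \twoheadrightarrow N \hookrightarrow \widetilde{\ellpos}$ and therefore has image exactly $N$. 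The main obstacle will be the first step: the module-valued analog of Lemma \ref{0th} demands a careful description of the kernel of $\ellposnu(f)$, separating the ring-level contribution of $\ellnu(I)$ acting on the $v^i$ from the module-level generators $\gamma(a)$ for $a \in I$, since the cited proof in \cite{BO_Topology} is written for algebra-valued endofunctors.
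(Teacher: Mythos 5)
Your architecture (a non-unital functor $\ellposnu$, then quotient by $w\ellposnu$, then impose $\gamma(1)=v^0$) is a genuinely different route from the paper's. The paper instead constructs an explicit inverse $r:\ellposa \to L_0\ellposa$ directly over an almost free resolution $P\to A$: it sets $r(v^i)=[v^i]$ and $r(\gamma(a))=[\gamma(a^\prime)]$ for a lift $a^\prime$ of $a$, checks independence of the lift using the exactness of the resolution and the additivity of $\gamma$, and then verifies that the relations (\ref{rp1})--(\ref{rp9}) are respected. The paper also spends part of the proof on something your plan omits entirely: the statement asserts an isomorphism of $\ella$-modules, so one must first equip $L_0\ellpos(A)$ with an $\ella$-action (by showing that the $\ell(P_0)$-action descends through $L_0\ell(A)\cong\ella$) and check that the canonical map is $\ella$-linear. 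Your long-exact-sequence bookkeeping is carried out purely in $k$-modules and never addresses this.

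The substantive gap is the one you flag yourself: the first step. Lemma \ref{0th} cannot be invoked for $\ellposnu$, even in a ``module-valued analog,'' because its second hypothesis fails outright: for a surjection $f:A\to B$ with kernel $I$, the map $\ellposnu(I)\to\ellposnu(A)$ hits every generator $v^i$, and these do not lie in $\Ker\ellposnu(f)$, so the sequence $\ellposnu(\Ker f)\to\ellposnu(A)\to\ellposnu(B)$ is not exact at the middle term. Your proposed description of $\Ker\ellposnu(f)$ (the submodule generated by the $\gamma(a)$ with $a\in I$ together with $\Ker\ellnu(f)\cdot\ellposnu(A)$) is the right statement, but converting it into $L_0\ellposnu\cong\ellposnu$ requires proving $\Ker\ellposnu(\epsilon)=\ellposnu(d_1)\bigl(\Ker\ellposnu(d_0)\bigr)$ directly, i.e.\ lifting each kernel generator $\gamma(c)$, $c\in\Ker\epsilon$, to some $\gamma(z)$ with $d_0z=0$ and $d_1z=c$, and using surjectivity of $\ellnu(d_1)$ to handle the module coefficients. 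That is precisely the lifting argument the paper's direct proof performs on $\ellpos$ itself, so the detour through $\ellposnu$ buys nothing and leaves the hard step unproved. (Steps two and three are fine modulo step one: the computation that $w$ annihilates $\gamma(a)$, $\phi(a)$, $q(a)$, $\delta(a)$ is correct, as is the five-term exact sequence argument for the final quotient.)
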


\begin{proof}
Pick an almost free simplicial resolution $P\to A$. By the description in Remark 
\ref{ellpos_functor} one sees that the action
\[ \ell (P_0) \otimes \ellpos (P_0)\to \ellpos (P_0) \to L_0\ellpos (A) \]
factors through a well-defined action $L_0\ell (A) \otimes L_0\ellpos (A) \to L_0 \ellpos (A)$. So we have an  
action $\ella \otimes L_0 \ellpos (A) \to L_0 \ellpos (A)$ by the proposition above.

Consider the composite $\ellpos (P_0) \to L_0 \ellpos (A) \to \ellpos (A)$ of the projection followed
by the canonical map. The composite equals $\ellpos (\epsilon )$ where $\epsilon : P_0 \to A$ is the map from the 
resolution. A small diagram chase, which uses the surjectivity of $\epsilon$, shows that the canonical map 
is $\ella$-linear.

We construct an inverse of the canonical map. 
Let $r: \ellposa \to L_0 \ellposa$ be the $\ell (A)$-linear map with $r(v^i) = [v^i]$ and
$r(\gamma (a)) = [\gamma (a^\prime )]$, where $a^\prime$ is a lift of $a$ ie. $\epsilon (a^\prime ) = a$.
We must check that $r$ is well-defined. By the additivity of $\gamma$ we see that it is independent of
the choice of lift: Assume that $a^{\prime \prime}$ also satisfies $\epsilon (a^{\prime \prime})= a$. 
Then $\epsilon (a^\prime -a^{\prime \prime}) = 0$. By the exactness 
of the resolution, there exists a $z\in R_1$ such that $d_0 z = 0$ and $d_1z = a^\prime -a^{\prime \prime}$.
Thus,
\[ \gamma (a^\prime )-\gamma (a^{\prime \prime}) = \gamma (a^\prime -a^{\prime \prime}) =
\gamma (d_1z) = \ellpos (d_1)(\gamma (z)) \]
and $\ellpos (d_0)(\gamma (z)) = \gamma (d_0z)=0$ such that 
$[\gamma (a^\prime )] = [\gamma (a^{\prime \prime})]$.

We must also verify that the relations (\ref{rp1}) - (\ref{rp9}) are respected. For this we use that the
relations hold in $\ellpos (R_0)$. Let $b^\prime$ be a lift of $b$. For relation (\ref{rp1}) we have 
$\epsilon (a^\prime +b^\prime) = a+b$ such that
\[ r(\gamma (a+b)-\gamma (a)-\gamma (b)) = 
[\gamma (a^\prime +b^\prime )-\gamma (a^\prime )-\gamma (b^\prime ) ] = 0. \]
For (\ref{rp2}) we note that $\epsilon ((a^\prime )^2b^\prime ) = a^2 b$ such that
\[ r(\phi (a) \gamma (b)-\gamma (a^2b))= \phi (a)[\gamma (b^\prime )]-[\gamma ( (a^\prime )^2b^\prime )] = 
[\phi (a^\prime )\gamma (b^\prime )-\gamma ( (a^\prime )^2b^\prime )] = 0.\]
The arguments for the remaining relations are similar. 
\end{proof}

\begin{theorem}
There are natural isomorphisms of functors from unital (graded) commutative $\FF_2$-algebras of finite type to 
unital graded commutative $\FF_2$-algebras
\[ \ell \cong L_0 (HC_*^-), \quad \ellper \cong L_0 (HC_*^{per} ) \]
and a natural isomorphism from unital (graded) commutative $\FF_2$-algebras of finite type to graded 
$\FF_2$-vector spaces
\[ \ellpos \cong L_0(HC_*).\]
The last isomorphism is $\ell (A)$-linear when evaluated at any object $A$ from the domain category.
\end{theorem}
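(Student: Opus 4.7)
The plan is to combine results already established, since the theorem follows by running the case $i = 0$ of Theorem \ref{thm:l-funktor_ss} through the identifications of Propositions \ref{prop:0th} and \ref{prop:0th+}. Concretely, for any $A$ in the domain category I would form the natural composite isomorphisms
\[
\ell(A) \xrightarrow{\cong} L_0(\ell)(A) \xrightarrow{\cong} L_0(HC_*^-)(A),
\]
and analogously $\ellper(A) \cong L_0(\ellper)(A) \cong L_0(HC_*^{per})(A)$ as well as $\ellpos(A) \cong L_0(\ellpos)(A) \cong L_0(HC_*)(A)$.

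First I would verify that these composites preserve the required structure. Propositions \ref{prop:0th} and \ref{prop:0th+} supply the left-hand isomorphism as an algebra isomorphism (for $\ell$ and $\ellper$) and as an $\ella$-module isomorphism (for $\ellpos$). The right-hand isomorphism, as constructed in the proof of Theorem \ref{thm:l-funktor_ss}, comes from picking an almost free resolution $R_\bullet \to A$ in which each $R_n$ is a finitely generated polynomial algebra, and applying the approximation maps $\psi$, $\psi^+$, $\psi^{per}$ degree-wise; these are isomorphisms on each $R_n$ by the approximation theorems of Sections \ref{sec:themap} and the next, since polynomial algebras are smooth, supplemented and of finite type. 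Because $\psi$ and $\psi^{per}$ are natural algebra homomorphisms and $\psi^+$ is natural and $\ell(R_n)$-linear, the induced maps on $\pi_0$ are respectively algebra isomorphisms and an $\ella$-linear isomorphism.

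For the last statement I would still check one compatibility: the $\ella$-action on $L_0(HC_*)(A)$, defined by transporting the $HC_*^-(A)$-action on $HC_*(A)$ from Section \ref{sec:CHT} across the algebra isomorphism $\ella \cong L_0(HC_*^-)(A)$, must agree under our composite with the $\ella$-module structure on $\ellpos(A)$. This follows because $\psi^+$ intertwines the two actions in each simplicial degree and both $L_0$-module structures are defined degree-wise, exactly as in the argument of Proposition \ref{prop:0th+}.

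The main obstacle, such as it is, is purely bookkeeping: tracking how the algebra- and module-structures transport through the simplicial construction and the identification of $L_0$. No new algebraic input is required beyond the approximation theorems, the $L_0$-computations of the preceding section, and Theorem \ref{thm:l-funktor_ss}.
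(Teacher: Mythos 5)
Your proposal is correct and follows exactly the paper's own (one-line) argument: the theorem is obtained by composing the identifications $\ell(A)\cong L_0(\ell)(A)$, $\ellper(A)\cong L_0(\ellper)(A)$, $\ellpos(A)\cong L_0(\ellpos)(A)$ from Propositions \ref{prop:0th} and \ref{prop:0th+} with the isomorphisms of derived functors in Theorem \ref{thm:l-funktor_ss}. The additional bookkeeping you describe about transporting the algebra and module structures is implicit in the paper but is the right thing to check.
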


\begin{proof}
This follows from Proposition \ref{prop:0th}, Proposition \ref{prop:0th+} and Theorem \ref{thm:l-funktor_ss}.
\end{proof}

\begin{corollary}
\label{cor:universal}
The approximations $\psi : \ell \to HC_*^-$, $\psi^{per} : \ellper \to HC_*^{per}$ and 
$\psi^+ : \ellpos \to HC_*$ are universal in the following sense: Given functors 
$F$, $F^{per}$, $F^+$ between the respective categories together with natural transformations 
\[ f: F \to HC_*^- ,\quad  f^{per} : F^{per} \to HC_*^{per} , \quad f^+ : F^+ \to HC_* \] 
which are isomorphisms when evaluated on polynomial algebras. Then there are natural transformations $e: \ell \to F$, 
$e^{per}: \ellper \to F^{per}$ and $e^+ : \ellpos \to F^+$ such 
that the following diagrams commute:
\[ \xymatrix@C=0.8 cm{
\ell \ar[rd]^-{\psi} \ar[d]_-{e} & & & \ellper \ar[rd]^-{\psi^{per}} \ar[d]_-{e^{per}} & & & \ellpos \ar[rd]^-{\psi^{+}} \ar[d]_-{e^{+}} & \\
F \ar[r]_-{f} & HC_*^- & & F^{per} \ar[r]_-{f^{per}} & HC_*^{per} & & F^{+} \ar[r]_-{f^{+}} & HC_*^{+}
}\]
\end{corollary}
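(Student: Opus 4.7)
The plan is to construct $e$, $e^{per}$ and $e^+$ by using almost free simplicial resolutions together with the identification of the three approximation functors as zeroth non-abelian derived functors of the target cyclic homology theories (Proposition \ref{prop:0th} and Proposition \ref{prop:0th+}). I describe the argument for $e$; the other two cases are parallel.

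Given $A$, choose (using \cite{Weibel} 8.8.3) an almost free simplicial resolution $P \to A$ in which each $P_n$ is a polynomial algebra on finitely many generators. Such $P_n$ are smooth, supplemented and of finite type, so by Theorem \ref{thm:lfunctorapprox} the component $\psi_{P_n}: \ell(P_n) \to HC_*^-(P_n)$ is an isomorphism for every $n$, and by hypothesis so is $f_{P_n}: F(P_n) \to HC_*^-(P_n)$. By naturality, $\psi_P$ and $f_P$ are maps of simplicial objects and $f_P$ is a levelwise isomorphism, so one can set $e_P := f_P^{-1} \circ \psi_P : \ell(P) \to F(P)$. Then $e_A$ is defined as the composition
\[
\ell(A) \xleftarrow{\cong} L_0\ell(A) = \pi_0 \ell(P) \xrightarrow{\pi_0 e_P} \pi_0 F(P) = L_0 F(A) \longrightarrow F(A),
\]
where the left isomorphism is Proposition \ref{prop:0th} and the right arrow is the canonical edge map.

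Well-definedness and naturality in $A$ follow from the homotopy uniqueness of almost free resolutions recalled in Section \ref{sec:spectral_sequences} (cf.\ diagram (\ref{llp})): two resolutions of $A$ are connected by a levelwise weak equivalence unique up to simplicial homotopy, and a morphism $A \to B$ lifts to a morphism of resolutions unique up to simplicial homotopy. Since $\ell$ and $F$ send simplicial homotopies to simplicial homotopies and $\pi_0$ is invariant under them, the assignment $A \mapsto e_A$ is independent of the resolution and natural in $A$. For the commutativity of the triangle, note that $f_P \circ e_P = \psi_P$ by construction; applying $\pi_0$ and combining with the naturality square of $\psi$, the identification $L_0\ell(A) \cong \ell(A)$, and the canonical map $L_0 HC_*^-(A) \to HC_*^-(A)$, one concludes $f_A \circ e_A = \psi_A$.

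The arguments for $\ellper \to F^{per}$ and $\ellpos \to F^+$ proceed identically, invoking Proposition \ref{prop:0th} for $\ellper$ and Proposition \ref{prop:0th+} for $\ellpos$ in place of the $\ell$ version, together with the approximation theorems of Section 5; in the $F^+$ case one tracks $\ell(A)$-linearity rather than algebra structure, but the same construction applies verbatim. The main obstacle I anticipate is ensuring the pointwise construction $P \mapsto f_P^{-1} \circ \psi_P$ really descends to a natural transformation of functors on $\Commalg$. This is not a new computation but rather the assertion that the simplicial model-categorical framework for non-abelian derived functors reviewed in Section \ref{sec:spectral_sequences} applies here; the only thing to verify carefully is that $\ell$, $\ellpos$ and $\ellper$ respect simplicial homotopies, which follows because they are defined by generators and relations and hence commute with the cotensor $(-) \otimes \Delta^1$ levelwise on generators.
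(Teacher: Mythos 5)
Your construction is correct and is essentially the argument the paper intends: the corollary is stated without proof as an immediate consequence of the identifications $\ell \cong L_0(HC_*^-)$, $\ellper \cong L_0(HC_*^{per})$, $\ellpos \cong L_0(HC_*)$, and your levelwise definition $e_P = f_P^{-1}\circ \psi_P$ on an almost free polynomial resolution, followed by $\pi_0$ and the edge maps, is exactly how that universal property is extracted. One cosmetic remark: your worry about $\ell$, $\ellpos$, $\ellper$ respecting simplicial homotopies needs no appeal to generators and relations or the cotensor, since (as noted in Section~\ref{sec:spectral_sequences}) \emph{any} functor carries a combinatorial simplicial homotopy to a combinatorial simplicial homotopy.
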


\begin{remark}
For a general commutative ring $k$ one can {\em define} approximation functors as follows:
\[ \ell = L_0 (HC_*^- ), \quad \ellper = L_0 (HC_*^{per} ), \quad \ellpos = L_0(HC_* ). \]
It could be interesting to have a presentations of these functors in terms of generators and relations as
we have when $k=\FF_2$. Such presentations could be viewed as Hochschild-Kostant-Rosenberg theorems
for the cyclic homology theories.
\end{remark}


\begin{thebibliography}{MMMM}

\bibitem[B]{Boardman} J.M. Boardman, Conditionally convergent spectral sequences, Homotopy invariant
algebraic structures (Baltimore, MD, 1998), Contemp. Math. 239, 49--84.

\bibitem[BO1]{BO} M. B\" okstedt, I. Ottosen, Homotopy orbits of free loop spaces, Fund. Math. 162 (1999), 251--257.

\bibitem[BO2]{BO_Topology} M. B\" okstedt, I. Ottosen, A spectral sequence for string cohomology, Topology 44 (2005), 1181--1212.

\bibitem[C]{C} P. Cartier, Une nouvelle opération sur les formes différentielles, 
C. R. Acad. Sci. Paris 244 (1957), 426–-428. 


\bibitem[GeJ]{Ge-J} E. Getzler, J.D.S. Jones, $A_\infty$-algebras and the cyclic bar complex, Illinois J. Math. 34 (1990), 
256-–283.

\bibitem[GJ]{G-J} P.G. Goerss, J.F. Jardine, Simplicial homotopy theory,
Progress in Mathematics, 174. Birkhäuser Verlag, Basel, 1999.

\bibitem[G]{Goodwillie} T.G. Goodwillie, Relative algebraic $K$-theory and cyclic homology, Ann. of Math. 124 (1986), 
347--402.

\bibitem[K]{Ka} N.M. Katz, Nilpotent connections and the monodromy theorem: Applications of a result of Turrittin. 
Inst. Hautes Études Sci. Publ. Math. No. 39 (1970), 175–-232. 

\bibitem[KR]{KR} M. Khalkhali, B. Rangipour, On the generalized cyclic Eilenberg-Zilber theorem,
Canad. Math. Bull. 47 (2004), 38–-48. 

\bibitem[L]{Loday} J.-L. Loday, Cyclic Homology, Grundlehren der Mathematischen Wissenschaften 301, Springer-Verlag, Berlin, 1998.

\bibitem[M]{Miller} H. Miller, The Sullivan conjecture on maps from classifying spaces, Ann. of Math. 120 (1984), 39–87. 

\bibitem[NEH]{NEH} B. Ndombol \& M. El Haouari, The free loop space equivariant cohomology algebra of some formal spaces, Math. Z. 266 (2010), 863--875. 

\bibitem[Q1]{Quillen1} D.G. Quillen, Homotopical Algebra, Lecture Notes in Mathematics No. 43, Springer-Verlag, 
Berlin-New York 1967. 

\bibitem[Q2]{Quillen2} D.G. Quillen, On the (co-) homology of commutative rings, Applications of Categorical Algebra (Proc. Sympos. Pure Math., Vol. XVII, New York, 1968), Amer. Math. Soc., Providence, R.I., 1970, 65--87.

\bibitem[W]{Weibel} C.A. Weibel, An introduction to homological algebra, Cambridge Studies in Advanced Mathematics 38, Cambridge University Press, Cambridge, 1994. 

\end{thebibliography}
\end{document}